\newtheorem{cor}{Corollary}[section]
\newtheorem{te}[cor]{Theorem}
\newtheorem{p}[cor]{Proposition}
\newtheorem{lemma}[cor]{Lemma}
\theoremstyle{definition}
\newtheorem{de}[cor]{Definition}
\theoremstyle{remark}
\newtheorem{ob}[cor]{Observation}
\newtheorem{nt}[cor]{Notation}
\newcommand{\cz}{\mathbb{C}}
\newcommand{\nz}{\mathbb{N}}
\newcommand{\zz}{\mathbb{Z}}
\newcommand{\qz}{\mathbb{Q}}
\newcommand{\ff}{\mathbb{F}}
\newcommand{\ee}{\mathcal{E}}
\newcommand{\bb}{\mathcal{B}}
\newcommand{\unit}{\mathcal{U}}
\newcommand{\vp}{\varepsilon}
\begin{document}

\title{Convex Structures Revisited}
\maketitle
\begin{center}
Liviu P\u aunescu\footnote{Work supported by a grant of the Romanian National Authority for Scientific Research, CNCS - UEFISCDI, project number PN-II-ID-PCE-2012-4-0201}
\end{center}

 $\mathbf{Abstract.}$ We provide a complete characterisation of extreme points of the space of sofic representations. We also show that the restriction map $Sof(G,P^\omega)$ to $Sof(H,P^\omega)$, where $H\subset G$ is not always surjective. The first part of the paper is a continuation of \cite{Pa2} and follows more closely the plan of Nathanial  Brown from \cite{Br}. 

\tableofcontents

\section{Introduction}

The goal of this article is to review results in \cite{Pa2} and to obtain a complete characterisation of extreme points, similar to the one in \cite{Br}. There were two obstacles preventing us from obtaining a complete characterisation. 

Firstly, amplifications were a big issue. We managed to prove a link between extreme points and ergodicity of the commutant largely due to Proposition 2.8. It proved that ergodicity is preserved under taking amplifications. In order to get an "if and only if" statement, one would need a converse to Proposition 2.8. Sadly this kind of questions proved to be rather hard to settle. Also, in \cite{Br}, there was no need to ask these questions, due to the diffuse nature of the hyperfinite factor, as opposed to our finite dimensional object $P_n$ (permutation matrices). This problem will be solved by considering "type $II_1$" permutations, that is the full group of an amenable type $II_1$ equivalence relation.

Secondly, it was hard to construct elements in the commutant of a sofic representation. Much to my surprise, there are sofic representations that act ergodically on the Loeb space (the sofic representation itself is ergodic, not its commutant). Such sofic representations are necessary extreme points and it seems that there are no tools to construct enough elements in the commutant to get ergodicity. This problem can only be solved by restricting the Loeb space to the commutant of the sofic representation.

Note however that the result in \cite{Pa2}, Theorem 2.10 is still useful in that form. Ergodicity of the communtant of a sofic representation is a question that appears in the study of sofic entropy, see \cite{Ke-Li}.

Throughout the article $\omega$ denotes, as usual, a free ultrafilter on $\nz$. We work with $M_n(\cz)$, the algebra of matrices in dimension $n$ and its special subsets $P_n$, the subgroup of permutation matrices and $D_n$, the subalgebra of diagonal matrices. This time we also need $R$, the unique hyperfinite type $II_1$ factor. We assume familiarity with ultraproducts of finite von Neumann algebras, there are many places in literature where the construction can be checked.

\subsection{The convex strucutre on $Hom(N,R^\omega)$}

Lets recall the construction from \cite{Br}. Let $N$ be a separable type $II_1$ factor. The following theorem is a fruitful result in the theory of type $II_1$ factors:

\begin{te}\label{mcduff-jung}
The factor $N$ is the hyperfinite factor if and only if any two unital homomorphism  $\pi,\rho:N\to R^\omega$ are unitary conjugate, i.e. 
there exists  $u\in\unit(R^\omega)$ such that $\pi(a)=u\rho(a)u^*$ for all $a\in N$.
\end{te}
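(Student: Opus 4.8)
The statement splits into two implications of very unequal difficulty, and the plan is to treat them separately. The ``only if'' direction (hyperfinite $\Rightarrow$ uniqueness up to conjugacy) is the accessible half; the converse is the deep one, and it is where I expect essentially all the work to lie.

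For the forward direction I would exploit that $R$ is the $\norm{\cdot}_2$-closure of an increasing union of matrix subalgebras $M_{k_1}\subset M_{k_2}\subset\cdots$. The core local fact is that any two unital copies of a fixed $M_k(\cz)$ inside a $II_1$ factor, carrying the same normalised trace, are unitarily conjugate, and moreover that if their systems of matrix units are $\norm{\cdot}_2$-close then the conjugating unitary can be chosen close to $1$. Applying this to $\pi|_{M_{k_n}}$ and $\rho|_{M_{k_n}}$ produces, for each $n$, a unitary $u_n\in\unit(R^\omega)$ with $u_n\pi(a)u_n^*=\rho(a)$ for all $a\in M_{k_n}$. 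These local unitaries need not be compatible, so the remaining step is a reindexing argument across $\omega$: represent each $u_n$ by a sequence in $R$ and use the flexibility of the ultrapower to extract a single $u\in\unit(R^\omega)$ that simultaneously intertwines $\pi$ and $\rho$ on every $M_{k_n}$, hence on all of $R$ by density. This is the standard ``approximate intertwining, then pass to the ultralimit'' manoeuvre.

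For the converse I would argue in the contrapositive spirit: assuming every pair of unital homomorphisms $N\to R^\omega$ is unitarily conjugate, I want to manufacture a hyperfinite exhaustion of $N$. The idea is to play two embeddings off against each other. Fix one embedding $\pi$ and compare it with embeddings built from finite-dimensional matricial approximations of $N$; the rigidity coming from uniqueness up to conjugacy should force these matricial models to be compatible enough that their union approximates $N$ in $\norm{\cdot}_2$, which is precisely hyperfiniteness. Concretely one would extract, for each finite $F\subset N$ and each $\ve>0$, a finite-dimensional subalgebra that $\ve$-contains $F$, and then glue these into an increasing tower using the uniqueness hypothesis to reconcile successive approximants.

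The main obstacle is exactly this converse: converting a global uniqueness statement into local finite-dimensional approximations. This is the content of Jung's theorem, and the genuine difficulty is that unitary conjugacy in $R^\omega$ is an a priori ``soft'' condition about the whole of $N$, whereas hyperfiniteness is a ``hard'' quantitative approximation property; bridging the two requires controlling the matricial microstate spaces and showing that uniqueness up to conjugacy makes these spaces connected enough to produce commuting finite-dimensional approximants. I would therefore expect to invoke the microstate machinery rather than reprove it from scratch, and I would flag this step as the crux of the whole argument.
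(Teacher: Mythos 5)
The paper does not prove this theorem at all: it is stated as background, with the forward direction dismissed as ``classic'' and the converse attributed to Jung via the citation \cite{Ju}, so there is no in-paper argument to deviate from. Your proposal is consistent with this treatment and is correct in outline --- the forward half is the standard argument (unitary conjugacy of trace-compatible matrix subalgebras with $\norm{\cdot}_2$-control, applied along a hyperfinite exhaustion, followed by a reindexing/saturation argument in the ultrapower to merge the local intertwiners $u_n$ into one $u\in\unit(R^\omega)$), and for the converse you correctly identify that your ``glue finite-dimensional approximants into a tower'' sketch is not itself a proof but precisely the content of Jung's tubularity/microstate theorem, which you, like the paper, invoke rather than reprove.
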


The direct implication is classic, while the converse is a much recent result due to Jung, \cite{Ju}. The question is now what happens outside of the hyperfinite world?
One can always consider the set:
\[Hom(N,R^\omega)=\{\pi:N\to R^\omega:\mbox{unital homeomorphism}\}/\sim,\]
where $\sim$ is unitary conjugacy defined as in the statement of Theorem \ref{mcduff-jung}.

This space has a natural topology given by pointwise convergence in the weak topology of $R^\omega$. Due to the separability of $N$ this turns out to be a metrizable topology. 
Ozawa showed in the appendix of \cite{Br} that for non-hyperfinite $N$ the space $Hom(N,R^\omega)$ is non-separable (given that $N$ satisfies Connes' Embedding Conjecture, 
i.e. $Hom(N,R^\omega)$ is non-empty). This is quite an unpleasant fact, being just another example where the hyperfinite case is completly separated from the rest of the world. 

Still, if you want to study the set $Hom(N,R^\omega)$, the first observation is that the direct sum operation is constructing new elements out of old ones. Let  $\pi,\rho:N\to R^\omega$
and consider the direct sum:
\[\pi\oplus\rho:N\to R^\omega\oplus R^\omega=(R\oplus R)^\omega.\]
Choose a unital embedding $\theta:R\oplus R\to R$ so that $\theta^\omega:(R\oplus R)^\omega\to R^\omega$. Then:
\[\theta^\omega\circ(\pi\oplus\rho):N\to R^\omega\]
is a unital embedding whose class in $Hom(N,R^\omega)$ can be different than the classes of $\pi$ and $\rho$.

The map $\theta:R\oplus R\to R$ is unital so $\theta(1\oplus 1)=1$. Notice that $\theta(1\oplus 0)$ is a projection in $R$. Let $\lambda=Tr(\theta(1\oplus 0))$. Then $Tr(\theta(0\oplus 1))$ must equal $1-\lambda$. Denote by $[\xi]$ the class in $Hom(N,R^\omega)$ of a unital morphism $\xi:N\to R^\omega$. We set by definition:
\[[\theta^\omega\circ(\pi\oplus\rho)]=\lambda[\pi]+(1-\lambda)[\rho].\]
The term $\lambda[\pi]+(1-\lambda)[\rho]$ is just a formal writing for the element $[\theta^\omega\circ(\pi\oplus\rho)]$ that we constructed. Of course there are some 
well-defined problems to be solved, but nothing more than routine work. The last observation to be made is that one can construct a map $\theta$ for any prescribed $\lambda\in [0,1]$.

It can be checked that if $[\pi]\neq[\rho]$ and $\lambda\neq 1$ then  $\lambda[\pi]+(1-\lambda)[\rho]\neq[\pi]$ as expected. Furhermore there are some axioms
involving also a metric for the topology on $Hom(N,R^\omega)$ that have to be settled. After this axioms are checked one can deduce due to a result by Capraro and Fritz (\cite{Ca-Fr}) that $Hom(N,R^\omega)$
together with its metric and convex structures can be regarded as an honest closed convex subset of a Banach space.

Now we can state the main result of Brown's theory.

\begin{te}
The class $[\pi]\in Hom(N,R^\omega)$ is an extreme point if and only if the relative comutant $N'\cap R^\omega$ is a factor.
\end{te}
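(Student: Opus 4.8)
The plan is to translate extremality of $[\pi]$ into a statement about projections in the relative commutant $B:=\pi(N)'\cap R^\omega$, and then treat the two implications separately. For a projection $p\in B$, write $\pi_p\colon N\to pR^\omega p$ for the cut-down $a\mapsto\pi(a)p$; since corners of $R^\omega$ are again isomorphic to $R^\omega$, the class $[\pi_p]\in Hom(N,R^\omega)$ is well defined and one has the decomposition $[\pi]=Tr(p)\,[\pi_p]+(1-Tr(p))\,[\pi_{1-p}]$. First I would establish the converse bookkeeping: if $[\pi]=\lambda[\rho]+(1-\lambda)[\sigma]$, then writing this class as $[\theta^\omega\circ(\rho\oplus\sigma)]$ and unitarily conjugating back to $\pi$, the image of the central projection $\theta^\omega(1\oplus 0)$ lands in $B$ and realises $[\rho]=[\pi_p]$, $[\sigma]=[\pi_{1-p}]$ with $Tr(p)=\lambda$. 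Thus every decomposition of $[\pi]$ arises from a projection of $B$, and the problem reduces to showing that $[\pi]$ is extreme if and only if $[\pi_p]=[\pi_{1-p}]$ for every projection $p\in B$.

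The engine of the whole argument is a compression lemma that I would prove first: \emph{if $B$ is a factor, then $[\pi_p]=[\pi]$ for every projection $p\in B$}. When $Tr(p)=\tfrac12$ this is immediate, since in the factor $B$ one has $p\sim 1-p$ via a partial isometry $v\in B$, which intertwines $\pi_p$ with $\pi_{1-p}$ inside $R^\omega$; hence $[\pi_p]=[\pi_{1-p}]$ and $[\pi]=\tfrac12[\pi_p]+\tfrac12[\pi_{1-p}]=[\pi_p]$. Iterating this inside successive corners gives $[\pi_p]=[\pi]$ whenever $Tr(p)$ is a dyadic rational, and the general case follows from the metric convexity of $Hom(N,R^\omega)$ furnished by the Capraro--Fritz embedding, by approximating $p$ from below by a dyadic subprojection. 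Granting the lemma, the implication ``$B$ a factor $\Rightarrow[\pi]$ extreme'' is immediate: $[\pi_p]=[\pi]=[\pi_{1-p}]$ for all $p$, so no decomposition is proper.

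For the reverse implication I would start from a nontrivial central projection of $B$ and manufacture a proper decomposition, guided by the reduction-theory fact that $z\in B$ is central if and only if $\pi_z$ and $\pi_{1-z}$ are disjoint subrepresentations of $R^\omega$. The step I expect to be the main obstacle is that equality of classes $[\pi_z]=[\pi_{1-z}]$ only supplies an \emph{abstract} isomorphism of the corners $zR^\omega z\cong(1-z)R^\omega(1-z)$, which is spatially implemented inside $R^\omega$ only when $Tr(z)=\tfrac12$; for $Tr(z)\neq\tfrac12$ there is no immediate clash with disjointness. I would resolve this by equalising traces before invoking spatiality. If $Z(B)$ is diffuse it contains a central projection of trace $\tfrac12$, and the equal-trace case contradicts disjointness at once. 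Otherwise $Z(B)$ has a minimal projection $z$, so $B_1:=zBz$ is a factor; supposing $[\pi_z]=[\pi_{1-z}]$, the implementing corner isomorphism carries $B_1$ onto $B_2:=(1-z)B(1-z)$, forcing $B_2$ to be a factor as well. The compression lemma then applies to $\pi_{1-z}$: I compress the larger corner to a subprojection of the same trace as the smaller one, preserving its class, and equal traces now yield a partial isometry $w\in R^\omega$ intertwining $\pi_z$ and $\pi_{1-z}$ nontrivially, contradicting disjointness. Hence $[\pi_z]\neq[\pi_{1-z}]$, this decomposition is proper, and $[\pi]$ is not extreme.
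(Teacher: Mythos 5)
Your overall architecture is the right one, and it matches the actual proof of this statement: the paper itself only quotes the theorem from \cite{Br} without proof (its Section 4 proves the sofic analogue), and Brown's argument indeed runs through cut-downs $\pi_p$ by projections $p\in B=\pi(N)'\cap R^\omega$, the bookkeeping that every convex decomposition of $[\pi]$ has the form $Tr(p)[\pi_p]+(1-Tr(p))[\pi_{1-p}]$, and a partition argument in the factor case. Your forward direction is essentially sound: in a factor $B$ equal-trace projections are conjugate by a unitary of $B$, which \emph{does} spatially intertwine the cut-downs (that is the easy direction), and the dyadic-partition-plus-metric-continuity step is fine granted the Capraro--Fritz boundedness and the standard fact, which you use silently throughout, that $B$ is type $II_1$ and hence contains projections of every trace (you need this for the dyadic partitions and again when choosing $q\leqslant 1-z$ with $Tr(q)=Tr(z)$; the sofic counterpart is Lemma \ref{diffuse abelian}).

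The genuine gap is the converse spatiality claim on which your whole reverse direction rests: that equality of classes at \emph{equal} traces is witnessed by a partial isometry in $R^\omega$. Unwinding $[\pi_z]=[\pi_q]$ only gives an abstract trace-preserving isomorphism $\gamma:zR^\omega z\to qR^\omega q$ with $\gamma(\pi(a)z)=\pi(a)q$. Equal traces do supply a partial isometry $v\in R^\omega$ with $v^*v=z$, $vv^*=q$ (as $R^\omega$ is a factor), but $\gamma\circ \mathrm{Ad}(v^*)$ is then an automorphism of $qR^\omega q\cong R^\omega$, and such an automorphism need not be inner, nor even implementable by a unitary on the relevant separable subalgebra; so ``equal trace $\Rightarrow$ spatially implemented'' is false as stated, and your phrase ``the equal-trace case contradicts disjointness at once'' hides the hardest step of the theorem. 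What rescues it --- and this is exactly the content of Brown's Proposition 3.3.4, whose sofic analogue is Proposition \ref{P3.3.4} of this paper --- is that the identifications $pR^\omega p\cong R^\omega$ defining the cut-down classes must be taken of ultraproduct (``standard'') form: then the discrepancy automorphism is of the form $\Pi_{k\to\omega}\theta_k$ with each $\theta_k\in Aut(R)$ approximately inner, and a diagonal argument implements it by a unitary of $R^\omega$ on the separable algebra $\pi(N)q$, yielding the intertwiner $w\in(1-z)Bz$ you need. Note that the well-definedness of $[\pi_p]$, which you also wave through (``since corners of $R^\omega$ are again isomorphic to $R^\omega$''), depends on the same point. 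With that lemma supplied or cited from \cite{Br}, your argument closes, and it is then a mild variant of Brown's finish: he contradicts centrality of $z$ by conjugating it to a distinct equal-trace projection by a unitary of the commutant, while you equalise traces via the compression lemma and contradict disjointness --- the same engine, differently packaged.
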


This is a nice result describing the extreme points of this convex structure, but their existence is still an open problem.

\subsection{The space $Sof(G,P^\omega)$}

In \cite{Pa2}, we replaced the separable factor $N$ and Connes' Embedding Conjecture by a countable group $G$ and by the sofic property respectively. We review here the construction of $Sof(G,P^\omega)$ from the article but, though this paper is pretty much self-contained, I'm assuming some familiarity with notations and results from \cite{Pa2}. 

We want to study embeddings of the group $G$ into the universal sofic group $\Pi_{k\to\omega}P_{n_k}$ that are "trace preserving", i.e. the trace of each nontrivial element is $0$. We call such morphisms \emph{sofic representations} of $G$. We first note that we have a similar result to Theorem \ref{mcduff-jung} due to Elek and Szabo, \cite{El-Sz2}:

\begin{te}\label{elek-szabo}
The group $G$ is amenable if and only if for any two group morphisms $\Theta_1,\Theta_2:G\to \Pi_{k\to\omega}P_{n_k}$ such that $Tr(\Theta_i(g))=0$ for any $g\neq e$ and any $i=1,2$, 
there exists $p\in\Pi_{k\to\omega}P_{n_k}$ such that $\Theta_2(g)=p\Theta_1(g)p^*$ for any $g\in G$.
\end{te}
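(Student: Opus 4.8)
The plan is to prove the two implications separately, mirroring the two halves of Theorem \ref{mcduff-jung}: the direct implication is the ``McDuff-type'' statement (amenability forces uniqueness of the sofic representation up to conjugacy), while the converse is the analogue of Jung's theorem and will be the genuinely hard part.

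For the direct implication, suppose $G$ is amenable and fix two trace-preserving morphisms $\Theta_1,\Theta_2:G\to\Pi_{k\to\omega}P_{n_k}$. I would lift them to sequences $\sigma^1_k,\sigma^2_k:G\to P_{n_k}$ which are, along $\omega$, asymptotic homomorphisms (almost multiplicative in the Hamming length) and asymptotically free, the latter being exactly the content of $Tr(\Theta_i(g))=0$ for $g\neq e$. Fixing a finite set $F\subset G$ and $\vp>0$, I would apply the Ornstein--Weiss quasi-tiling theorem for the amenable group $G$: for $k$ large along $\omega$ the almost-action $\sigma^i_k$ on $\{1,\dots,n_k\}$ tiles a fraction $\geq 1-\vp$ of the points by left-translation copies of finitely many F{\o}lner sets $T_1,\dots,T_m$. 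Because both almost-actions are free and live on sets of the same cardinality, after passing to a common quasi-tiling one may assume they are tiled by the same data with the same proportions, so that a bijection $p_k$ of $\{1,\dots,n_k\}$ carrying the tiles of $\sigma^1_k$ to those of $\sigma^2_k$ and respecting the translation structure makes the Hamming distance between $p_k\sigma^1_k(g)p_k^{-1}$ and $\sigma^2_k(g)$ at most $\vp$ for every $g\in F$. A diagonal argument over an exhaustion $F_j\uparrow G$ with $\vp_j\downarrow 0$ then produces $p=\Pi_{k\to\omega}p_k$ with $\Theta_2(g)=p\,\Theta_1(g)\,p^*$ for all $g$.

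For the converse I would argue by contraposition and exhibit, for non-amenable $G$, two trace-preserving sofic representations that are not conjugate. The invariant I propose to use is the relative commutant inside the ambient $II_1$ factor, or more concretely the lattice of $\Theta(G)$-invariant projections in the Loeb algebra, which is manifestly preserved by conjugation by any $p$. On one side take the ``block'' representation $\Theta_{\mathrm{bl}}$ obtained by splitting $\{1,\dots,n_k\}$ into two halves and letting $G$ act by a fixed sofic approximation on each; this is trace-preserving, and the indicator of one half is a $\Theta_{\mathrm{bl}}$-invariant projection of trace $\tfrac12$, so $\Theta_{\mathrm{bl}}$ is far from ergodic. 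On the other side I want an \emph{ergodic} sofic representation $\Theta_{\mathrm{erg}}$, one whose finite models admit no almost-invariant subsets of intermediate density, equivalently with trivial relative commutant; the two cannot be conjugate, since conjugation would have to match their invariant projections.

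The main obstacle is the construction of $\Theta_{\mathrm{erg}}$, and this is where non-amenability must enter essentially. For an amenable group the quasi-tiling argument above shows that every sofic representation carries F{\o}lner subsets of any prescribed density and hence always has nontrivial invariant projections, so no ergodic representation can exist; producing one therefore genuinely characterises non-amenability. I would obtain $\Theta_{\mathrm{erg}}$ from a sofic approximation whose Schreier-type graphs are expanders -- that is, have a spectral gap and thus no almost-invariant sets -- using that non-amenability (through a paradoxical decomposition, or through a random choice of the approximating permutations) is precisely what allows the almost-invariant vectors to be destroyed. Controlling this construction so that it remains a bona fide trace-preserving sofic representation, and verifying that the spectral gap survives the passage to the ultraproduct, is the delicate point on which the whole converse rests.
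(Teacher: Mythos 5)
First, a point of reference: the paper does not prove this statement at all --- Theorem \ref{elek-szabo} is quoted from Elek--Szabo \cite{El-Sz2} as a known result, so your attempt can only be measured against that source, not against an in-paper argument. Your direct implication is, in outline, the standard route: lift to almost actions, observe that $Tr(\Theta_i(g))=0$ for $g\neq e$ is exactly asymptotic freeness (the trace of a permutation matrix is its fixed-point proportion), and use Ornstein--Weiss quasi-tilings to match the two finite models tile by tile. The remaining gaps there are of bookkeeping type: the quasi-tiles are only $\vp$-disjoint rather than disjoint, and the two models need not a priori use the F{\o}lner shapes $T_1,\dots,T_m$ in the same proportions, so you must equalize tile counts (using that both cover a $(1-\vp)$-fraction of the same number of points) before the bijection $p_k$ can be defined. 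These are fixable and this half is acceptable as a sketch.

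The converse, however, contains a genuine gap which your last paragraph names but does not close, and it is precisely the step carrying the whole implication. You need, for \emph{every} non-amenable sofic $G$, a representation $\Theta_{\mathrm{erg}}$ with no nontrivial invariant projection in the Loeb algebra, i.e. with $\Theta'\cap(\Pi_{k\to\omega}D_{n_k})$ trivial. This is known for free groups via Friedman's theorem \cite{Fr} on random permutations --- exactly the expander input this paper uses in Section 5 for $\ff_2$ --- but no such construction is available for an arbitrary non-amenable sofic group, and ``non-amenability is precisely what allows the almost-invariant vectors to be destroyed'' is a hope, not a mechanism. Non-amenability does provide strongly ergodic actions (e.g.\ Bernoulli shifts), but a sofic approximation that locally converges to a strongly ergodic action can still be assembled out of disjoint blocks, and then it has exactly invariant sets of every density; so ergodicity of a target action gives no control over almost-invariant sets of the finite models, and a spectral gap must be engineered at the level of the permutations themselves, which no general tool achieves outside the free/random setting. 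Note that this very paper records as open the related (weaker-flavoured) existence question for extreme points, i.e.\ ergodicity of $\gamma(\Theta)$, for general sofic groups --- so the property you need is certainly not citable as a black box. Two smaller remarks: the stated equivalence is vacuous unless $G$ is sofic (for a non-sofic non-amenable $G$ there are no such morphisms and the right-hand side holds trivially), so your contraposition must read ``non-amenable \emph{sofic}''; and you need far less than full Loeb-space ergodicity --- any conjugacy invariant provably taking different values on two representations would do --- so if $\Theta_{\mathrm{erg}}$ is out of reach, the converse should be rerouted through a softer distinguishing invariant rather than through the hardest possible construction.
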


In order to be able to construct a convex structure on the set of sofic representations, we have to be flexible, considering universal sofic groups over any sequence of dimensions $\{n_k\}_k$ such that $n_k\to\infty$. This brings in certain complications, as we want to compare sofic representations over different universal sofic groups $\Pi_{k\to\omega}P_{n_k}$ and $\Pi_{k\to\omega}P_{m_k}$.

We notice that for a sequence $\{r_k\}_k\subset\nz^*$ the universal sofic group $\Pi_{k\to\omega}P_{n_k}$ canonically embeds in $\Pi_{k\to\omega}P_{n_kr_k}$ by tensoring by identity:
\[\Pi_{k\to\omega}P_{n_k}\ni\Pi_{k\to\omega}p_k\to\Pi_{k\to\omega}(p_k\otimes 1_{r_k})\in\Pi_{k\to\omega}P_{n_kr_k}.\]

Let $\Theta:G\to\Pi_{k\to\omega}P_{n_k}$ be a sofic approximation, $\Theta=\Pi_{k\to\omega}\theta_k$. We call an \emph{amplification} of $\Theta$ the composition of $\Theta$ with the above canonic map:
\[\Theta\otimes 1_{r_k}:G\to\Pi_{k\to\omega}P_{n_kr_k}\ \ \Theta\otimes 1_{r_k}(g)=\Pi_{k\to\omega}\theta_k(g)\otimes 1_{r_k}.\]

The space of sofic representations is now:
\[Sof(G,P^\omega)=\{\Theta:G\to\Pi_{k\to\omega}P_{n_k}:\mbox{sofic representation } (n_k)_k\subset\nz,n_k\to\infty\}/\sim,\]
where $\sim$ is amplifications and conjugacy as in Theorem \ref{elek-szabo}. So two sofic representations are equivalent if they have amplifications that are conjugate. For $\Theta:G\to\Pi_{k\to\omega}P_{n_k}$ we denote by $[\Theta]_P$ its class in $Sof(G,P^\omega)$, to distinguish it from $[\Theta]_\ee$ that we shall construct.

\section{Type $II_1$ permutations}

\subsection{The $I_n$ case} One way of discussing sofic objects is by starting with a probability measure preserving equivalence relation. Usually one takes the full equivalence relation on a space with $n$ elements, endowed with the normalised cardinal measure. The Feldman-Moore construction of this equivalence relation is the Cartain pair $D_n\subset M_n(\cz)$, where $D_n$ is the subalgebra of diagonal matrices. The full group of this type $I_n$ equivalence relation is $Sym(n)$, the symmetric group. It embeds in $M_n$ as $P_n$, the subgroup of permutation matrices. If $x\in Sym(n)$ then the corresponding element in $P_n$ is $\tilde x(i,j)=\delta_i^{p(j)}$ or $\tilde x$ is the characteristic function of the graph of $p^{-1}$.

The group $P_n$ is in the normaliser of $D_n$, i.e. if $p\in P_n$ and $a\in D_n$ then $pap^*\in D_n$.  What we have here is just a symmetric group, $P_n$, acting on a set with $n$ elements in an obvious way. Passing to ultraproducts things become more interesting:
\[\Pi_{k\to\omega}P_{n_k}\curvearrowright\Pi_{k\to\omega}D_{n_k}.\]
The group $\Pi_{k\to\omega}P_{n_k}$ was introduced by Elek and Sazbo (\cite{El-Sz1}) and it is called \emph{the universal sofic group}. A countable group is \emph{sofic} if and only if it is a subgroup of this group.

The algebra $\Pi_{k\to\omega}D_{n_k}$ is an abelian von Neumann algebra, isomorphic to $L^\infty(X_\omega,\mu_\omega)$, where $(X_\omega,\mu_\omega)$ is a Loeb measure space, i.e. an ultra product of probability spaces (for its construction see \cite{Lo} or \cite{El-Sze}).

I call the action itself $\Pi_{k\to\omega}P_{n_k}\curvearrowright\Pi_{k\to\omega}D_{n_k}$ \emph{the universal sofic action}. By definition, a standard action is \emph{sofic} if it can be embedded into a universal sofic action.

\subsection{The hyperfinite case} Instead of the space $\{1,\ldots,n\}$ with the normalised cardinal measure, we consider the unit interval $[0,1]$ endowed with the Lebesgue measure $\mu$. On this space consider the equivalence relation:
\[E=\{(x,y):x-y\in\qz\}\]
It is a standard fact that $E$ is a measurable, countable, $\mu$-preserving, ergodic, amenable equivalence relation. A consequence of the famous Connes-Feldman-Weiss theorem (\cite{CFW}) is the unicity of such an object: there is a unique ergodic, amenable type $II_1$ equivalence relation corresponding to the unique hyperfinite type $II_1$ factor.

We denote by $[E]$ \emph{the full group} of $E$:
\[[E]=\{u:[0,1]\to[0,1]:u\mbox{ bijection }, (x,u(x))\in E\mbox{ for $\mu$-almost every $x$}\}.\]
We still have a Hamming distance on $[E]$ defined by $d_H(u,v)=\mu(\{x:u(x)\neq v(x)\})$. The Feldman-Moore construction of $E$, by definition, consists of some functions from $E$ to $\cz$ (we don't go here into details, they are not so important):
\[M(E)=\{f:E\to\cz: f\mbox{ is a multiplier}\}.\] 
Operations on this algebra are defined as follows:
\begin{align*}
f\cdot g(x,z)=&\sum_{yEx}f(x,y)g(y,z);\\
f^*(x,y)=&\overline{f(y,x)}\\
Tr(f)=&\int_Xf(x,x)d\mu(x).
\end{align*}

The Cartain subalgebra of $M(E)$ is composed of those functions with the support on the diagonal:
\[A=\{f:E\to\cz: f\in M(E)\mbox{ and }f(x,y)=0 \mbox{ if } x\neq y\}.\]

It is a standard fact that $A$ is isomorphic to $L^\infty(X,\mu)$ and $M(E)$ is isomorphic to $R$, the hyperfinite type $II_1$ factor.

The full group $[E]$ can be embedded in $M(E)$ and we denote its image by $\ee$ (in the type $I_n$ the full group was $Sym(n)$ and it's image in the Feldman-Moore construction was $P_n$). The group $\ee$ is composed of those functions $f:E\to\cz$ that have exactly one entry of $1$ on each row and column:
\[\ee=\{f\in M(E):f(E)=\{0,1\}\mbox{ and }\forall x\exists! y f(x,y)=1\mbox{ and }\forall y\exists! x f(x,y)=1\}.\]

If $u\in [E]$ denote by $\tilde u=\chi_{graph(u^{-1})}\in\ee$. The formula $d_H(u,v)=1-Tr(\tilde u\tilde v^*)$ can be easily checked. Now we have $\ee$ acting on $A$ inside $M(E)$, to replace the old picture of $P_n$ action on $D_n$ inside $M_n$. Passing to ultraproducts we get the diffuse universal sofic picture:
\[\ee^\omega\curvearrowright A^\omega.\]
This is still a universal sofic action and $\ee^\omega$ is a universal sofic group. The benefit of this objects is that we no longer need amplifications in order to compare two sofic representations.

\subsection{The limit of symmetric groups}

As we said, the group $\ee$ is the type $II_1$ analogue of $P_n$ and the next result will strengthen this idea.

Let $n,r$ be two natural numbers. Then $f_{n,r}:M_n\to M_{nr}$ defined by $f_{n,r}(x)=x\otimes 1_r$ is a trace preserving embedding. More over $f_{n,r}(P_n)\subset P_{nr}$ and $f_{n,r}(D_n)\subset D_{nr}$.

As $d_H(p,q)=1-Tr(pq^*)$, the map $f_{n,r}$ restricted to $P_n$ preserves the Hamming distance. We construct the direct limit of metric groups $\varinjlim (P_n,d_H)$ by taking the metric closure in the Hamming distance of the algebraic limit of the directed system $(P_n,f_{n,r})$. The same construction is available for $(D_n,Tr)$.

\begin{te}\label{directlimit}
The direct limit $\varinjlim (P_n,d_H)$ is isomorphic to $[E]$. 
\end{te}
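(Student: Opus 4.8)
The plan is to construct an explicit isomorphism between the two metric groups by exhibiting compatible embeddings of each $(P_n, d_H)$ into $([E], d_H)$ that intertwine the connecting maps $f_{n,r}$, and then argue that the resulting map identifies the metric completion of the direct system with $[E]$. The key observation making this work is that both sides are full groups of amenable, measure-preserving equivalence relations, with $[E]$ being the \emph{unique} ergodic amenable type $II_1$ such object by the Connes--Feldman--Weiss theorem. So the strategy is to realize each finite symmetric group $P_n = Sym(n)$ as a full group of a \emph{sub}-equivalence relation of $E$ living on $[0,1]$, in a way that respects amplification.

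\emph{Step 1 (Compatible embeddings).} For each $n$, I would partition $[0,1]$ into $n$ half-open intervals $I_1^{(n)}, \dots, I_n^{(n)}$ of equal length $1/n$, and embed $Sym(n)$ into $[E]$ by sending a permutation $\sigma$ to the bijection $u_\sigma$ that maps $I_j^{(n)}$ affinely onto $I_{\sigma(j)}^{(n)}$ (translation by a rational, so $(x,u_\sigma(x)) \in E$ for a.e.\ $x$). The crucial compatibility is that the amplification map $f_{n,r}$, which sends a permutation $\sigma \in Sym(n)$ to $\sigma \otimes 1_r \in Sym(nr)$, corresponds under these embeddings to \emph{refining} the partition into $I_j^{(n)}$ into $r$ equal subintervals each: the permutation $\sigma \otimes 1_r$ permutes the $nr$ subintervals exactly as $\sigma$ permutes the blocks, so $u_{\sigma \otimes 1_r} = u_\sigma$ as elements of $[E]$. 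Thus the embeddings commute with the $f_{n,r}$ and descend to a single isometric embedding $\Phi$ of the algebraic direct limit into $[E]$.

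\emph{Step 2 (Isometry and density).} The formula $d_H(u,v) = 1 - Tr(\tilde u \tilde v^*)$ shows $\Phi$ preserves the Hamming distance, hence extends to an isometric, and clearly multiplicative, embedding of the metric completion $\varinjlim(P_n, d_H)$ into $[E]$. Surjectivity onto a \emph{dense} subgroup is the heart of the matter: I would show that the union over $n$ of the images of $Sym(n)$ is $d_H$-dense in $[E]$. This amounts to approximating an arbitrary $u \in [E]$ by dyadic-like (or rational-partition) step bijections, using that $E$ is generated by rational translations and that any $u \in [E]$ moves $\mu$-a.e.\ point by a rational; a standard approximation argument (refine the partition so that on all but a set of small measure, $u$ agrees with a piecewise-rational-translation map that can be realized by some $Sym(n)$ after refining) should give density. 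Combined with the fact that $[E]$ is complete in $d_H$, the metric completion of the image equals all of $[E]$.

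\emph{The main obstacle} I anticipate is \emph{Step 2}, specifically establishing density rigorously. The subtlety is that an element $u \in [E]$ need not exactly permute any finite partition into equal intervals: it moves each point by \emph{some} rational, but these rationals vary and need not share a common denominator. One must argue that the set of points moved by ``large-denominator'' rationals has small measure, and that on the complement $u$ can be matched by an honest block permutation after passing to a sufficiently fine common refinement. Handling the bijectivity constraint (ensuring the approximating map is a genuine permutation of blocks, not merely a partial matching) is where the care is needed; one likely invokes a marriage/Hall-type or measure-balancing argument to repair the approximation into an element of some $Sym(n)$ while controlling the measure of disagreement. Once density is secured, the identification of the completion with $[E]$ is formal.
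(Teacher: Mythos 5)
Your proposal is correct and follows essentially the same route as the paper: the identical block-permutation embeddings $\Phi_n$ (with the amplification $f_{n,r}$ corresponding to refining the partition into $r$ equal subintervals, giving compatibility and isometry), followed by density of $\bigcup_n\Phi_n(P_n)$ in $[E]$ proved via the decomposition of $\phi\in[E]$ into the sets $A_q=\{x:\phi(x)=x+q\ (\mathrm{mod}\ 1)\}$, keeping finitely many $q_i$ capturing measure $>1-\vp$, and approximating by sets in the algebra generated by the intervals $[j/n,(j+1)/n)$ with $n$ large enough that each $q_i$ is a multiple of $1/n$. The bijectivity obstacle you flag in Step 2 is resolved in the paper exactly as you anticipate: the approximating sets are disjointified within the finite interval algebra and the resulting partial block map is then extended to an honest element of $\Phi_n(P_n)$ agreeing with $\phi$ on a set of measure $>1-2\vp$.
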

\begin{proof}
For $n\in\nz$ embed $P_n$ into $[E]$ by dividing the interval $[0,1]$ in $n$ equal parts and permuting these small intervals. The formula will look something like:
\[\Phi_n:P_n\to[E],\ \ \Phi_n(p)=x\to\frac{p([nx])+\{nx\}}{n},\]
where $[y],\{y\}$ are the integer and fractional part of $y$ and permutations $p\in P_n$ act on the set $\{0,1,\ldots,n-1\}$. It is easy to check that the maps $\Phi_n$ preserve the metrics and they are compatible with the directed system. It follows that we have an embedding $\Phi:\varinjlim P_n\to[E]$. To show that this map is surjective we need to check that $\bigcup_n\Phi_n(P_n)$ is dense in $[E]$.

For this, let $\phi\in[E]$ and $\vp>0$. For $q\in\qz\cap[0,1)$ let
\[A_q=\{x\in[0,1]:\phi(x)=x+q\ (mod\ 1)\}.\]
Then $\{A_q:q\in\qz\cap[0,1)\}$ is a partition of $[0,1]$ and we can choose $\{q_1,\ldots,q_k\}$ a finite set such that $\sum_{i=1}^k\mu(A_{q_i})>1-\vp$. To simplify notation, denote $A_{q_i}$ by $A_i$.

Let $\bb_n$ be the $\sigma-algebra$ generated by the sets $[j/n,(j+1)/n)$, where $j=0,\ldots,(n-1)$, such that elements in $\Phi_n(P_n)$ are measurable as functions from $([0,1],\bb_n)$ to $([0,1],\bb_n)$.

Using the regularity properties of the Lebesgue measure, we can find a suficiently large $n$ and sets $B_i'\in\bb_n$ such that $\mu(A_i\Delta B_i')<\vp/k^2$. As sets $\{A_i\}$ are disjoint it follows that $\mu(B_i'\cap B_j')<2\vp/k^2$ for $i\neq j$. Take out overlaping intervals to get disjoint sets $B_i\in\bb_n$ such that $\mu(A_i\Delta B_i)<2\vp/k$.

Increasing $n$ we can assume that each $q_i$ is an integer multiple of $1/n$. Then there exists an element $\psi\in\Phi_n(P_n)$ such that $\psi(x)=x+q_i$ if $x\in B_i$. Then $\psi=\phi$ on $C=\bigcup_{i=1}^k(A_i\cap B_i)$. But $\mu(C)>\sum_{i=1}^k(\mu(A_i)-\vp/k)>1-2\vp$.

It follows that the distance between $\phi$ and $\psi$ is smaller than $2\vp$ and we are done.
\end{proof}

Actually much more is true. By dividing the interval $[0,1]$ in $n$ equal parts we can construct a trace preserving embedding $\Psi_n:M_n(\cz)\to M(E)$. Then $\Psi_n(D_n)\subset A$ and $\Psi_n(P_n)\subset\ee$, the later being actually just $\Phi_n$ from the proof above composed with the canonical isomorphism between $[E]$ and $\ee$. The maps $(\Psi_n)_n$ are also constructing isomorphisms $\varinjlim D_n\simeq A$ and $\varinjlim M_n\simeq M(E)$.

In this article we construct sofic representations in $\ee^\omega$, but, due to the last theorem, most of the time we still deal just with $\Pi_{k\to\omega}P_{n_k}$. This is good for some definitions and it is also intuitive. So the next notation and theorem are quite important for the technical part of the article.

\begin{nt}
For a fixed sequence $\{n_k\}$ construct $\Psi:\Pi_{k\to\omega}M_{n_k}\to M(E)^\omega$ defined by $\Psi(\Pi_{k\to\omega}x_k=\Pi_{k\to\omega}\Psi_{n_k}(x_k)$. 
\end{nt}

By construction $\Psi(\Pi_{k\to\omega}D_{n_k})\subset A^\omega$ and $\Psi(\Pi_{k\to\omega}P_{n_k})\subset\ee^\omega$. Also note that $\Psi$ is trace preserving, in particular it is injective for a fixed sequence $\{n_k\}$.

\begin{te}\label{dense permutation}
Let $\{u_i\}_{i\in\nz}$ be a countable set of elements in $\ee^\omega$ and $\{a_i\}_{i\in\nz}$ be a countable set of elements in $A^\omega$. Then there exists a sequence $\{n_k\}_k$ and elements $v_i\in\Pi_{k\to\omega}P_{n_k}$ and $b_i\in\Pi_{k\to\omega}D_{n_k}$ such that $\Psi(v_i)=u_i$ and $\Psi(b_i)=a_i$ for any $i\in\nz$.
\end{te}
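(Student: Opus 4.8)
The plan is to realise each target element as an ultraproduct of finite-dimensional approximants, chosen so that the approximation error tends to $0$ along the sequence. First I would fix, for every $i$, representing sequences $u_i=\Pi_{k\to\omega}u_i^{(k)}$ with $u_i^{(k)}\in\ee$ and $a_i=\Pi_{k\to\omega}a_i^{(k)}$ with $a_i^{(k)}\in A$, which is possible since $\ee^\omega$ and $A^\omega$ are the metric ultrapowers of the fixed objects $\ee$ and $A$. The goal then becomes to produce a single sequence of dimensions $\{n_k\}$ and, for each $k$, permutations $p_i^{(k)}\in P_{n_k}$ and diagonals $d_i^{(k)}\in D_{n_k}$ so that $\Psi_{n_k}(p_i^{(k)})$ is Hamming-close to $u_i^{(k)}$ and $\Psi_{n_k}(d_i^{(k)})$ is $\norm{\cdot}_2$-close to $a_i^{(k)}$, with the errors vanishing as $k\to\infty$. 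Putting $v_i=\Pi_{k\to\omega}p_i^{(k)}$ and $b_i=\Pi_{k\to\omega}d_i^{(k)}$, the vanishing of the errors forces $\lim_{k\to\omega}d_H(\Psi_{n_k}(p_i^{(k)}),u_i^{(k)})=0$ and the analogous statement for the $b_i$, which is precisely the required identities $\Psi(v_i)=u_i$ and $\Psi(b_i)=a_i$ in the ultrapower.

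The engine driving the construction is Theorem~\ref{directlimit} together with its analogue for diagonals: the union $\bigcup_n\Psi_n(P_n)$ is dense in $\ee$ and the union $\bigcup_n\Psi_n(D_n)$ is dense in $A$. Moreover, compatibility of the $\Psi_n$ with the directed system, namely $\Psi_{nr}\circ f_{n,r}=\Psi_n$, gives the inclusions $\Psi_n(P_n)\subseteq\Psi_{nr}(P_{nr})$ and $\Psi_n(D_n)\subseteq\Psi_{nr}(D_{nr})$, so both unions are directed. This is the key point: any finite family of elements taken from $\bigcup_n\Psi_n(P_n)$ already lies in a single $\Psi_N(P_N)$, and likewise for the diagonals. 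Thus, at level $k$, I would first use density to approximate each of $u_1^{(k)},\dots,u_k^{(k)}$ and $a_1^{(k)},\dots,a_k^{(k)}$ within $1/k$ by elements of the respective unions, and then invoke directedness to pull all of these finitely many approximants into a common dimension $n_k$, obtained as a common multiple of the finitely many dimensions that appear. Since $\Psi_{n_k}$ is trace preserving, hence injective, the preimages define the desired $p_i^{(k)}\in P_{n_k}$ and $d_i^{(k)}\in D_{n_k}$; for $i>k$ I simply set these to be the identity. Replacing $n_k$ by a large amplification if necessary, which changes neither the images nor the distances because $f_{n,r}$ is trace preserving, I can also arrange that $n_k\to\infty$.

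With these choices, for each fixed $i$ and every $k\ge i$ one has $d_H(\Psi_{n_k}(p_i^{(k)}),u_i^{(k)})<1/k$ and $\norm{\Psi_{n_k}(d_i^{(k)})-a_i^{(k)}}_2<1/k$, so both error sequences converge to $0$ and a fortiori have vanishing $\omega$-limit for any free ultrafilter $\omega$; this yields exactly $\Psi(v_i)=u_i$ and $\Psi(b_i)=a_i$. The only genuinely substantive step is the simultaneous common-dimension approximation, and it is precisely there that the directed structure of the system $(P_n,f_{n,r})$ enters. Once that is in hand, the remainder is a routine diagonal argument over the countably many targets, combined with the elementary fact that a sequence of errors tending to $0$ has $\omega$-limit $0$.
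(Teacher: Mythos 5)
Your proposal is correct and takes essentially the same route as the paper's own proof: a diagonal argument that, at stage $k$, approximates the $k$-th level representatives of the first $k$ targets to within an error tending to $0$, using the density of $\bigcup_n\Psi_n(P_n)$ in $\ee$ (Theorem \ref{directlimit}) and its analogue $\varinjlim D_n\simeq A$, and then sets $v_i=\Pi_{k\to\omega}v_i^k$. The only difference is that you make explicit some points the paper leaves implicit (directedness of the system $(P_n,f_{n,r})$ to pull the finitely many approximants into one common dimension $n_k$, injectivity of $\Psi_{n_k}$ to define the preimages, and amplifying to ensure $n_k\to\infty$), which does not change the argument.
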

\begin{proof}
The proof is just a consequence of Theorem \ref{directlimit} and of the analogue result $\varinjlim D_n\simeq A$, by a diagonal argument. For simplicity in writing (not in the argument) we only consider the family $\{u_i\}_i$. Let $u_i=\Pi_{k\to\omega}u_i^k$, where $u_i^k\in\ee$. Choose $\{\vp_k\}_k$ strictly positive numbers such that $\vp_k\to_k 0$. 

By Theorem \ref{directlimit}, $\bigcup_n\Psi_n(P_n)$ is dense in $\ee$. Then, for each $k$, there exists $n_k\in\nz$ and $v_1^k,\ldots, v_k^k\in P_{n_k}$ such that $d_H(u_i^k,\Psi_{n_k}(v_i^k))<\vp_k$, for any $i\leqslant k$.
Define $v_i=\Pi_{k\to\omega}v_i^k$. It's clear by construction that $\Psi(v_i)=u_i$.
\end{proof}

\section{The space of diffuse sofic representations}

\begin{de}
A \emph{diffuse sofic representation} is a group morphism $\Theta:G\to\ee^\omega$ such that $Tr(\Theta(g))=0$ for any $g\neq e$.
\end{de}

\begin{de}
For a countable group $G$ denote by $Sof(G,\ee^\omega)$ the space of diffuse sofic representations factored by conjugacy: $\Theta_1\sim \Theta_2$ iff there exists $u\in\ee^\omega$ such that $\Theta_2=Ad u\circ\Theta_1$.
\end{de}

\begin{nt}
For a diffuse sofic representation $\Theta$ we denote by $[\Theta]_\ee$ its class in $Sof(G,\ee^\omega)$.
\end{nt}
 
We prove now that there is a bijection between $Sof(G,P^\omega)$ and $Sof(G,\ee^\omega)$.

\begin{p}
Let $\Theta_1,\Theta_2$ be two sofic representations such that $[\Theta_1]_P=[\Theta_2]_P$. Then $[\Psi\circ\Theta_1]_\ee=[\Psi\circ\Theta_2]_\ee$.
\end{p}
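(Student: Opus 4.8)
The plan is to show that the embedding $\Psi$ simply \emph{absorbs} amplifications, and then to transport across $\Psi$ the conjugacy that witnesses $[\Theta_1]_P=[\Theta_2]_P$. The key fact I would isolate first is a compatibility that is already built into the directed system of the previous section: for all $n,r$ one has $\Psi_{nr}\circ f_{n,r}=\Psi_n$ as maps $M_n\to M(E)$. This is exactly the assertion that the family $(\Psi_n)_n$ is compatible with the connecting maps $f_{n,r}(x)=x\otimes 1_r$, which is what lets them define the isomorphism $\varinjlim M_n\simeq M(E)$. Concretely, on a permutation $p\in P_n$ both $\Phi_{nr}(p\otimes 1_r)$ and $\Phi_n(p)$ act on each subinterval $[i/(nr),(i+1)/(nr))$ by the very same translation, so they coincide; since here $\Theta$ takes values in permutations this is all that is needed.

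Passing to ultraproducts, the coordinatewise identity $\Psi_{n_kr_k}\circ f_{n_k,r_k}=\Psi_{n_k}$ shows that applying $\Psi$ (attached to the sequence $\{n_kr_k\}$) to an amplification $\Theta\otimes 1_{r_k}$ returns $\Psi\circ\Theta$ (with $\Psi$ now attached to $\{n_k\}$); that is, $\Psi\bigl((\Theta\otimes 1_{r_k})(g)\bigr)=\Psi\bigl(\Theta(g)\bigr)$ for every $g\in G$. In words: applying $\Psi$ to an amplification of $\Theta$ yields exactly $\Psi\circ\Theta$, so amplifications become invisible on the $\ee^\omega$ side.

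Next I would unwind the hypothesis. Since $[\Theta_1]_P=[\Theta_2]_P$, by definition there are amplifications $\Theta_1\otimes 1_{r_k}$ and $\Theta_2\otimes 1_{s_k}$ living over a common sequence $m_k=n_k^{(1)}r_k=n_k^{(2)}s_k$ and an element $p\in\Pi_{k\to\omega}P_{m_k}$ with $(\Theta_2\otimes 1_{s_k})(g)=p\,(\Theta_1\otimes 1_{r_k})(g)\,p^*$ for all $g\in G$. I then apply the map $\Psi$ attached to $\{m_k\}$: it is a trace preserving $\ast$-homomorphism carrying $\Pi_{k\to\omega}P_{m_k}$ into $\ee^\omega$, hence multiplicative and $\ast$-preserving. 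Setting $u=\Psi(p)\in\ee^\omega$ and invoking the absorption identity on each side, I obtain $(\Psi\circ\Theta_2)(g)=u\,(\Psi\circ\Theta_1)(g)\,u^*$ for all $g\in G$. Finally, because $\Psi$ is trace preserving, $Tr\bigl((\Psi\circ\Theta_i)(g)\bigr)=Tr(\Theta_i(g))=0$ for $g\neq e$, so $\Psi\circ\Theta_1$ and $\Psi\circ\Theta_2$ are genuine diffuse sofic representations conjugate by $u$, giving $[\Psi\circ\Theta_1]_\ee=[\Psi\circ\Theta_2]_\ee$.

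The only real care needed is bookkeeping with the dimension sequences: the two amplifications must be arranged to live over the \emph{same} sequence $\{m_k\}$, so that a single $\Psi$ applies simultaneously to both sides of the conjugacy, and one must check that the absorption identity survives the ultraproduct — which it does, being a pointwise identity in each coordinate $k$. I expect this indexing to be the main, though entirely routine, obstacle; no analytic input beyond the directed-system compatibility of the previous section is required.
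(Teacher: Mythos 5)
Your proposal is correct and follows essentially the same route as the paper: the compatibility $\Psi_{nr}\circ f_{n,r}=\Psi_n$ (the paper phrases it as $\Phi_n(s)=\Phi_{nr}(s\otimes 1_r)$, so $\Psi(\Theta)=\Psi(\Theta\otimes 1_{r_k})$) makes amplifications invisible, and the conjugacy is then transported through the multiplicative, trace-preserving map $\Psi$ by setting $u=\Psi(p)$. Your extra bookkeeping about arranging both amplifications over a common sequence $\{m_k\}$ is exactly what the paper's two-paragraph proof does implicitly.
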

\begin{proof}
Let $\Theta:G\to\Pi_{k\to\omega}P_{n_k}$ be a sofic representation and $\{r_k\}_k\subset\nz^*$. Inspecting the definition of $\Phi_n$ from the proof of Theorem \ref{directlimit} we see that $\Phi_n(s)=\Phi_n(s\otimes 1_r)$, for any $s\in P_n$ and $r\in\nz^*$. It follows that $\Psi(\Theta)=\Psi(\Theta\otimes 1_{r_k})$.

Assume now that $\Theta_1$ and $\Theta_2$ are conjugate. So there exists $u\in\Pi_{k\to\omega}P_{n_k}$ such that $u\Theta_1u^*=\Theta_2$. Then $\Psi(u)\Psi(\Theta_1)\Psi(u)^*=\Psi(\Theta_2)$, implying that $[\Psi(\Theta_1)]_\ee=[\Psi(\Theta_2)]_\ee$.
\end{proof}

\begin{te}
The map $A:Sof(G,P^\omega)\to Sof(G,\ee^\omega)$ defined by $A([\Theta]_P)=[\Psi\circ\Theta]_\ee$ is a bijection.
\end{te}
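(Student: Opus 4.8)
The plan is to prove surjectivity and injectivity separately; that $A$ is well defined is precisely the preceding proposition. Throughout I use freely that $\Psi$ is a \emph{trace preserving} and \emph{injective} group homomorphism carrying $\Pi_{k\to\omega}P_{n_k}$ into $\ee^\omega$, and that by the preceding proposition $\Psi(\Theta)=\Psi(\Theta\otimes 1_{r_k})$ for every amplification. For surjectivity, start from a diffuse sofic representation $\Theta:G\to\ee^\omega$. Since $G$ is countable, $\{\Theta(g)\}_{g\in G}$ is a countable subset of $\ee^\omega$, so Theorem \ref{dense permutation} supplies a sequence $\{n_k\}$ and elements $v_g\in\Pi_{k\to\omega}P_{n_k}$ with $\Psi(v_g)=\Theta(g)$ for all $g$. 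The key observation is that $g\mapsto v_g$ is automatically a homomorphism: multiplicativity and injectivity of $\Psi$ give $\Psi(v_gv_h)=\Theta(g)\Theta(h)=\Theta(gh)=\Psi(v_{gh})$, forcing $v_gv_h=v_{gh}$, and the same relation at $g=h=e$ forces $v_e=1$. As $\Psi$ preserves the trace, $Tr(v_g)=Tr(\Theta(g))=0$ for $g\neq e$, so $\Theta':=(g\mapsto v_g)$ is a sofic representation with $\Psi\circ\Theta'=\Theta$, whence $A([\Theta']_P)=[\Theta]_\ee$.

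For injectivity, suppose $\Theta_1:G\to\Pi_{k\to\omega}P_{n_k}$ and $\Theta_2:G\to\Pi_{k\to\omega}P_{m_k}$ satisfy $A([\Theta_1]_P)=A([\Theta_2]_P)$, i.e.\ there is $u\in\ee^\omega$ with $u\,\Psi(\Theta_1(g))\,u^*=\Psi(\Theta_2(g))$ for every $g$. First I would pass to a common dimension sequence: amplifying $\Theta_1$ by $\{m_k\}$ and $\Theta_2$ by $\{n_k\}$ places both in $\Pi_{k\to\omega}P_{N_k}$ with $N_k=n_km_k$, and amplification invariance of $\Psi$ leaves the intertwining relation intact. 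So, after renaming, I may assume $\Theta_1,\Theta_2:G\to\Pi_{k\to\omega}P_{N_k}$ with $u\,\Psi(\Theta_1(g))\,u^*=\Psi(\Theta_2(g))$.

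The crux is to replace the full-group conjugator $u=\Pi_{k\to\omega}u_k$ (with $u_k\in\ee$) by a genuine permutation after a further amplification. Fix $\vp_k\to 0$. By Theorem \ref{directlimit} the union $\bigcup_n\Psi_n(P_n)$ is dense in $\ee$, so for each $k$ I can choose $d_k$ and $q_k\in P_{d_k}$ with $d_H(u_k,\Psi_{d_k}(q_k))<\vp_k$. Setting $N'_k=\mathrm{lcm}(d_k,N_k)$ and $p_k=q_k\otimes 1_{N'_k/d_k}$ leaves the element of $\ee$ unchanged, since $\Psi_{d_k}(q_k)=\Psi_{N'_k}(p_k)$, while making $N'_k$ a multiple of $N_k$. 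Then $p:=\Pi_{k\to\omega}p_k\in\Pi_{k\to\omega}P_{N'_k}$ satisfies $\Psi(p)=u$, because $d_H(u_k,\Psi_{N'_k}(p_k))<\vp_k\to 0$. Amplifying $\Theta_i$ by $r_k=N'_k/N_k$ to $\hat\Theta_i:G\to\Pi_{k\to\omega}P_{N'_k}$, amplification invariance gives $\Psi(\hat\Theta_i(g))=\Psi(\Theta_i(g))$, so $\Psi(p\,\hat\Theta_1(g)\,p^*)=u\,\Psi(\Theta_1(g))\,u^*=\Psi(\Theta_2(g))=\Psi(\hat\Theta_2(g))$. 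Injectivity of $\Psi$ yields $p\,\hat\Theta_1(g)\,p^*=\hat\Theta_2(g)$ for all $g$, so the amplifications $\hat\Theta_1,\hat\Theta_2$ of $\Theta_1,\Theta_2$ are conjugate by the permutation $p$, i.e.\ $[\Theta_1]_P=[\Theta_2]_P$.

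The main obstacle is exactly this last lifting step: the conjugacy is a priori witnessed only in the type $II_1$ world $\ee^\omega$, and one must manufacture a conjugator inside some $\Pi_{k\to\omega}P_{N'_k}$. The delicate point is not the density approximation of $u$ itself (Theorem \ref{directlimit}), but the bookkeeping that forces the approximating dimensions $N'_k$ to be multiples of $N_k$, so that the approximant of $u$ and the amplified $\Theta_i$ live over one and the same sequence; the $\mathrm{lcm}$ adjustment combined with the amplification invariance of $\Psi$ is what reconciles these two demands. Surjectivity, by contrast, is essentially immediate once one notes that injectivity of $\Psi$ promotes any lift of a homomorphism to a homomorphism.
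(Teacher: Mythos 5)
Your proof is correct and takes essentially the same route as the paper: surjectivity by lifting the countable family $\{\Theta(g)\}_{g\in G}$ through Theorem \ref{dense permutation} and using injectivity of $\Psi$ to promote the lift to a homomorphism, and injectivity by lifting the conjugator $u$ to a genuine permutation and placing everything over a common dimension sequence before applying injectivity of $\Psi$. Your explicit $\mathrm{lcm}$ bookkeeping is just a hands-on version of the paper's one-line step (the paper invokes Theorem \ref{dense permutation} to get $\Psi(v)=u$ over some sequence and then amplifies $\Theta_1$, $\Theta_2$ and $v$ to a common sequence of dimensions, e.g.\ the products, which accomplishes the same reconciliation).
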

\begin{proof}
If $\Theta$ is a diffuse sofic representation then  use Theorem \ref{dense permutation} to construct a sofic representation $\Gamma$ with $\Psi(\Gamma)=\Theta$. This shows that $A$ is surjective.

Let now $\Theta_1$ and $\Theta_2$ be sofic representation such that $A([\Theta_1]_P)=A([\Theta_2]_P)$. Then there is $u\in\ee^\omega$ so that $u\Psi(\Theta_1)u^*=\Psi(\Theta_2)$. Again by Theorem \ref{dense permutation} there is $v$ in some $\Pi_{k\to\omega}P_{n_k}$ with $\Psi(v)=u$. Now, amplifying $\Theta_1,\Theta_2$ and $v$ to a common sequence of dimensions and using the injectivity of $\Psi$ we get $(v\otimes 1)(\Theta_1\otimes 1)(v\otimes 1)^*=\Theta_2\otimes 1$. It follows that $[\Theta_1]_P=[\Theta_2]_P$.
\end{proof}

A convex-like structure is defined on a metric space. We transport, via the bijection $A$, the metric defined in \cite{Pa2} Section 1.4.

\begin{de}
Let $G=\{g_0,g_1,\ldots\}$ where $g_0=e$. For $[\Theta_1]_\ee,[\Theta_2]_\ee\in Sof(G,\ee^\omega)$ define:
\[d([\Theta_1],[\Theta_2])=inf\{\big(\sum_{i=1}^\infty\frac1{4^i}||\Theta_1(g_i)-u\Theta_2(g_i)u^*||_2^2\big)^{1/2}:u\in\ee^\omega\}.\]
\end{de}

It follows that $Sof(G,P^\omega)$ and $Sof(G,\ee^\omega)$ are isomorphic as metric spaces, via the map $A$.

\subsection{The direct sum of the universal sofic group}

Let $\lambda\in[0,1]$. We construct a morphism $\Phi_\lambda:\ee^\omega\oplus\ee^\omega\to\ee^\omega$ to be used in the definition of the convex structure. 

 Let $u,v\in\ee^\omega$. Use Theorem \ref{dense permutation} to get $u_1,v_1\in\Pi_{k\to\omega}P_{n_k}$ so that $\Psi(u_1)=u$ and $\Psi(v_1)=v$. Choose two sequences of natural numbers $\{r_k\}_k$ and $\{s_k\}_k$ such that $\lim_{k\to\omega}r_k/(r_k+s_k)=\lambda$. Construct $(u_1\otimes 1_{r_k} )\oplus(v_1\otimes 1_{s_k}):G\to\Pi_{k\to\omega}P_{(r_k+s_k)n_k}$. Define:
 \[\Phi_\lambda(u\oplus v)=\Psi[(u_1\otimes 1_{r_k})\oplus(v_1\otimes 1_{s_k})].\]

Note that $\Phi_\lambda(u\oplus v)$ does not depend on the particular choice of the sequences $\{r_k\}_k$ and $\{s_k\}_k$ as long as $\lim_{k\to\omega}r_k/(r_k+s_k)=\lambda$. The equality $\Psi(x)=\Psi(x\otimes 1)$ for any $x\in M_n(\cz)$ is important here. Also, the ultraproduct construction is factoring out small dependencies.

If $a_\lambda=\chi_{[0,\lambda]}\in A$ (characteristic function) then $\Phi_\lambda(u\oplus v)$ commutes with $(a_\lambda)^\omega\in A^\omega$ for any $u,v\in\ee^\omega$ . This is usual geometry in type $II_1$ factors. It is also a central observation for these convex structures that has to be made a theorem.

\begin{te}\label{image}
The image of $\Phi_\lambda$ is composed of those elements that commutes with $(a_\lambda)^\omega$, where $a_\lambda$ is the characteristic function of $[0,\lambda]$:
\[\Phi_\lambda(\ee^\omega\oplus\ee^\omega)=\{u\in\ee^\omega:ua_\lambda=a_\lambda u\}.\]
\end{te}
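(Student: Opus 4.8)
The plan is to prove the two inclusions separately; the containment $\subseteq$ is a direct computation, while the reverse containment $\supseteq$ carries the real content. For $\subseteq$, take an element of the image, written as $\Psi(W)$ with $W=(u_1\otimes 1_{r_k})\oplus(v_1\otimes 1_{s_k})\in\Pi_{k\to\omega}P_{(r_k+s_k)n_k}$, and let $P=\Pi_{k\to\omega}(1_{r_kn_k}\oplus 0_{s_kn_k})$ be the diagonal projection onto the first block. Since $W$ is block-diagonal we have $WP=PW$, and as $\Psi$ is a trace-preserving embedding on a fixed sequence it preserves this relation. Moreover $\Psi(P)=a_\lambda^\omega$: the first block has proportion $r_kn_k/((r_k+s_k)n_k)=r_k/(r_k+s_k)\to_\omega\lambda$, and $\Psi_{(r_k+s_k)n_k}$ sends the initial diagonal block of that proportion to $\chi_{[0,r_k/(r_k+s_k))}$, whose $\omega$-limit in $A^\omega$ is $\chi_{[0,\lambda)}=a_\lambda$. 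Hence every element of the image commutes with $a_\lambda^\omega$.

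For $\supseteq$, fix $w\in\ee^\omega$ with $wa_\lambda^\omega=a_\lambda^\omega w$; I want to produce $u,v\in\ee^\omega$ with $\Phi_\lambda(u\oplus v)=w$. First I would descend to honest permutation matrices. Applying Theorem \ref{dense permutation} to $\{w,a_\lambda^\omega\}$ yields $w'=\Pi_{k\to\omega}w'_k\in\Pi_{k\to\omega}P_{m_k}$ and a diagonal projection $p'=\Pi_{k\to\omega}p'_k\in\Pi_{k\to\omega}D_{m_k}$ with $\Psi(w')=w$ and $\Psi(p')=a_\lambda^\omega$. Injectivity of $\Psi$ on the fixed sequence turns the commutation $\Psi(w'p')=\Psi(p'w')$ into $w'p'=p'w'$, and trace-preservation gives $|S_k|/m_k\to_\omega\lambda$, where $S_k=\mathrm{supp}(p'_k)$. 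The relation $w'p'=p'w'$ is only an $\omega$-limit statement, so I would perturb each permutation $w'_k$ on a set of proportion tending to $0$ along $\omega$ into a permutation $\tilde w_k$ that genuinely preserves $S_k$ (repairing the at most $|S_k\Delta w'_k(S_k)|$ mismatched points). This does not change the $\omega$-class, so $\Psi(\Pi_{k\to\omega}\tilde w_k)=w$ still holds, while now $\tilde w_k(S_k)=S_k$ exactly.

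Since $\tilde w_k$ preserves $S_k$, after sorting $S_k$ to the front it splits as $\tilde w_k=\alpha_k\oplus\beta_k$ with $\alpha_k\in P_{|S_k|}$ and $\beta_k\in P_{m_k-|S_k|}$ (I assume $0<\lambda<1$, so both sizes tend to $\infty$ along $\omega$; the endpoints $\lambda\in\{0,1\}$ are degenerate and handled directly). Set $u=\Psi(\Pi_{k\to\omega}\alpha_k)$ and $v=\Psi(\Pi_{k\to\omega}\beta_k)$, elements of $\ee^\omega$ realized over the sequences $\{|S_k|\}$ and $\{m_k-|S_k|\}$. It then remains to identify $\Phi_\lambda(u\oplus v)$ with $w$, for which I would invoke that $\Phi_\lambda(u\oplus v)$ is independent of the representatives and of the amplifying sequences as long as their ratio tends to $\lambda$: computing it from the block-diagonal $\alpha_k\oplus\beta_k$ (of proportion $|S_k|/m_k\to_\omega\lambda$) gives $\Phi_\lambda(u\oplus v)=\Psi(\Pi_{k\to\omega}(\alpha_k\oplus\beta_k))=\Psi(\Pi_{k\to\omega}\tilde w_k)=w$. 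Conceptually this is the assertion that $w$ decomposes as $pwp+(1-p)w(1-p)$ with $p=a_\lambda^\omega$, that each corner $p\ee^\omega p$ is isomorphic to $\ee^\omega$ through the rescaling $\kappa_p$ of $[0,\lambda)$ onto $[0,1)$, and that $\Phi_\lambda$ is by construction the reassembly $u\oplus v\mapsto\kappa_p^{-1}(u)+\kappa_q^{-1}(v)$.

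The main obstacle is precisely this last identification: matching the block-amplification recipe defining $\Phi_\lambda$ — a single base element amplified by $1_{r_k}$ and placed in the top-left block — with the ad hoc block $\alpha_k$ extracted from $w$, whose size $|S_k|$ need not factor as $r_kn_k$. The reconciliation rests on the representation-independence of the corner map, equivalently on the fact, underwritten by the identity $\Psi(x)=\Psi(x\otimes 1)$ and the collapsing of small dependencies in the ultraproduct, that $\Psi$ of a block-diagonal permutation depends only on the $\Psi$-images of its blocks and on the block proportions, not on the particular representatives or block sizes realizing those data. This is the ``well-defined problem'' deferred as routine earlier, but it is the genuine heart of the reverse inclusion, and I would verify it with care, together with checking that the subset-repair perturbation of the second step stays inside the full group.
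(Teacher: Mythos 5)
Your proof is correct in outline but takes a genuinely different route from the paper's. The paper's own treatment does not argue through the amplification recipe at all: it rests on a direct description of $\Phi_\lambda$ at the level of full groups, namely conjugation by the piecewise linear rescaling $\psi_\lambda:[0,2]\to[0,1]$, i.e. $\varphi_\lambda(u\oplus v)=\psi_\lambda\circ(u\oplus v)\circ\psi_\lambda^{-1}$ for rational $\lambda$, and then $\Phi_\lambda(\Pi_{k\to\omega}u_k\oplus\Pi_{k\to\omega}v_k)=\Pi_{k\to\omega}\varphi_{\lambda_k}(u_k\oplus v_k)$ for rationals $\lambda_k\to\lambda$. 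With that description the theorem is nearly immediate: each $\varphi_{\lambda_k}(u_k\oplus v_k)$ preserves $[0,\lambda_k]$ exactly, giving the inclusion $\subseteq$; conversely, a representative $(w_k)_k$ of an element commuting with $(a_\lambda)^\omega$ moves only a set of measure $\to_\omega 0$ across $[0,\lambda_k]$, so after the same kind of repair you perform one may assume $w_k([0,\lambda_k])=[0,\lambda_k]$ on the nose and read off $u_k$ and $v_k$ by rescaling the two corners via $\psi_{\lambda_k}$. Your argument instead stays with the amplification definition from the body of the paper, descends to permutation matrices by Theorem \ref{dense permutation}, repairs, and splits into blocks $\alpha_k\oplus\beta_k$ --- all of which is sound --- but then needs the compatibility statement you defer at the end: that $\Psi$ of a block-diagonal permutation is determined by the $\Psi$-images of the blocks together with the block proportions. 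You correctly identify this as the real content, but in your write-up it is promised rather than proved, and since the amplification definition by itself says nothing about blocks of size $|S_k|$ not of the form $r_kn_k$, the proposal as it stands has a genuine (though reparable) gap exactly there.

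The gap closes cleanly, and the closing argument is precisely the paper's mechanism: from the explicit formula $\Phi_n(p)(x)=\bigl(p([nx])+\{nx\}\bigr)/n$ one checks the exact finite-level identity $\Phi_m(\alpha\oplus\beta)=\varphi_{s/m}\bigl(\Phi_s(\alpha)\oplus\Phi_{m-s}(\beta)\bigr)$ for $\alpha\in P_s$, $\beta\in P_{m-s}$, with no error terms at all, and the ultraproduct then absorbs the drift $|S_k|/m_k\to_\omega\lambda$ exactly as it absorbs the choice of the sequence $(\lambda_k)_k$. Once this lemma is in place your proof is complete, and it in effect verifies that the amplification definition and the rescaling definition of $\Phi_\lambda$ agree --- more than the theorem itself asks, which is why the paper's route is shorter. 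One detail worth writing out in your repair step: $|S_k\setminus w_k'(S_k)|=|w_k'(S_k)\setminus S_k|$, so a correcting permutation supported on these two sets exists, has vanishing normalized support, and stays inside the finite symmetric group, hence the $\omega$-class of $w$ is indeed unchanged.
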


\begin{ob}
The definition of $\Phi_\lambda$ can be extended to $M(E)^\omega\oplus M(E)^\omega$. A nice application is the formula $(a_\lambda)^\omega=\Phi_\lambda(1\oplus 0)$. 
\end{ob}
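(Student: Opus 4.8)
The plan is to mirror at the level of $M(E)$ the construction already carried out for $\ee$, so the first task is to secure an $M(E)^\omega$-analogue of Theorem \ref{dense permutation}. Since $\varinjlim M_n\simeq M(E)$, the union $\bigcup_n\Psi_n(M_n)$ is dense in $M(E)$ in the $\|\cdot\|_2$-norm, and hence the same diagonal argument used in the proof of Theorem \ref{dense permutation} produces, for any norm-bounded countable family $\{x_i\}\subset M(E)^\omega$, a single sequence $\{n_k\}$ and lifts $y_i\in\Pi_{k\to\omega}M_{n_k}$ with $\Psi(y_i)=x_i$. I would record this as a lemma, and then define the extension exactly as before: given $x\oplus y\in M(E)^\omega\oplus M(E)^\omega$, choose common lifts $x_1,y_1\in\Pi_{k\to\omega}M_{n_k}$ with $\Psi(x_1)=x$ and $\Psi(y_1)=y$, pick $\{r_k\}_k,\{s_k\}_k$ with $\lim_{k\to\omega}r_k/(r_k+s_k)=\lambda$, and set $\Phi_\lambda(x\oplus y)=\Psi[(x_1\otimes 1_{r_k})\oplus(y_1\otimes 1_{s_k})]$, where $\oplus$ denotes the block-diagonal embedding $M_{r_kn_k}\oplus M_{s_kn_k}\hookrightarrow M_{(r_k+s_k)n_k}$.

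Well-definedness is the step requiring care, and it is the point I would flag as the main obstacle, though it runs word for word as in the $\ee^\omega$ case. There are two independences to verify: of the choice of lifts, and of the choice of $\{r_k\}_k,\{s_k\}_k$. Both reduce to the identity $\Psi(z)=\Psi(z\otimes 1_r)$, valid for every $z\in M_n(\cz)$ and not merely for permutations, together with the fact that the ultraproduct over $\omega$ absorbs discrepancies that vanish along $\omega$. Because $\Psi$ is trace preserving and each $\Psi_n$ is defined by subdivision of $[0,1]$, the computations establishing that $\Phi_\lambda$ on $\ee^\omega\oplus\ee^\omega$ is independent of these choices transfer verbatim; I would simply remark that no new phenomenon arises when passing from permutation matrices to arbitrary matrices, so the extension is consistent with the original $\Phi_\lambda$ on the subgroups.

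For the formula I would compute $\Phi_\lambda(1\oplus 0)$ straight from the definition. Taking the obvious lifts $x_1=1$ and $y_1=0$ in $\Pi_{k\to\omega}M_{n_k}$, the block-diagonal element is $1_{r_kn_k}\oplus 0_{s_kn_k}$, a diagonal projection lying in $D_{(r_k+s_k)n_k}$. Under $\Psi_{(r_k+s_k)n_k}$, which subdivides $[0,1]$ into $(r_k+s_k)n_k$ equal intervals, this projection is sent to the characteristic function of the first $r_kn_k$ of them, that is to $a_{r_k/(r_k+s_k)}=\chi_{[0,r_k/(r_k+s_k)]}$, so $\Phi_\lambda(1\oplus 0)=\Pi_{k\to\omega}a_{r_k/(r_k+s_k)}$. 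Finally, since $\|a_{r_k/(r_k+s_k)}-a_\lambda\|_2^2=|r_k/(r_k+s_k)-\lambda|\to 0$ along $\omega$, the two sequences represent the same element of $A^\omega$, which yields $\Phi_\lambda(1\oplus 0)=(a_\lambda)^\omega$ as claimed.
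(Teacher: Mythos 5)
Your argument is correct, but it takes a genuinely different route from the one in the paper. You extend the matrix-amplification definition of $\Phi_\lambda$ by upgrading Theorem \ref{dense permutation} to a lifting lemma for $M(E)^\omega$, using $\varinjlim M_n\simeq M(E)$, and then reuse the block-diagonal formula; the one point you should make explicit there is that the lifts of a bounded family must be chosen with uniformly bounded operator norm (so that they really represent elements of the tracial ultraproduct $\Pi_{k\to\omega}M_{n_k}$), which is arranged by approximating through the trace-preserving conditional expectations onto $\Psi_n(M_n)$, these being contractive in operator norm. The paper instead realises $\Phi_\lambda$ concretely: for rational $\lambda$ it conjugates by the piecewise-linear bijection $\psi_\lambda:[0,2]\to[0,1]$ given by $\psi_\lambda(x)=\lambda x$ on $[0,1]$ and $\psi_\lambda(x)=\lambda+(1-\lambda)(x-1)$ on $[1,2]$, setting $\varphi_\lambda(u\oplus v)=\psi_\lambda\circ(u\oplus v)\circ\psi_\lambda^{-1}$, and then defines $\Phi_\lambda=\Pi_{k\to\omega}\varphi_{\lambda_k}$ for rationals $\lambda_k\to\lambda$. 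Under that description the Observation is nearly definitional: conjugation by a point map acts on the whole Feldman--Moore algebra, not only on full-group elements, so the extension to $M(E)^\omega\oplus M(E)^\omega$ is automatic, and $\varphi_{\lambda_k}(1\oplus 0)$ is on the nose the characteristic function $a_{\lambda_k}$, after which your closing estimate $\|a_{\lambda_k}-a_\lambda\|_2^2=|\lambda_k-\lambda|\to 0$ along $\omega$ finishes identically. What your approach buys is uniformity with the paper's official definition of $\Phi_\lambda$ via lifts and amplifications, at the cost of a well-definedness check that you rightly flag but only assert transfers verbatim; what the conjugation picture buys is that the extension, the formula $(a_\lambda)^\omega=\Phi_\lambda(1\oplus 0)$, and even Theorem \ref{image} (the image of $\Phi_\lambda$ being the commutant of $(a_\lambda)^\omega$) all become visible at a glance.
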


\subsection{Cutting representations}
 
 Cutting a diffuse sofic representation by a projection in $A^\omega$ represents the inverse operation of the direct sum. By Theorem \ref{image}, one needs a projection commuting with $\Theta$ (as $a_\lambda$ is in the role of the projection cutting a corner of the sofic representation).
 
\begin{nt}
Denote by $T_1:\ee^\omega\oplus\ee^\omega\to\ee^\omega$ the projection on the first summand, i.e. $T_1(u\oplus v)=u$. Similarly $T_2(u\oplus v)=v$.
\end{nt}
 
\begin{de}
Let $p$ be a projection in $A^\omega$ commuting with $\Theta$. Let $\lambda=Tr(p)$. Choose an element  $u\in\ee^\omega$ such that $upu^*=a_\lambda$. Define $\Theta_p^u=T_1(\Phi_\lambda^{-1}(u\Theta u^*))$.
\end{de} 
 
\begin{p}
The class of $\Theta_p^u$ does not depend on the choice of $u$.
\end{p}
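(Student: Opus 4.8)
The plan is to show that if $u,w\in\ee^\omega$ are two elements with $upu^*=a_\lambda$ and $wpw^*=a_\lambda$, then $T_1(\Phi_\lambda^{-1}(u\Theta u^*))$ and $T_1(\Phi_\lambda^{-1}(w\Theta w^*))$ define the same class in $Sof(G,\ee^\omega)$, i.e. they are conjugate by an element of $\ee^\omega$. The key object to analyse is the element $x=wu^*\in\ee^\omega$. Since $upu^*=a_\lambda=wpw^*$, we have $x a_\lambda x^*=wu^*(upu^*)uw^*=wpw^*=a_\lambda$, so $x$ is a unitary in $\ee^\omega$ that commutes with $a_\lambda$. By Theorem \ref{image}, $x$ lies in the image of $\Phi_\lambda$, so there is a unique pair $y\oplus z\in\ee^\omega\oplus\ee^\omega$ with $\Phi_\lambda(y\oplus z)=x$, i.e. $x=\Phi_\lambda(y\oplus z)$ with $y=T_1(\Phi_\lambda^{-1}(x))$.

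Next I would track how $x$ intertwines the two cut representations. By construction $w\Theta w^* = (wu^*)(u\Theta u^*)(wu^*)^* = x\,(u\Theta u^*)\,x^*$, and since $x$ commutes with $a_\lambda$, this is a relation between two elements that both lie in the commutant of $a_\lambda$, hence both in the image of $\Phi_\lambda$. Because $\Phi_\lambda$ is an injective morphism onto this commutant (the injectivity being exactly what lets us write $\Phi_\lambda^{-1}$), conjugation by $x=\Phi_\lambda(y\oplus z)$ on the image corresponds, after applying $\Phi_\lambda^{-1}$, to conjugation by $y\oplus z$ on $\ee^\omega\oplus\ee^\omega$. Thus
\[
\Phi_\lambda^{-1}(w\Theta w^*)=(y\oplus z)\,\Phi_\lambda^{-1}(u\Theta u^*)\,(y\oplus z)^*.
\]
Applying the projection $T_1$, which is a homomorphism respecting the direct sum decomposition, gives
\[
\Theta_p^{w}=T_1(\Phi_\lambda^{-1}(w\Theta w^*))=y\,T_1(\Phi_\lambda^{-1}(u\Theta u^*))\,y^*=y\,\Theta_p^{u}\,y^*,
\]
which exhibits $\Theta_p^w$ and $\Theta_p^u$ as conjugate by $y\in\ee^\omega$, hence $[\Theta_p^w]_\ee=[\Theta_p^u]_\ee$.

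The main obstacle I expect is verifying cleanly that $\Phi_\lambda$ intertwines conjugation on the two sides, i.e. that $\Phi_\lambda((y\oplus z)(\cdot)(y\oplus z)^*)=\Phi_\lambda(y\oplus z)\,\Phi_\lambda(\cdot)\,\Phi_\lambda(y\oplus z)^*$ as maps on the commutant of $a_\lambda$. This is really the statement that $\Phi_\lambda$ is a genuine group morphism and that its inverse on the commutant is also a morphism compatible with the direct-sum structure; one must be careful because $\Phi_\lambda$ is defined only after lifting via Theorem \ref{dense permutation} and amplifying, so the multiplicativity has to be checked at the level of the representing permutation matrices and then seen to survive the ultraproduct and the map $\Psi$. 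The earlier remark that $\Psi(x)=\Psi(x\otimes 1)$ and that the ultraproduct "factors out small dependencies" is what guarantees this is well-defined; once multiplicativity of $\Phi_\lambda$ and its compatibility with $T_1$ are in hand, the conjugacy argument above is essentially formal.
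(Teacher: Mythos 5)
Your proof is correct and takes essentially the same route as the paper: the paper likewise observes that $uv^*$ (your $x=wu^*$) commutes with $a_\lambda$, uses multiplicativity of $\Phi_\lambda^{-1}$ on the commutant of $a_\lambda$ together with $T_1$, and conjugates by $T_1(\Phi_\lambda^{-1}(uv^*))$, which is exactly your $y$. The only cosmetic difference is that you invoke Theorem \ref{image} to write $x=\Phi_\lambda(y\oplus z)$ explicitly, while the paper manipulates $\Phi_\lambda^{-1}$ directly.
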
 
 \begin{proof}
 Let $u,v\in\ee^\omega$ be so that $upu^*=a_\lambda=vpv^*$. Then $uv^*$ commutes with $a_\lambda$. We have:
 \begin{align*}
 \Theta_p^u=&T_1(\Phi_\lambda^{-1}(u\Theta u^*))=T_1(\Phi_\lambda^{-1}(uv^*v\Theta v^*vu^*))=T_1[\Phi_\lambda^{-1}(uv^*)\Phi_\lambda^{-1}(v\Theta v^*)\Phi_\lambda^{-1}(vu^*)]\\
 =&T_1(\Phi_\lambda^{-1}(uv^*))\cdot\Theta_p^v\cdot T_1(\Phi_\lambda^{-1}(uv^*))^*.
 \end{align*}
 As $T_1(\Phi_\lambda^{-1}(uv^*))$ is an element of $\ee^\omega$, it follows that $[\Theta_p^u]_\ee=[\Theta_p^v]_\ee$.
 \end{proof}
 
 \begin{de}
 For a projection $p\in A^\omega$ commuting with $\Theta$ define $[\Theta_p]_\ee$ to be the class of $\Theta_p^u$ for a $u\in\ee^\omega$ so that $upu^*=a_{Tr(p)}$.
 \end{de}
 
The following results are useful, both for the proof of the main result and also as an exercise to get the intuition of direct sums and amplifications of diffuse sofic representations.
 
 \begin{lemma}
 Let $p$ be a projection in $\Theta'\cap A^\omega$ with $Tr(p)=\lambda$. Choose an element  $u\in\ee^\omega$ such that $upu^*=a_\lambda$. Then $[T_2(\Phi_\lambda^{-1}(u\Theta u^*))]_\ee=[\Theta_{1-p}]_\ee$.
 \end{lemma}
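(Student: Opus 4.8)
The plan is to exhibit a single element $s\in\ee^\omega$ implementing a ``flip'' between the $\lambda$-decomposition and the $(1-\lambda)$-decomposition, and then to observe that $su$ is an admissible choice in the definition of $[\Theta_{1-p}]_\ee$. To construct $s$, work at the finite level: recall that $\Phi_\lambda(x\oplus y)=\Psi[(x_1\otimes 1_{r_k})\oplus(y_1\otimes 1_{s_k})]$, where the block-diagonal permutation acts by $x_1\otimes 1_{r_k}$ on the first $r_kn_k$ coordinates and by $y_1\otimes 1_{s_k}$ on the remaining $s_kn_k$ coordinates. Let $\sigma_k\in P_{(r_k+s_k)n_k}$ be the permutation interchanging these two blocks, and set $s=\Psi(\Pi_{k\to\omega}\sigma_k)\in\ee^\omega$. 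Since $\sigma_k$ depends only on the block sizes and not on $x_1,y_1$, conjugation by $\sigma_k$ turns the block-diagonal permutation into $(y_1\otimes 1_{s_k})\oplus(x_1\otimes 1_{r_k})$, which is exactly the element defining $\Phi_{1-\lambda}(y\oplus x)$ for the admissible parameters $r_k'=s_k$, $s_k'=r_k$ (note $s_k/(r_k+s_k)\to 1-\lambda$). As $\Psi$ is a homomorphism, this yields the flip identity
\[ s\,\Phi_\lambda(x\oplus y)\,s^*=\Phi_{1-\lambda}(y\oplus x). \]
Taking $x\oplus y=1\oplus 0$ and using $a_\lambda=\Phi_\lambda(1\oplus 0)$ from the Observation gives $s\,a_\lambda\,s^*=\Phi_{1-\lambda}(0\oplus 1)=1-a_{1-\lambda}$.

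Next I would compute how $su$ acts on $1-p$. From $upu^*=a_\lambda$ we get $u(1-p)u^*=1-a_\lambda$, hence
\[ (su)(1-p)(su)^*=s(1-a_\lambda)s^*=1-(1-a_{1-\lambda})=a_{1-\lambda}. \]
Since $Tr(1-p)=1-\lambda$, this shows that $su$ is an admissible choice defining $[\Theta_{1-p}]_\ee$, so that $\Theta_{1-p}^{su}=T_1(\Phi_{1-\lambda}^{-1}((su)\Theta(su)^*))$. Writing $\Phi_\lambda^{-1}(u\Theta u^*)=\Theta_1\oplus\Theta_2$ (so $\Theta_2=T_2(\Phi_\lambda^{-1}(u\Theta u^*))$ is the object of interest), the flip identity applied to each $u\Theta(g)u^*$ gives $(su)\Theta(su)^*=s(u\Theta u^*)s^*=\Phi_{1-\lambda}(\Theta_2\oplus\Theta_1)$. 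Applying $\Phi_{1-\lambda}^{-1}$ and then $T_1$ yields $\Theta_{1-p}^{su}=\Theta_2$. Finally, since the class $[\Theta_{1-p}]_\ee$ does not depend on the admissible element used to define it (by the preceding Proposition), I conclude that $[\Theta_{1-p}]_\ee=[\Theta_{1-p}^{su}]_\ee=[T_2(\Phi_\lambda^{-1}(u\Theta u^*))]_\ee$, as required.

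The main obstacle is the first paragraph: making the flip identity precise \emph{uniformly}. The definition of $\Phi_\lambda$ involves choices of preimages under $\Psi$ and of amplifying sequences $\{r_k\},\{s_k\}$, and I must ensure that a single $s$ works simultaneously for the whole countable family $\{u\Theta(g)u^*:g\in G\}$. This is handled by fixing, via Theorem \ref{dense permutation}, one common sequence of dimensions realizing all of these elements at once, together with the already-noted fact that $\Phi_\lambda(u\oplus v)$ is independent of the chosen sequences as long as the ratio converges to $\lambda$; the permutations $\sigma_k$ then depend only on the fixed block sizes, so $s$ is a genuine fixed element of $\ee^\omega$. Once the flip is in place, the remaining steps are routine verifications resting on the well-definedness of the cutting operation.
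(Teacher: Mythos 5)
Your proof is correct and follows essentially the same route as the paper: the paper's proof rests on exactly your flip identity, stated there as the existence of $v\in\ee^\omega$ with $va_{1-\lambda}v^*=1-a_\lambda$ and $v\Phi_{1-\lambda}(x\oplus y)v^*=\Phi_\lambda(y\oplus x)$ (so $v=s^*$), after which it reads off $\Theta_2=\Theta_{1-p}^{v^*u}$ just as you read off $\Theta_{1-p}^{su}=\Theta_2$ and invokes well-definedness of the cutting. The only difference is one of detail, not of method: the paper asserts the flip element as ``the main observation'' without construction, whereas you build it explicitly from the block-swap permutations $\sigma_k$ and correctly handle the uniformity over the countable family via Theorem \ref{dense permutation}.
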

\begin{proof}
The main observation is that there exists $v\in\ee^\omega$ such that $va_{1-\lambda}v^*=1-a_\lambda$ and $v\Phi_{1-\lambda}(x\oplus y)v^*=\Phi_\lambda(y\oplus x)$ for any $x,y\in\ee^\omega$.

Let $\Theta_1,\Theta_2:G\to\ee^\omega$ be such that $\Theta_1\oplus\Theta_2=\Phi_\lambda^{-1}(u\Theta u^*)$. Then $u\Theta u^*=\Phi_\lambda(\Theta_1\oplus\Theta_2)=v\Phi_{1-\lambda}(\Theta_2\oplus\Theta_1)v^*$. It follows that $\Theta_2=T_1(\Phi_{1-\lambda}^{-1}(v^*u\Theta u^*v))$. As $v^*u(1-p)u^*v=v^*(1-a_\lambda)v=a_{1-\lambda}$, by definition we have $\Theta_2=\Theta_{1-p}^{v^*u}$. As $\Theta_2=T_2(\Phi_\lambda^{-1}(u\Theta u^*))$ it follows that $[T_2(\Phi_\lambda^{-1}(u\Theta u^*))]_\ee=[\Theta_{1-p}]_\ee$.
\end{proof}

In a way the following Proposition is an anti-amplification. This feature is unique to diffuse sofic representations.

\begin{p}\label{anti amplification}
For any diffuse sofic representation $\Theta$ and any $\lambda\in(0,1)$ there exists a projection $p\in A^\omega$, commuting with $\Theta$, such that $Tr(p)=\lambda$ and $[\Theta]_\ee=[\Theta_p]_\ee$
\end{p}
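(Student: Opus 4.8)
The plan is to realise $\Theta$ itself as a weighted self-sum and then read off the cutting projection from the conjugating unitary. First I would show that $\Phi_\lambda(\Theta\oplus\Theta)$ is conjugate to $\Theta$ inside $\ee^\omega$. To do this, lift $\Theta$ elementwise to permutations via Theorem \ref{dense permutation}: for the countable family $\{\Theta(g)\}_{g\in G}$ choose a common sequence $\{n_k\}$ and elements $\theta(g)\in\Pi_{k\to\omega}P_{n_k}$ with $\Psi(\theta(g))=\Theta(g)$ (these lifts need not be a morphism, which is harmless since only the identities $\Psi(\theta(g))=\Theta(g)$ are used). Picking $\{r_k\},\{s_k\}$ with $r_k/(r_k+s_k)\to\lambda$, the defining formula for $\Phi_\lambda$ gives $\Phi_\lambda(\Theta(g)\oplus\Theta(g))=\Psi[(\theta(g)\otimes 1_{r_k})\oplus(\theta(g)\otimes 1_{s_k})]$ for every $g$.

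Next I would exploit the block structure. Under the canonical identification $\cz^{n_k}\otimes(\cz^{r_k}\oplus\cz^{s_k})\cong\cz^{n_k}\otimes\cz^{r_k+s_k}$ one has $(\theta(g)\otimes 1_{r_k})\oplus(\theta(g)\otimes 1_{s_k})=\sigma_k(\theta(g)\otimes 1_{r_k+s_k})\sigma_k^*$, where $\sigma_k$ reorders the $r_k+s_k$ copies and, crucially, does not depend on $g$ (it only depends on $n_k,r_k,s_k$). Setting $w=\Psi(\Pi_{k\to\omega}\sigma_k)\in\ee^\omega$ and invoking $\Psi(x)=\Psi(x\otimes 1)$, I obtain $\Phi_\lambda(\Theta(g)\oplus\Theta(g))=w\,\Psi(\theta(g))\,w^*=w\,\Theta(g)\,w^*$ for all $g$, that is $\Phi_\lambda(\Theta\oplus\Theta)=w\Theta w^*$.

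Finally I would reverse the cut. Put $p=w^*a_\lambda w$. Since $\ee^\omega$ normalises $A^\omega$, $p$ is a projection in $A^\omega$ with $Tr(p)=Tr(a_\lambda)=\lambda$; and because $a_\lambda$ commutes with the image of $\Phi_\lambda$, it commutes with $w\Theta w^*$, so $p$ commutes with $\Theta$. Taking $u=w$ in the definition of $\Theta_p$ we have $upu^*=a_\lambda$, hence $\Theta_p^u=T_1(\Phi_\lambda^{-1}(w\Theta w^*))=T_1(\Phi_\lambda^{-1}(\Phi_\lambda(\Theta\oplus\Theta)))=T_1(\Theta\oplus\Theta)=\Theta$. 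In particular $[\Theta_p]_\ee=[\Theta]_\ee$, which is the claim.

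The step I expect to require the most care is the first, the conjugacy $\Phi_\lambda(\Theta\oplus\Theta)\sim\Theta$: this is precisely where the diffuse (type $II_1$) setting pays off, since amplification is absorbed through $\Psi(x)=\Psi(x\otimes 1)$ and no genuine amplification is needed, in contrast with the situation in $\Pi_{k\to\omega}P_{n_k}$. The two fine points to verify are that the reshuffling permutation $\sigma_k$ is uniform in $g$, and that $p=w^*a_\lambda w$ genuinely lands in $A^\omega$, which is exactly the normalisation property of $\ee^\omega$ over $A^\omega$.
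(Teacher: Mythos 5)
Your proof is correct and is essentially the paper's own argument: the paper likewise lifts $\Theta$ to a permutation model $\Gamma$ with $\Psi\circ\Gamma=\Theta$ via Theorem \ref{dense permutation}, amplifies, and cuts by a projection of the form $1_{n_k}\otimes q$ with $Tr(q)=\lambda$, the point in both cases being the absorption $\Psi(x\otimes 1)=\Psi(x)$, and your $p=w^*a_\lambda w$ is exactly $\Psi(1_{n_k}\otimes q)$ in disguise. The only difference is bookkeeping: you make the transport step explicit by exhibiting the conjugacy $\Phi_\lambda(\Theta\oplus\Theta)=w\Theta w^*$ and computing $\Theta_p^w=\Theta$ outright, where the paper instead notes $[\Gamma]_P=[\Gamma_{1_{n_k}\otimes q}]_P$ (the cut is again an amplification of $\Gamma$) and transports this equality through the bijection $A$.
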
 
 \begin{proof}
 Let $\Gamma:G\to\Pi_{k\to\omega}P_{n_k}$ be a sofic representation such that $\Theta=\Psi\circ\Gamma$. Let $\{r_k\}$ be a strictly increasing sequence of natural numbers. Let also $q\in\Pi_{k\to\omega}D_{r_k}$ be a projection such that $Tr(q)=\lambda$. Then $1_{n_k}\otimes q$ commutes with $\Gamma\otimes 1_{r_k}$. Moreover $\Gamma_{1_{n_k}\otimes q}$ is still an amplification of $\Gamma$, so $[\Gamma]_P=[\Gamma_{1_{n_k}\otimes q}]_P$.
 
 Let $p=\Psi(1_{n_k}\otimes q)$. Then $p$ commutes with $\Theta=\Psi(\Gamma)$ and $Tr(p)=\lambda$. Also the equality $[\Gamma]_P=[\Gamma_{1_{n_k}\otimes q}]_P$, transported via $\Psi$, becomes $[\Theta]_\ee=[\Theta_p]_\ee$.
 \end{proof}
 
 One problem in proving that the old action $\alpha(\Theta)$ is ergodic is constructing elements in the commutant $\Theta'$. The last proposition solved this problem, by an easy amplification. In the following lemma we note that there are plenty of projections inside $\Theta'\cap A^\omega$.
 
 \begin{lemma}\label{diffuse abelian}
 The algebra $\Theta'\cap A^\omega$ is diffuse, i.e. it has no minimal projection.
 \end{lemma}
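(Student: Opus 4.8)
The plan is to show that no nonzero projection in $\Theta'\cap A^\omega$ is minimal, by producing for each such projection a proper subprojection inside the same algebra. Fix a nonzero projection $P\in\Theta'\cap A^\omega$ and put $\lambda=Tr(P)>0$; I want to construct $Q\in\Theta'\cap A^\omega$ with $0<Tr(Q)<\lambda$ and $Q\leq P$. The whole idea is to transport the question to the rigid permutation picture $\Pi_{k\to\omega}P_{n_k}$ via Theorem \ref{dense permutation}, dilute the projection there by an amplification, and push the result back through $\Psi$. In spirit this is Proposition \ref{anti amplification} performed \emph{below} $P$ rather than globally.

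First I would realize $\Theta$ and $P$ simultaneously in a single permutation picture. Applying Theorem \ref{dense permutation} to the countable family $\{\Theta(g):g\in G\}$ together with the single element $P\in A^\omega$, I obtain one sequence $\{n_k\}_k$, a map $\Gamma\colon G\to\Pi_{k\to\omega}P_{n_k}$ with $\Psi\circ\Gamma=\Theta$, and an element $\pi\in\Pi_{k\to\omega}D_{n_k}$ with $\Psi(\pi)=P$. Because $\Psi$ is an injective, trace-preserving $*$-homomorphism, several things follow at once: $\Gamma$ is a genuine group morphism (and a sofic representation, using that $\Theta$ is diffuse, exactly as in the surjectivity proof of the bijection $A$); the relations $\Psi(\pi^2)=\Psi(\pi)=\Psi(\pi^*)$ force $\pi$ to be a projection in $\Pi_{k\to\omega}D_{n_k}$; the identities $\Psi(\pi\,\Gamma(g))=P\,\Theta(g)=\Theta(g)\,P=\Psi(\Gamma(g)\,\pi)$ together with injectivity force $\pi$ to commute with $\Gamma$; and $Tr(\pi)=Tr(P)=\lambda$.

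Next comes the dilation, which is the conceptual heart of the argument. I amplify: choose $r_k$ (already $r_k=2$ suffices) and a projection $q\in\Pi_{k\to\omega}D_{r_k}$ with $Tr(q)=1/2$. Then $\pi\otimes q$ is a projection in $\Pi_{k\to\omega}D_{n_kr_k}$, it commutes with $\Gamma\otimes 1_{r_k}$ since the two tensor legs commute, and $\pi\otimes q\leq\pi\otimes 1_{r_k}$. Set $Q=\Psi(\pi\otimes q)$. Using $\Psi(x)=\Psi(x\otimes 1_{r_k})$ one has $\Psi(\pi\otimes 1_{r_k})=\Psi(\pi)=P$, so positivity of the $*$-homomorphism $\Psi$ gives $Q=\Psi(\pi\otimes q)\leq\Psi(\pi\otimes 1_{r_k})=P$; moreover $Q$ lies in $A^\omega$, commutes with $\Theta=\Psi(\Gamma\otimes 1_{r_k})$, and $Tr(Q)=Tr(\pi)\,Tr(q)=\lambda/2$. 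Thus $0<Tr(Q)=\lambda/2<\lambda=Tr(P)$, so $Q$ is a proper subprojection of $P$ in $\Theta'\cap A^\omega$, and $P$ is not minimal.

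The delicate step is the simultaneous realization in the second paragraph: getting $\Theta$ and $P$ into one and the same finite-dimensional picture and verifying that the lifted $\pi$ honestly commutes with $\Gamma$ (rather than only $\omega$-almost). This is precisely where Theorem \ref{dense permutation} and the injectivity of $\Psi$ are indispensable. The amplification that follows is what manufactures the extra room to halve $\pi$: a fixed $P_{n_k}$ may carry a commuting diagonal projection with no invariant splitting, but after tensoring by $1_{r_k}$ the projection $\pi\otimes 1$ always splits as $\pi\otimes q$ and $\pi\otimes(1-q)$, and $\Psi$ collapses $\pi\otimes 1$ back onto $P$ while keeping $\pi\otimes q$ as a strictly smaller piece. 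I expect no further obstacle, since the trace and commutation bookkeeping is routine once the pullback is set up.
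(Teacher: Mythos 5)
Your proof is correct and follows essentially the same route as the paper: realize $\Theta$ and the projection simultaneously in a single permutation picture via Theorem \ref{dense permutation}, use injectivity of $\Psi$ to get honest commutation of the lift with $\Gamma$, tensor with a trace-$\frac12$ diagonal projection under a $2$-fold amplification, and push back through $\Psi$ using $\Psi(x)=\Psi(x\otimes 1)$. The paper's proof is exactly this with $r_k=2$ and $a\in D_2$; your extra bookkeeping (verifying the lift is a projection, the subprojection relation $Q\leq P$) only makes explicit what the paper leaves implicit.
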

 \begin{proof}
 Let $p\in\Theta'\cap A^\omega$ be a projection. Choose a sequence $\{n_k\}_k$ so that there exists $\Gamma:G\to\Pi_{k\to\omega}P_{n_k}$ a sofic representation and $q\in\Pi_{k\to\omega}D_{n_k}$ a projection such that $\Theta=\Psi\circ\Gamma$ and $p=\Psi(q)$. Because $\Psi$ is injective on $\Pi_{k\to\omega}M_{n_k}$, $q$ commutes with $\Gamma$.
 
 Let $a\in D_2$ be a projection of trace $1/2$. Construct $q\otimes a\in\Pi_{k\to\omega}D_{2n_k}$. This is a projection with $Tr(q\otimes a)=\frac12Tr(q)$ that commutes with $\Gamma\otimes 1_2$. Then $\Psi(q\otimes a)$ commutes with $\Psi(\Gamma\otimes 1_2)=\Theta$ and $\Psi(q\otimes a)$ is a sub-projection of $p$.
 \end{proof}

 \section{The convex structure}
 
 \begin{de}
 For $\Theta_1,\Theta_2$ diffuse sofic representations and $\lambda\in[0,1]$ define:
 \[\lambda[\Theta_1]_\ee+(1-\lambda)[\Theta_2]_\ee=[\Phi_\lambda(\Theta_1\oplus\Theta_2)]_\ee.\]
 \end{de}
 
 At this stage we can consider $\lambda[\Theta_1]_\ee+(1-\lambda)[\Theta_2]_\ee$ to be just a formal notation for the element in $Sof(G,\ee^\omega)$ that we constructed. After the axioms of convex-like structures are checked, we can use Capraro-Fritz theorem to deduce that $Sof(G,\ee^\omega)$ endowed with the metric and this convex structure is a bounded closed convex subset of a Banach space. Then $\lambda[\Theta_1]_\ee+(1-\lambda)[\Theta_2]_\ee$ is a convex combination in  this Banach space.
 
 As an observation, this definition is just the old convex structure on $Sof(G,P^\omega)$ transported on $Sof(G,\ee^\omega)$ via the map $A$. This is enough to deduce that the axions of convex-like structures (see Section 2 of \cite{Br}) are satisfied by $[Sof(G,\ee^\omega),d]$. However, it is easy to check them directly from the definitions presented in this paper. 
 
\begin{p}\label{isolating summand}
If $[\Theta]_\ee=\lambda[\Theta_1]_\ee+(1-\lambda)[\Theta_2]_\ee$ then there exists $p\in A^\omega$ a projection commuting with $\Theta$, $Tr(p)=\lambda$ such that $[\Theta_p]_\ee=[\Theta_1]_\ee$.
\end{p}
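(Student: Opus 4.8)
The plan is to unwind the hypothesis into a single concrete conjugacy and then read off the projection directly. By the definition of the convex combination, the equality $[\Theta]_\ee=\lambda[\Theta_1]_\ee+(1-\lambda)[\Theta_2]_\ee$ means $[\Theta]_\ee=[\Phi_\lambda(\Theta_1\oplus\Theta_2)]_\ee$, so there is a unitary $w\in\ee^\omega$ with $w\Theta w^*=\Phi_\lambda(\Theta_1\oplus\Theta_2)$. This is the only consequence of the hypothesis that I would need.

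Next I would produce the projection. Since $\Phi_\lambda(\Theta_1\oplus\Theta_2)$ lies in the image of $\Phi_\lambda$, Theorem \ref{image} says it commutes with $a_\lambda$; hence $w\Theta w^*$ commutes with $a_\lambda$, and therefore $\Theta$ commutes with $p:=w^*a_\lambda w$. I would then verify that $p$ has the three required properties: it is a projection because $w$ is a unitary and $a_\lambda$ is a projection; it lies in $A^\omega$ because $\ee^\omega$ normalizes $A^\omega$ (the ultraproduct version of the fact that $P_n$ normalizes $D_n$, equivalently that $\ee$ normalizes $A$); and $Tr(p)=Tr(a_\lambda)=\lambda$ since conjugation by a unitary preserves the trace.

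Finally I would compute $[\Theta_p]_\ee$ and recognize it as $[\Theta_1]_\ee$. The definition of $\Theta_p$ allows any $u\in\ee^\omega$ with $upu^*=a_\lambda$, and $w$ already does this, since by construction $wpw^*=a_\lambda$. Taking $u=w$ gives
\[\Theta_p^w=T_1(\Phi_\lambda^{-1}(w\Theta w^*))=T_1(\Phi_\lambda^{-1}(\Phi_\lambda(\Theta_1\oplus\Theta_2)))=T_1(\Theta_1\oplus\Theta_2)=\Theta_1,\]
so, using that the class of $\Theta_p^u$ is independent of $u$, I conclude $[\Theta_p]_\ee=[\Theta_1]_\ee$ on the nose.

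The argument is short because all the genuine content is packaged in the earlier results, so the work is in checking the pieces fit. The step I expect to need the most care is the claim $p\in A^\omega$: it rests on $\ee^\omega$ normalizing $A^\omega$, so I would make sure this normalizing property is actually in hand (first for $\ee$ acting on $A$, then passed to the ultraproduct) rather than silently assumed. A secondary point worth confirming is that $\Phi_\lambda$ is genuinely injective on $\ee^\omega\oplus\ee^\omega$, so that the application of $\Phi_\lambda^{-1}$ in the final computation is unambiguous; this is implicit in the cutting construction but should be stated.
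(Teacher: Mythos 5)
Your proof is correct and follows essentially the same route as the paper: the paper's version simply says ``we can assume $\Theta=\Phi_\lambda(\Theta_1\oplus\Theta_2)$'' and takes $p=a_\lambda$ with $u=Id$, while you make that reduction explicit by conjugating with $w$ and setting $p=w^*a_\lambda w$, $u=w$. The extra checks you flag (that $\ee^\omega$ normalises $A^\omega$, and that $\Phi_\lambda^{-1}$ is unambiguous on the image for $\lambda\in(0,1)$) are exactly the routine points the paper leaves implicit.
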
 
 \begin{proof}
 We can assume that $\Theta=\Phi_\lambda(\Theta_1\oplus\Theta_2)$. Then $p=a_\lambda$. By definition $\Theta_p^{Id}=T_1(\Phi_\lambda^{-1}(\Theta))=\Theta_1$. It follows that $[\Theta_p]_\ee=[\Theta_1]_\ee$.
 \end{proof}

 \begin{p}\label{P3.3.4}(Analog of Proposition 3.3.4 of \cite{Br})
Let $p,q\in \Theta'\cap A^\omega$ be such that $Tr(p)=Tr(q)$. Then $[\Theta_p]=[\Theta_q]$ if and only if there exists an element $u\in\Theta'\cap\ee^\omega$ such that $upu^*=q$.
\end{p}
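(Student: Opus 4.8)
The plan is to prove the easy implication by a direct change of conjugator, and to reduce the hard implication to a single ``transport'' lemma that turns an equality of cut-down classes into a partial isometry lying simultaneously in the commutant $\Theta'$ and in the normalising pseudogroup of $A^\omega$. Throughout set $\lambda=Tr(p)=Tr(q)$; the cases $\lambda\in\{0,1\}$ are trivial (then $p=q$ and $u=1$ works), so assume $0<\lambda<1$. For the implication $(\Leftarrow)$, suppose $u\in\Theta'\cap\ee^\omega$ satisfies $upu^*=q$, so that $u^*\Theta u=\Theta$ and $u^*qu=p$. Pick $w\in\ee^\omega$ with $wpw^*=a_\lambda$. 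Then $(wu^*)q(wu^*)^*=w(u^*qu)w^*=wpw^*=a_\lambda$ and $(wu^*)\Theta(wu^*)^*=w(u^*\Theta u)w^*=w\Theta w^*$, whence $\Theta_q^{wu^*}=T_1(\Phi_\lambda^{-1}(w\Theta w^*))=\Theta_p^{w}$. Since $[\Theta_q]_\ee$ does not depend on the chosen conjugator, $[\Theta_q]_\ee=[\Theta_p]_\ee$.

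The engine for $(\Rightarrow)$ is the following claim: if $p,q\in\Theta'\cap A^\omega$ have equal trace $\lambda$ and $[\Theta_p]_\ee=[\Theta_q]_\ee$, then there is a partial isometry $v\in\Theta'$ with $v^*v=p$, $vv^*=q$, which is moreover the restriction of a full-group element (so it lies in the pseudogroup normalising $A^\omega$). To produce it, choose $w_1,w_2\in\ee^\omega$ with $w_1pw_1^*=a_\lambda=w_2qw_2^*$, so that $w_1\Theta w_1^*=\Phi_\lambda(\Theta_p\oplus\Theta_p^{c})$ and $w_2\Theta w_2^*=\Phi_\lambda(\Theta_q\oplus\Theta_q^{c})$, where $\Theta_p^{c}=T_2(\Phi_\lambda^{-1}(w_1\Theta w_1^*))$ and similarly $\Theta_q^{c}$; by the Lemma preceding Proposition \ref{anti amplification} these satisfy $[\Theta_p^{c}]_\ee=[\Theta_{1-p}]_\ee$ and $[\Theta_q^{c}]_\ee=[\Theta_{1-q}]_\ee$. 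Pick $s\in\ee^\omega$ with $s\Theta_p s^*=\Theta_q$ and set $h=w_2^*\,\Phi_\lambda(s\oplus 1)\,w_1\in\ee^\omega$. Because $\Phi_\lambda(s\oplus1)$ commutes with $a_\lambda$ and $\Phi_\lambda$ is a group morphism, one computes $hph^*=w_2^*a_\lambda w_2=q$ and $h\Theta h^*=w_2^*\Phi_\lambda(\Theta_q\oplus\Theta_p^{c})w_2$. By the construction of $\Phi_\lambda$ the $a_\lambda$-corner of $\Phi_\lambda(\,\cdot\,\oplus\,\cdot\,)$ depends only on the first summand, so the $q$-corners of $h\Theta h^*$ and of $\Theta$ coincide: $q\,h\Theta(g)h^*=q\,\Theta(g)$ for all $g$. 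Putting $v=hp=qh$ gives $v^*v=p$, $vv^*=q$, and multiplying the corner identity on the right by $h$ (and using that $q$ commutes with $\Theta$) yields $v\Theta(g)=\Theta(g)v$; finally $v=hp$ is a full-group element cut down to $p$, proving the claim.

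Now I assemble a genuine unitary. The engine applied to $p,q$ gives $v$ as above, but to extend it to a unitary of $\Theta'\cap\ee^\omega$ carrying $p$ to $q$ I also need a partial isometry on the complements, and for that I must first establish $[\Theta_{1-p}]_\ee=[\Theta_{1-q}]_\ee$. Conjugating by $w_1,w_2\in\ee^\omega$ and unwinding the definition of the convex combination gives $[\Theta]_\ee=\lambda[\Theta_p]_\ee+(1-\lambda)[\Theta_{1-p}]_\ee=\lambda[\Theta_q]_\ee+(1-\lambda)[\Theta_{1-q}]_\ee$; since $[\Theta_p]_\ee=[\Theta_q]_\ee$, the cancellation law valid in the ambient Banach space (via the Capraro--Fritz embedding \cite{Ca-Fr}, in which these convex combinations are honest affine combinations) forces $[\Theta_{1-p}]_\ee=[\Theta_{1-q}]_\ee$. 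Applying the engine to the pair $1-p,1-q$ produces $v'\in\Theta'$ with $v'^*v'=1-p$, $v'v'^*=1-q$, again a restriction of a full-group element. Set $u=v+v'$. Orthogonality of sources and of ranges gives $v^*v'=0=v'^*v$ and $vv'^*=0=v'v^*$, so $u^*u=p+(1-p)=1$ and $uu^*=q+(1-q)=1$; being the gluing of two pseudogroup partial isometries along the partition $\{p,1-p\}$ of the identity, $u$ is a genuine element of the full group $\ee^\omega$; it commutes with $\Theta$ because $v$ and $v'$ do; and $upu^*=vpv^*=vv^*=q$.

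The main obstacle is exactly the step $[\Theta_{1-p}]_\ee=[\Theta_{1-q}]_\ee$. A commutant partial isometry between $p$ and $q$ never extends to a full-group unitary of the commutant unless the complementary sub-representations are themselves equivalent, and proving that equivalence is where one is forced to invoke the cancellation property of the convex-like structure (equivalently, its affine Banach-space model) rather than any purely von Neumann algebraic comparison of the projections $1-p$ and $1-q$.
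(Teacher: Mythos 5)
Your proof is correct, and both implications rest on the same two pillars as the paper's: for $(\Leftarrow)$ a conjugation computation through $\Phi_\lambda$, and for $(\Rightarrow)$ the cancellation property of the convex-like structure (via the Capraro--Fritz embedding) to pass from $[\Theta_p]=[\Theta_q]$ to $[\Theta_{1-p}]=[\Theta_{1-q}]$ --- exactly the step the paper justifies by metric compatibility and the proof of Corollary 6 of \cite{Ca-Fr}, so that step is no more and no less of an obstacle in your version than in the original. Where you diverge is in the assembly of the conjugating unitary. The paper takes conjugators $u_1,u_2$ of the two corners simultaneously, forms the single element $u=\Phi_\lambda(u_1\oplus u_2)$ --- which lies in $\ee^\omega$ automatically, being in the image of $\Phi_\lambda$ --- and verifies in one computation that $v_q^*u^*v_p$ commutes with $\Theta$ and carries $p$ to $q$. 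You instead isolate an ``engine'' that, from $[\Theta_p]=[\Theta_q]$ alone, produces a commutant partial isometry $v=hp$ with $h=w_2^*\Phi_\lambda(s\oplus 1)w_1$ (your corner identity $qh\Theta(g)h^*=q\Theta(g)$ is right, since the $a_\lambda$-corner of $\Phi_\lambda(x\oplus y)$ indeed depends only on $x$ in the geometric model of $\Phi_\lambda$), apply it to both $(p,q)$ and $(1-p,1-q)$, and glue $u=v+v'$. This costs you one extra verification that the paper's route avoids entirely: that the glued unitary is a genuine element of $\ee^\omega$ and not merely a unitary of $M(E)^\omega$. Your appeal to ``gluing pseudogroup partial isometries along a partition'' is precisely Lemma \ref{permutations} of the paper, applied with the projections $p,1-p$ and the full-group elements $h,h'$ --- the same device the paper deploys in Proposition \ref{full group} --- and you should cite it rather than treat it as self-evident. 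In exchange, your decomposition buys a reusable intermediate statement (equivalence of cut-down classes yields a commutant partial isometry in the normalising pseudogroup between the projections themselves), which is closer in spirit to Brown's original Proposition 3.3.4; and your $(\Leftarrow)$ direction, which feeds the conjugator $wu^*$ into the well-definedness of $\Theta_q^{(\cdot)}$, is a little slicker than the paper's explicit computation with $u_1=T_1(\Phi_\lambda^{-1}(v_quv_p^*))$.
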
 
\begin{proof}
Let $\lambda=Tr(p)=Tr(q)$ and let $v_p,v_q\in\ee^\omega$ be such that $v_ppv_p^*=a_\lambda=v_qqv_q^*$.
Let $u\in\Theta'\cap\ee^\omega$ be such that $upu^*=q$. Then:
\[(v_quv_p^*)a_\lambda(v_quv_p^*)^*=(v_qu)p(v_qu)^*=v_qqv_q^*=a_\lambda,\]
so $(v_quv_p^*)$ commutes with $a_\lambda$. Let $u_1=T_1(\Phi_\lambda^{-1}(v_quv_p^*))$. Then:
\begin{align*}
u_1\Theta_p^{v_p}u_1^*=&T_1(\Phi_\lambda^{-1}(v_quv_p^*))T_1(\Phi_\lambda^{-1}(v_p\Theta v_p^*))T_1(\Phi_\lambda^{-1}(v_quv_p^*))^*\\
=&T_1[\Phi_\lambda^{-1}(v_quv_p^*v_p\Theta v_p^*(v_quv_p^*)^*)]=T_1[\Phi_\lambda^{-1}(v_q\Theta v_q^*)]=\Theta_q^{v_q}.
\end{align*}
It follows that $[\Theta_p]=[\Theta_q]$.

Assume now that $[\Theta_p]=[\Theta_q]$. By the axioms of the convex-like structures (metric compatibility, see also Proof of Corollary 6 from \cite{Ca-Fr}) it follows that also  $[\Theta_{1-p}]=[\Theta_{1-q}]$. Recall that $[\Theta_p]=[T_1(\Phi_\lambda^{-1}(v_p\Theta v_p^*))]$ and $[\Theta_{1-p}]=[T_2(\Phi_\lambda^{-1}(v_p\Theta v_p^*))]$. So there exists $u_1,u_2\in\ee^\omega$ such that:
\[u_1T_1(\Phi_\lambda^{-1}(v_q\Theta v_q^*))u_1^*=T_1(\Phi_\lambda^{-1}(v_p\Theta v_p^*))\mbox{ and }u_2T_2(\Phi_\lambda^{-1}(v_q\Theta v_q^*))u_2^*=T_2(\Phi_\lambda^{-1}(v_p\Theta v_p^*))\]
Let $u=\Phi_\lambda(u_1\oplus u_2)$. Then $u_1=T_1(\Phi_\lambda^{-1}(u))$ and $u_1=T_2(\Phi_\lambda^{-1}(u))$. We have:
\begin{align*}
v_p\Theta v_p^*=&\Phi_\lambda(\Phi_\lambda^{-1}(v_p\Theta v_p^*))=\Phi_\lambda[T_1(\Phi_\lambda^{-1}(v_p\Theta v_p^*))\oplus T_2(\Phi_\lambda^{-1}(v_p\Theta v_p^*))]\\
=&\Phi_\lambda[u_1T_1(\Phi_\lambda^{-1}(v_q\Theta v_q^*))u_1^*\oplus u_2T_2(\Phi_\lambda^{-1}(v_q\Theta v_q^*))u_2^*]\\
=&\Phi_\lambda[T_1(\Phi_\lambda^{-1}(uv_q\Theta v_q^*u^*))\oplus T_2(\Phi_\lambda^{-1}(uv_q\Theta v_q^*u^*))]\\
=&\Phi_\lambda[\Phi_\lambda^{-1}(uv_q\Theta v_q^*u^*)]=uv_q\Theta v_q^*u^*
\end{align*}
We proved that $v_q^*u^*v_p$ commutes with $\Theta$. As $u$ is in the image of $\Phi_\lambda$ it commutes with $a_\lambda$. It follows that:
\[(v_q^*u^*v_p)p(v_q^*u^*v_p)^*=(v_q^*u^*)a_\lambda(v_q^*u^*)^*=v_q^*a_\lambda v_q=q.\]
\end{proof}

\subsection{Actions on the Loeb space}

In $Sof(G,\ee^\omega)$ there is no need for amplifications. Another difference is that we consider only those elements of the Loeb space that commute with the diffuse sofic representation.

\begin{nt}
For a diffuse sofic representation $\Theta:G\to\ee^\omega$ denote by $\gamma(\Theta)$ the action of $\Theta'\cap\ee^\omega$ on $\Theta'\cap A^\omega$, defined by $\gamma(u)(a)=uau^*$.
\end{nt} 
 
 The following lemma is easy, but it is one of the few tools that allows us to construct permutations. This is why it is so important. It was used in \cite{Pa1} and \cite{Pa2} (Lemma 1.6 in both articles, by a strange coincidence that I am noticing now). Here we need the diffuse version of this lemma. The proof is still the same, using Theorem \ref{dense permutation}.

 \begin{lemma}\label{permutations}
Let $\{p_i:i\in\nz\}$ be projections in $A^\omega$ such that $\sum_ip_i=1$. Let $\{u_i:i\in\nz\}$ be unitary elements in $\ee^\omega$ such that $\sum_iu_i^*p_iu_i=1$. Then
$v=\sum_iu_ip_i$ is an element in $\ee^\omega$.
\end{lemma}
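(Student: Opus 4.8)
The plan is to deduce everything from Theorem~\ref{dense permutation}, pulling the abelian data and the unitaries back to genuine permutation matrices and then realising $v$ at finite level as an honest permutation up to a vanishing error. Concretely, I would first apply Theorem~\ref{dense permutation} to the countable families $\{u_i\}_i\subset\ee^\omega$ and $\{p_i\}_i\subset A^\omega$ to obtain a sequence $\{n_k\}_k$, elements $v_i\in\Pi_{k\to\omega}P_{n_k}$ and $b_i\in\Pi_{k\to\omega}D_{n_k}$ with $\Psi(v_i)=u_i$ and $\Psi(b_i)=p_i$. It then suffices to prove that the pull-back $\tilde v:=\sum_i b_iv_i$ lies in $\Pi_{k\to\omega}P_{n_k}$: since $\Psi$ is a $*$-homomorphism continuous in $\|\cdot\|_2$ and $\Psi(\Pi_{k\to\omega}P_{n_k})\subset\ee^\omega$, this gives $v=\Psi(\tilde v)\in\ee^\omega$. (I read the product so that $v=\sum_ip_iu_i$, i.e.\ the order for which $v^*v=\sum_iu_i^*p_iu_i$.)

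Next I would transfer the two hypotheses verbatim to the permutation picture. Because $\Psi$ is trace preserving it is isometric for $\|\cdot\|_2$ and injective on $\Pi_{k\to\omega}M_{n_k}$; hence $\Psi(b_i^2-b_i)=p_i^2-p_i=0$ and $\Psi(b_ib_j)=p_ip_j=0$ force the $b_i$ to be pairwise orthogonal projections, and isometry of $\Psi$ together with $\sum_{i\le N}p_i\to 1$ and $\sum_{i\le N}u_i^*p_iu_i\to 1$ in $\|\cdot\|_2$ gives $\sum_ib_i=1$ and $\sum_iv_i^*b_iv_i=1$ in $\Pi_{k\to\omega}M_{n_k}$. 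Orthogonality of the $b_i$ makes the partial sums of $\tilde v$ Cauchy (by Pythagoras, $\|\sum_{M<i\le N}b_iv_i\|_2^2=\sum_{M<i\le N}\mathrm{Tr}(b_i)$), so $\tilde v$ is well defined and $\Psi(\tilde v)=v$. Moreover $\tilde v^*\tilde v=\sum_iv_i^*b_iv_i=1$, so $\tilde v$ is an isometry; as $\Pi_{k\to\omega}M_{n_k}$ is a finite von Neumann algebra this already shows $\tilde v$ is unitary. The remaining, and essential, point is that $\tilde v$ is a \emph{permutation} unitary.

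For this I would descend to the finite level. Fix representatives $b_i=\Pi_{k\to\omega}p_{S_i^{(k)}}$ with $S_i^{(k)}\subset\{1,\dots,n_k\}$ and $v_i=\Pi_{k\to\omega}u_{\sigma_i^{(k)}}$ with $\sigma_i^{(k)}\in\mathrm{Sym}(n_k)$. The relation $\sum_ib_i=1$ says that for each fixed $N$ the sets $\{S_i^{(k)}\}_{i\le N}$ are $\omega$-almost disjoint and that their union has density $\to 1$ as $N\to\infty$; the relation $\sum_iv_i^*b_iv_i=1$ says the same for the preimage sets $\{(\sigma_i^{(k)})^{-1}(S_i^{(k)})\}_i$. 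Choosing $N_k\to\infty$ slowly by a diagonal argument, I can arrange that $\omega$-almost surely both $\{S_i^{(k)}\}_{i\le N_k}$ and $\{(\sigma_i^{(k)})^{-1}(S_i^{(k)})\}_{i\le N_k}$ are genuine partitions of $\{1,\dots,n_k\}$ off a set $E^{(k)}$ of density $\to 0$. On the complement of $E^{(k)}$ the rule $j\mapsto\sigma_{i(j)}^{(k)}(j)$, where $i(j)\le N_k$ is the unique index with $j\in(\sigma_{i(j)}^{(k)})^{-1}(S_{i(j)}^{(k)})$, is then a well-defined injection whose image misses only a density-$0$ set, and I extend it arbitrarily to a bijection $\tau^{(k)}\in\mathrm{Sym}(n_k)$. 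By construction $u_{\tau^{(k)}}$ agrees with $\sum_{i\le N_k}p_{S_i^{(k)}}u_{\sigma_i^{(k)}}$ off $E^{(k)}$, so $\|u_{\tau^{(k)}}-\sum_{i\le N_k}p_{S_i^{(k)}}u_{\sigma_i^{(k)}}\|_2\to_\omega 0$; since the tail $\sum_{i>N_k}b_iv_i$ also has $\|\cdot\|_2\to 0$, we obtain $\tilde v=\Pi_{k\to\omega}u_{\tau^{(k)}}\in\Pi_{k\to\omega}P_{n_k}$, and hence $v=\Psi(\tilde v)\in\ee^\omega$.

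I expect the main obstacle to be precisely the third paragraph: organising the two simultaneous limits. The partition-of-unity relations only hold in the $\omega$-limit and for each fixed truncation $N$, so one must choose $N_k\to\infty$ growing slowly enough, relative to the $\omega$-rates of the almost-disjointness, covering and preimage-covering statements, that all three hold simultaneously; and one must check that trimming the overlaps and filling in the uncovered density-$0$ remainder changes the truncated sum by a vanishing amount in $\|\cdot\|_2$. Everything else, namely the transfer of the relations and the unitarity of $\tilde v$, is soft once the injectivity and $\|\cdot\|_2$-isometry of $\Psi$ are invoked.
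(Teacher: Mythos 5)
Your proposal is correct and takes essentially the same route as the paper, whose entire proof is the remark that the argument is the same as Lemma 1.6 of \cite{Pa1} and \cite{Pa2}, transferred to the diffuse setting via Theorem \ref{dense permutation}: you pull the data back to $\Pi_{k\to\omega}P_{n_k}$ and $\Pi_{k\to\omega}D_{n_k}$ exactly as intended, and your third paragraph just writes out the finite-level patching that the paper outsources to that citation. Your reordering $v=\sum_i p_iu_i$ (so that $v^*v=\sum_iu_i^*p_iu_i$) is the correct reading of the stated hypothesis, and the tail estimate and diagonal choice of $N_k$ are handled properly.
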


\begin{p}\label{full group}
Let $\Theta$ be a diffuse sofic representation such that $\gamma(\Theta)$ is ergodic. Then if $p,q$ are projections in $\Theta'\cap A^\omega$ such that $Tr(p)=Tr(q)$ then there exists $u\in\Theta'\cap\ee^\omega$ such that $q=upu^*$.
\end{p}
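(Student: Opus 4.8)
The plan is to prove this by the standard argument that an ergodic full group acts transitively on projections of equal trace, transported to the relative commutants $\Theta'\cap A^\omega$ and $\Theta'\cap\ee^\omega$; here ergodicity of $\gamma(\Theta)$ means that the only projections of $\Theta'\cap A^\omega$ fixed under conjugation by every element of $\Theta'\cap\ee^\omega$ are $0$ and $1$.

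The one place the hypothesis enters, and the crux of the argument, is the following consequence of ergodicity: for any two nonzero projections $p_0,q_0\in\Theta'\cap A^\omega$ there is $w\in\Theta'\cap\ee^\omega$ with $wp_0w^*\wedge q_0\neq 0$. Indeed, if no such $w$ existed then $R=\bigvee\{wp_0w^*:w\in\Theta'\cap\ee^\omega\}$ would be orthogonal to $q_0$, hence $R\leq 1-q_0\neq 1$; but $R\geq p_0\neq 0$ and $R$ is invariant under conjugation by $\Theta'\cap\ee^\omega$ (reindexing the supremum), contradicting ergodicity. Note that $wp_0w^*$ again lies in $\Theta'\cap A^\omega$ because $\ee^\omega$ normalises $A^\omega$ and $w,p_0\in\Theta'$, and in the abelian algebra $A^\omega$ non-orthogonality yields a genuine common subprojection $r=wp_0w^*q_0$, which in turn lies in $\Theta'\cap A^\omega$ and satisfies $w^*rw\leq p_0$.

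Next I would run an exhaustion argument. By Zorn's lemma pick a maximal family $\{(p_i,q_i,u_i)\}$ with $p_i\leq p$ and $q_i\leq q$ mutually orthogonal subprojections in $\Theta'\cap A^\omega$, $u_i\in\Theta'\cap\ee^\omega$ and $u_ip_iu_i^*=q_i$; such a family is automatically countable, since mutually orthogonal nonzero projections have positive traces summing to at most $1$. Conjugation by the $u_i\in\ee^\omega$ preserves $Tr$, so $Tr(\sum_ip_i)=Tr(\sum_iq_i)$, whence $Tr(p-\sum_ip_i)=Tr(q-\sum_iq_i)$. If the leftover $p-\sum_ip_i$ were nonzero, then so would be $q-\sum_iq_i$, and applying the ergodicity step to this pair would produce a new triple disjoint from the family, contradicting maximality. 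Hence $\sum_ip_i=p$ and $\sum_iq_i=q$. Running the identical construction on the pair $1-p,\,1-q$ (again in $\Theta'\cap A^\omega$ and of equal trace) gives mutually orthogonal decompositions $\sum_j\tilde p_j=1-p$ and $\sum_j\tilde q_j=1-q$ with $w_j\in\Theta'\cap\ee^\omega$ and $w_j\tilde p_jw_j^*=\tilde q_j$.

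Finally I would assemble the conjugating permutation. The source projections $\{p_i\}\cup\{\tilde p_j\}$ sum to $1$ and the range projections $\{q_i\}\cup\{\tilde q_j\}$ sum to $1$, which is precisely the tiling data that Lemma \ref{permutations} turns into an element $u=\sum_iu_ip_i+\sum_jw_j\tilde p_j$ of $\ee^\omega$. Since $p_i\leq p$ and $\tilde p_j\leq 1-p$ one computes $up=\sum_iu_ip_i$ and hence $upu^*=\sum_iu_ip_iu_i^*=\sum_iq_i=q$ by orthogonality of the pieces. As each $p_i,\tilde p_j\in\Theta'$ and each $u_i,w_j\in\Theta'$, every summand commutes with every $\Theta(g)$, so $u\in\Theta'$, giving the desired $u\in\Theta'\cap\ee^\omega$ with $upu^*=q$. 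The main obstacle is the ergodicity step: once the orbit of a nonzero projection is shown to meet every other nonzero projection, the exhaustion and the patching via Lemma \ref{permutations} are routine.
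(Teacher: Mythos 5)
Your proof is correct and follows essentially the same path as the paper: ergodicity produces a translate $wp_0w^*$ overlapping $q_0$, a maximality argument yields matched orthogonal decompositions $\sum_i p_i=p$, $\sum_i q_i=q$ with $u_ip_iu_i^*=q_i$, and Lemma \ref{permutations} patches the pieces into an element of $\ee^\omega$ commuting with $\Theta$. The only (harmless) divergence is in the final assembly: the paper first reduces to the disjoint case $pq=0$ and extends the partial swap $v=\sum_i u_ip_i$ by the identity, taking $u=(1-p-q)+v+v^*$, whereas you avoid that reduction by running the exhaustion a second time on the pair $1-p$, $1-q$ and feeding the combined tiling data into the same lemma.
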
 
 \begin{proof}
 Assume first that $pq=0$ (the underlying sets, on which $p$ and $q$ are projecting, are disjoint). We want to construct a partial isometry $v$ such that $vpv^*=q$. As $\gamma(\Theta)$ is ergodic there exists $u\in\Theta'\cap\ee^\omega$ such that $upu^*\cdot q\neq 0$. By a maximality argument we can construct projections $\{p_i\}_i$ and $\{q_i\}_i$ in $\Theta'\cap A^\omega$ and unitaries $\{u_i\}_i$ in $\Theta'\cap\ee^\omega$ such that $\sum_ip_i=p$, $\sum_iq_i=q$ and $u_ip_iu_i^*=q_i$ for any $i$.
 
 Define $v=\sum_iu_ip_i$. It is easy to check that $vpv^*=q$, $vv^*=p$ and $v^*v=q$. Then $u=(1-p-q)+v+v^*$ is a unitary commuting with $\Theta$ such that $upu^*=q$. The proof is algebraic, but there is a lot of geometry behind the scene. The unitary $u$ is sending the underlying set of $p$ onto the underlying set of $q$ and vice-versa, while being identity on the rest of the space.
 
 In order to prove that $u\in\ee^\omega$, use the previous lemma with $\{p_i\}_i\cup\{q_i\}_i\cup\{1-p-q\}$ as the set of projections and $\{u_i\}_i\cup\{u_i^*\}_i\cup\{Id\}$ as the set of unitaries.
 
 If $pq\neq 0$, replace $p$ and $q$ by $p_1=p-pq$ and $q_1=q-pq$.
 \end{proof}

\subsection{The main result}

\begin{p}
Let $\Theta:G\to\ee^\omega$ be a sofic representation. Then $[\Theta]$ is an extreme point in $Sof(G,\ee^\omega)$ if and only if $[\Theta]=[\Theta_p]$ for any projection $p\in \Theta(G)'\cap A^\omega$.
\end{p}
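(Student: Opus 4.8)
The plan is to prove both implications through a single bridge identity: for every projection $p \in \Theta(G)' \cap A^\omega$ with $\lambda = Tr(p)$,
\[[\Theta]_\ee = \lambda[\Theta_p]_\ee + (1-\lambda)[\Theta_{1-p}]_\ee.\]
I would establish this first. Picking $u \in \ee^\omega$ with $upu^* = a_\lambda$, the conjugate $u\Theta u^*$ commutes with $a_\lambda$, so Theorem \ref{image} lets me write $u\Theta u^* = \Phi_\lambda(\Theta_1 \oplus \Theta_2)$. By the definition of the cut representation, $\Theta_1 = \Theta_p^u$, so $[\Theta_1]_\ee = [\Theta_p]_\ee$; the lemma stated just before Proposition \ref{anti amplification} identifies $[\Theta_2]_\ee = [T_2(\Phi_\lambda^{-1}(u\Theta u^*))]_\ee$ with $[\Theta_{1-p}]_\ee$. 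Since conjugation by $u$ leaves the class unchanged and $[\Phi_\lambda(\Theta_1 \oplus \Theta_2)]_\ee = \lambda[\Theta_1]_\ee + (1-\lambda)[\Theta_2]_\ee$ by definition of the convex combination, the identity follows.

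For the direction ``extreme $\Rightarrow$ $[\Theta] = [\Theta_p]$'' I would fix $p \in \Theta(G)' \cap A^\omega$. Because the trace on $A^\omega$ is faithful, $Tr(p) \in \{0,1\}$ forces $p \in \{0,1\}$; the case $p = 1$ gives $[\Theta_1]_\ee = [\Theta]_\ee$ outright, so I may assume $\lambda \in (0,1)$. The bridge identity then displays $[\Theta]$ as the proper convex combination $\lambda[\Theta_p]_\ee + (1-\lambda)[\Theta_{1-p}]_\ee$, and extremality forces both summands to equal $[\Theta]$; in particular $[\Theta]_\ee = [\Theta_p]_\ee$.

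For the converse I would assume $[\Theta] = [\Theta_p]$ for all such $p$ and take any decomposition $[\Theta]_\ee = \lambda[\Theta_1]_\ee + (1-\lambda)[\Theta_2]_\ee$ with $\lambda \in (0,1)$. Proposition \ref{isolating summand} yields a projection $p \in \Theta(G)' \cap A^\omega$ with $Tr(p) = \lambda$ and $[\Theta_p]_\ee = [\Theta_1]_\ee$, so the hypothesis gives $[\Theta_1]_\ee = [\Theta_p]_\ee = [\Theta]_\ee$. Re-running the argument on the commuted combination $[\Theta]_\ee = (1-\lambda)[\Theta_2]_\ee + \lambda[\Theta_1]_\ee$ (using commutativity of the convex structure) produces a second projection $q$ with $[\Theta_q]_\ee = [\Theta_2]_\ee$, whence $[\Theta_2]_\ee = [\Theta]_\ee$. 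Thus every proper decomposition is trivial and $[\Theta]$ is extreme.

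I expect the main obstacle to be the bridge identity rather than either implication, which become short once it is available. The delicate point is confirming that the complementary corner $T_2(\Phi_\lambda^{-1}(u\Theta u^*))$ genuinely represents $[\Theta_{1-p}]$, precisely the content of the lemma before Proposition \ref{anti amplification}; a secondary check is that the projection supplied by Proposition \ref{isolating summand} really lies in $\Theta(G)' \cap A^\omega$ so that the hypothesis applies, together with the degenerate boundary cases $\lambda \in \{0,1\}$.
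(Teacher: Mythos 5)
Your proof is correct and follows the paper's intended route: the paper dismisses this proposition in one line as a consequence of Proposition \ref{isolating summand}, and your argument just makes that explicit — Proposition \ref{isolating summand} for the converse, and for the forward direction the bridge identity $[\Theta]_\ee=\lambda[\Theta_p]_\ee+(1-\lambda)[\Theta_{1-p}]_\ee$, assembled exactly as the paper's machinery intends from Theorem \ref{image}, the definition of $\Theta_p^u$, and the lemma preceding Proposition \ref{anti amplification}. The only detail both you and the paper leave implicit is that the two corners $\Theta_1,\Theta_2$ of $\Phi_\lambda^{-1}(u\Theta u^*)$ are again \emph{sofic} representations, which holds because traces of elements of $\ee^\omega$ are nonnegative, so $\lambda Tr(\Theta_1(g))+(1-\lambda)Tr(\Theta_2(g))=0$ forces both terms to vanish for $g\neq e$.
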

\begin{proof}
This is just a consequence of Proposition \ref{isolating summand}.
\end{proof}

\begin{te}(Analog of Proposition 5.2 of \cite{Br})
Let $\Theta:G\to\ee^\omega$ be a sofic representation. Then $[\Theta]$ is an extreme point in $Sof(G,\ee^\omega)$ if and only if the action $\gamma(\Theta)$ is ergodic.
\end{te}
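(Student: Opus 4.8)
The plan is to reduce everything to the criterion established in the Proposition immediately preceding this theorem: $[\Theta]$ is extreme if and only if $[\Theta_p]=[\Theta]$ for every nonzero projection $p\in\Theta(G)'\cap A^\omega$. Recall that ergodicity of $\gamma(\Theta)$ means precisely that the only projections of $\Theta'\cap A^\omega$ fixed by the conjugation action of $\Theta'\cap\ee^\omega$ are $0$ and $1$. I would prove the two implications separately, the key tools being the transitivity supplied by Proposition \ref{full group}, the equivalence criterion of Proposition \ref{P3.3.4}, the reference projections produced by Proposition \ref{anti amplification}, and the diffuseness of $\Theta'\cap A^\omega$ from Lemma \ref{diffuse abelian}.

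For the direction \emph{$\gamma(\Theta)$ ergodic $\Rightarrow$ $[\Theta]$ extreme}, I would fix a projection $p\in\Theta'\cap A^\omega$ with $Tr(p)=\lambda$ and show $[\Theta_p]=[\Theta]$. The case $\lambda=1$ forces $p=1$ and is trivial, so assume $\lambda\in(0,1)$. By Proposition \ref{anti amplification} there is a projection $p'$, commuting with $\Theta$, with $Tr(p')=\lambda$ and $[\Theta_{p'}]=[\Theta]$. Since $\gamma(\Theta)$ is ergodic and $Tr(p)=Tr(p')$, Proposition \ref{full group} produces $u\in\Theta'\cap\ee^\omega$ with $up'u^*=p$; feeding this into Proposition \ref{P3.3.4} gives $[\Theta_p]=[\Theta_{p'}]=[\Theta]$, as wanted.

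For the converse I argue the contrapositive: if $\gamma(\Theta)$ is not ergodic, then $[\Theta]$ is not extreme. Non-ergodicity yields a nontrivial fixed projection $p_0\in\Theta'\cap A^\omega$, i.e. $up_0u^*=p_0$ for all $u\in\Theta'\cap\ee^\omega$, with $\mu=Tr(p_0)\in(0,1)$. Using the diffuseness of $\Theta'\cap A^\omega$ (Lemma \ref{diffuse abelian}) I would build a second projection $q\in\Theta'\cap A^\omega$ with $Tr(q)=\mu$ but $q\neq p_0$, for instance $q=e+f$ with $e\leq p_0$ and $f\leq 1-p_0$ nonzero, subprojections chosen inside $\Theta'\cap A^\omega$ so that $Tr(e)+Tr(f)=\mu$. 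Because the $\gamma$-orbit of $p_0$ is the singleton $\{p_0\}$, no $u\in\Theta'\cap\ee^\omega$ satisfies $up_0u^*=q$, so Proposition \ref{P3.3.4} forces $[\Theta_{p_0}]\neq[\Theta_q]$. These two classes cannot both equal $[\Theta]$, hence at least one of $p_0,q$ is a projection whose cut-down has class different from $[\Theta]$, and the criterion above shows $[\Theta]$ is not extreme.

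The main obstacle is this second implication: unlike Brown's factor argument, the relevant algebra $\Theta'\cap A^\omega$ here is abelian, so one cannot read off non-extremality from a central projection directly. What makes it work is combining the rigidity of a fixed projection (its $\gamma$-orbit is a single point, so Proposition \ref{P3.3.4} forbids equivalence to anything else of the same trace) with the diffuseness of $\Theta'\cap A^\omega$, which guarantees that a competitor $q$ of equal trace actually exists. Verifying that $q$ can be chosen inside the commutant $\Theta'\cap A^\omega$, and not merely in $A^\omega$, is the one genuinely delicate bookkeeping step, and it is exactly what Lemma \ref{diffuse abelian} is designed to provide.
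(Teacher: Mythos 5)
Your proposal is correct and follows essentially the same route as the paper: the forward direction uses exactly the paper's chain (Proposition \ref{anti amplification} to produce a reference projection of the given trace, Proposition \ref{full group} to conjugate it, Proposition \ref{P3.3.4} to conclude), with Proposition \ref{isolating summand} absorbed into the extremality criterion of the preceding proposition. Your contrapositive for the converse is just an expanded version of the paper's argument, spelling out the step the paper compresses into its final sentence, namely that a $\gamma(\Theta)$-fixed nontrivial projection together with the diffuseness of $\Theta'\cap A^\omega$ (Lemma \ref{diffuse abelian}) yields a same-trace competitor that cannot be conjugated to it, contradicting the transitivity forced by extremality.
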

\begin{proof}
 Assume that $[\Theta]_\ee=\lambda[\Theta_1]_\ee+(1-\lambda)[\Theta_2]_\ee$. Then by Proposition \ref{isolating summand} there exists $p\in\Theta'\cap A^\omega$ a projection with $Tr(p)=\lambda$ such that $[\Theta_p]_\ee=[\Theta_1]_\ee$. Also by Proposition \ref{anti amplification} there exists $q\in\Theta'\cap A^\omega$ a projection with $Tr(q)=\lambda$ such that $[\Theta_q]_\ee=[\Theta]_\ee$. If $\gamma(\Theta)$ is ergodic then by Proposition \ref{full group} there exists $u\in\Theta'\cap\ee^\omega$ such that $upu^*=q$. Use now Proposition \ref{P3.3.4} to deduce that $[\Theta_p]_\ee=[\Theta_q]_\ee$. It follows that $[\Theta]_\ee=[\Theta_1]_\ee$ proving that $[\Theta]_\ee$ is an extreme point.
 
For the converse, let $p,q\in\Theta'\cap A^\omega$ be two projections such that $Tr(p)=Tr(q)$. By the previous proposition $[\Theta_p]_\ee=[\Theta_q]_\ee$. Then, by Proposition \ref{P3.3.4} there exists $u\in\Theta'\cap\ee^\omega$ such that $q=upu^*$. This is enough to deduce the ergodicity of $\gamma(\Theta)$ as $\Theta'\cap A^\omega$ is diffuse (Lemma \ref{diffuse abelian}).
\end{proof}

\begin{ob}
The convex-like structures $Sof(G,P^\omega)$ and $Sof(G,\ee^\omega)$ are isomorphic. This means that the extreme points constructed in Section 2.6 of \cite{Pa2} are still valid for $Sof(G,\ee^\omega)$. The existence of extreme points for any sofic group remains however an open question.
\end{ob}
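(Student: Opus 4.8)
The plan is to show that the bijection $A:Sof(G,P^\omega)\to Sof(G,\ee^\omega)$, already established above to be a bijective isometry, is moreover \emph{affine}, i.e. that it intertwines the two convex combination operations. In the sense of Capraro--Fritz (\cite{Ca-Fr}), a convex-like structure consists of a metric together with the binary combination maps $\lambda[\cdot]+(1-\lambda)[\cdot]$ subject to the relevant axioms, and an isomorphism of such structures is exactly a bijective isometry preserving these combinations. Since bijectivity and the isometry property are already in hand, the only remaining content of the statement is affineness of $A$.

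The key step is the identity
\[A\big(\lambda[\Theta_1]_P+(1-\lambda)[\Theta_2]_P\big)=\lambda\, A([\Theta_1]_P)+(1-\lambda)\, A([\Theta_2]_P)\]
for sofic representations $\Theta_1,\Theta_2$, which I would prove by unwinding both sides. On the left, the convex combination in $Sof(G,P^\omega)$ from \cite{Pa2} is the amplify-then-direct-sum operation: choosing $\{r_k\},\{s_k\}$ with $\lim_{k\to\omega}r_k/(r_k+s_k)=\lambda$, one has $\lambda[\Theta_1]_P+(1-\lambda)[\Theta_2]_P=[(\Theta_1\otimes 1_{r_k})\oplus(\Theta_2\otimes 1_{s_k})]_P$, so applying $A$ gives $[\Psi((\Theta_1\otimes 1_{r_k})\oplus(\Theta_2\otimes 1_{s_k}))]_\ee$. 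On the right, by definition of the convex structure on $Sof(G,\ee^\omega)$, one has $\lambda[\Psi\circ\Theta_1]_\ee+(1-\lambda)[\Psi\circ\Theta_2]_\ee=[\Phi_\lambda((\Psi\circ\Theta_1)\oplus(\Psi\circ\Theta_2))]_\ee$, and $\Phi_\lambda$ was defined precisely by lifting to permutations, amplifying by such $\{r_k\},\{s_k\}$, forming the direct sum, and pushing forward by $\Psi$. Taking as lifts the representations $\Theta_1,\Theta_2$ themselves, the definition of $\Phi_\lambda$ yields exactly $\Phi_\lambda((\Psi\circ\Theta_1)\oplus(\Psi\circ\Theta_2))=\Psi((\Theta_1\otimes 1_{r_k})\oplus(\Theta_2\otimes 1_{s_k}))$. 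Hence both sides coincide and $A$ is affine.

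Having shown $A$ to be an affine bijective isometry, the isomorphism of convex-like structures follows; in the Banach-space picture supplied by \cite{Ca-Fr}, each space is a bounded closed convex subset of a Banach space, and $A$ extends to an affine isometry of the closed affine spans. Since an isomorphism of convex-like structures carries extreme points bijectively to extreme points, the extreme points of $Sof(G,P^\omega)$ produced in Section 2.6 of \cite{Pa2} are sent by $A$ to extreme points of $Sof(G,\ee^\omega)$, which gives the second assertion; the final sentence about existence for general sofic groups is only a remark and needs no argument.

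The main obstacle is the bookkeeping in the key step: $\Theta_1$ and $\Theta_2$ may live over different dimension sequences, and the definition of $\Phi_\lambda$ a priori uses arbitrary permutation lifts furnished by Theorem \ref{dense permutation} together with arbitrary amplifying sequences. To make the identification rigorous one must invoke the independence of $\Phi_\lambda(u\oplus v)$ from the chosen lifts and from the particular $\{r_k\},\{s_k\}$ (so long as the ratio converges to $\lambda$), which itself rests on the ultraproduct identity $\Psi(x)=\Psi(x\otimes 1)$ recorded after Theorem \ref{image}. Once that independence is secured, the freedom to take $\Theta_1,\Theta_2$ themselves as the lifts collapses both sides to the same element, and the rest is the routine verification already anticipated by the paper's remark that the $\ee^\omega$-convex structure is nothing but the transport of the $P^\omega$-one via $A$.
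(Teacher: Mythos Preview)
Your proposal is correct and follows the same route the paper takes, only with more detail: the paper does not give a separate proof of this observation but simply relies on the earlier remark (just after the definition of the convex structure on $Sof(G,\ee^\omega)$) that ``this definition is just the old convex structure on $Sof(G,P^\omega)$ transported on $Sof(G,\ee^\omega)$ via the map $A$.'' Your argument is exactly the verification of that remark---unwinding $\Phi_\lambda$ to see that $A$ intertwines the two convex-combination operations---so you are filling in what the paper leaves as a tautology by construction.
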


\section{Sofic representations that cannot be extended}

Let $G=\zz*\zz_2=<a,b:b^2=e>$ and $c=bab$. Then $\ff_2=<a,c>$ is a copy of the free group inside $G$. Let $R:Sof(G,P^\omega)\to Sof(\ff_2,P^\omega)$ be the restriction map $R([\Theta])=[\Theta|_{\ff_2}]$. In this section we show that $R$ is not surjective.

It is quite easy to show that most of the sofic representations $\Theta:\ff_2\to\Pi_{k\to\omega}P_{n_k}$ can not be extended to a sofic representation $\tilde\Theta:G\to\Pi_{k\to\omega}P_{n_k}$ (same sequence of dimensions). A sofic representation of $\ff_2$ is obtained by choosing to sequences of $n_k$-cycles. A sofic representation of $G$ is obtained by choosing to sequences of $n_k$-cycles that are (almost) conjugated by an element of order $2$. As a relatively low number of pairs of cycles are conjugated by an element of order two, it follows that most $\Theta:\ff_2\to\Pi_{k\to\omega}P_{n_k}$ cannot be extended to $G$.

However, when studying the function $R$, we must take into consideration amplifications. Indeed, there are sofic representations $\Theta$ that are not extendable, but they have amplifications that are. For example, assume that there exist an element $y\in\Pi_{k\to\omega}P_{n_k}$ such that $y\Theta(a)y^{-1}=\Theta(c)$ and $y^2\Theta(a)=\Theta(a)y^2$. Then, one can check that, if $\tilde y=
  \left[ {\begin{array}{cc}
   0 & y \\
   y^{-1} & 0 \\
  \end{array} } \right]$, then $\tilde y^2=Id$ and $\tilde y(\Theta(a)\otimes 1_2)\tilde y^{-1}=\Theta(c)\otimes 1_2$.
  
I'm quite sure that the existence of such an element $y\in\Pi_{k\to\omega}P_{n_k}$ is equivalent to the fact that $\Theta\otimes 1_2$ is extendable to $G$. We don't need this result. We shall prove that when $\Theta$ is an expander, which is know to happen most of the time, this is the only phenomena that may make an amplification of $\Theta$ expandable.

\subsection{Hamming distance on matrices}

\begin{de}
For $x,y\in M_n(\cz)$ define the \emph{Hamming distance on matrices}:
\[d_H(x,y)=\frac1n|\{i:\exists j\ x(i,j)\neq y(i,j)\}|.\]
\end{de}
The formula counts the number of rows that are different in $x,y$. Note that if $x,y\in P_n$ then this distance is the usual Hamming distance on a symmetric group.

\begin{de}
We call a matrix $q\in M_n$ a \emph{pice of permutation} if $q$ has only $0$ and $1$ entries and no more than one entry of $1$ on each row and each column. Alternatively $q=pa$, where $p\in P_n$ and $a$ is a projection in $D_n$.
\end{de}

\begin{p}
Let $x,y\in M_n$ and $p\in P_n$. Then $d_H(x,y)=d_H(px,py)=d_H(xp,yp)$. Instead, if $p$ is a pice of permutation then:
\[d_H(px,py)\leqslant d_H(x,y)\mbox{ and }d_H(xp,yp)\leqslant d_H(x,y).\]
\end{p}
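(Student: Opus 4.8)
The plan is to read off everything from how matrix multiplication acts on rows, since $d_H(x,y)$ counts exactly the rows on which $x$ and $y$ disagree. The two basic facts to isolate are: left multiplication by a permutation matrix $p\in P_n$ permutes the rows (row $i$ of $px$ is row $p^{-1}(i)$ of $x$), while right multiplication by $p$ permutes the entries within each row (a column permutation). For the piece-of-permutation statements I would use the factorization $q=pa$ with $p\in P_n$ and $a$ a diagonal projection in $D_n$, so that the two inequalities reduce to the effect of $a$ after the permutation part has been absorbed by the equalities.

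First I would prove the two equalities. For left multiplication by $p\in P_n$, since row $i$ of $px$ is row $p^{-1}(i)$ of $x$, the map $i\mapsto p^{-1}(i)$ is a bijection carrying $\{i:\text{row }i\text{ of }px\neq\text{row }i\text{ of }py\}$ onto $\{k:\text{row }k\text{ of }x\neq\text{row }k\text{ of }y\}$; the two sets have the same cardinality, so $d_H(px,py)=d_H(x,y)$. For right multiplication, row $i$ of $xp$ is obtained from row $i$ of $x$ by permuting its entries, hence rows $i$ of $xp$ and $yp$ agree if and only if rows $i$ of $x$ and $y$ agree; the two sets of differing rows literally coincide, giving $d_H(xp,yp)=d_H(x,y)$.

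Next I would handle a piece of permutation $q=pa$. For the left inequality, write $qx=p(ax)$ and $qy=p(ay)$, so by the equality just proved $d_H(qx,qy)=d_H(ax,ay)$. Now $ax$ is $x$ with the rows $k$ satisfying $a(k,k)=0$ zeroed out, and the same rows are zeroed in $ay$; a row that is zeroed in both matrices agrees trivially, so $\{k:\text{row }k\text{ of }ax\neq\text{row }k\text{ of }ay\}\subseteq\{k:\text{row }k\text{ of }x\neq\text{row }k\text{ of }y\}$, whence $d_H(ax,ay)\leq d_H(x,y)$. For the right inequality, write $xq=(xp)a$ and $yq=(yp)a$; right multiplication by $a$ zeroes out the columns $j$ with $a(j,j)=0$ in both $xp$ and $yp$. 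Zeroing the same set of columns in two matrices can only turn a pair of differing rows into a pair of equal rows (when every disagreeing column is among those zeroed), never the reverse, so $d_H((xp)a,(yp)a)\leq d_H(xp,yp)$, and the latter equals $d_H(x,y)$ by the permutation equality.

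I expect the only step requiring genuine care to be this last inequality: unlike the left case, where the projection simply deletes whole rows that then agree automatically, right multiplication by $a$ removes entries \emph{inside} each row, so I must argue explicitly that restricting attention to a subset of columns can only merge differing rows and never separate agreeing ones. Everything else is straightforward bookkeeping of which rows survive as differing under the relevant row or column permutation, and the decomposition $q=pa$ keeps the permutation and projection contributions cleanly separated.
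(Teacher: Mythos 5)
Your proof is correct, and it is essentially the argument the paper intends: the proposition is stated there without proof as a routine verification, the factorization $q=pa$ you rely on is supplied by the paper's own definition of a piece of permutation, and the bookkeeping you carry out (left multiplication by $p\in P_n$ permutes rows bijectively, right multiplication permutes entries within each row, and the diagonal projection $a$ only zeroes rows or columns, hence can merge differing rows but never separate agreeing ones) is exactly the expected route. You also correctly isolated the one genuinely asymmetric step, namely that right multiplication by $a$ acts inside rows so that $d_H(xpa,ypa)\leqslant d_H(xp,yp)=d_H(x,y)$ is only an inequality, which is precisely why the statement distinguishes the permutation case from the piece-of-permutation case.
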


The following lemma is the key of the proof. From the existence of an element $y\in P_{nr}$ with some properties, we interfere the existence of an element $w\in P_n$ with similar properties. These type of results we are looking for.

\begin{lemma}\label{main lemma}
Let $x,z\in P_n$ and $y\in P_{nr}$ be such that $y^2=Id_{nr}$ and $d_H(y(x\otimes 1_r),(z\otimes 1_r)y)<\vp$. Assume that for any projection $p\in D_n$, $Tr(p)<1/2$ implies $\lambda Tr(p)<d_H(p,xpx^*)+d_H(p,zpz^*)$. Then there exists $w\in P_n$ such that $d_H(wx,zw)<72\vp/\lambda$ and $d_H(xw,wz)<72\vp/\lambda$. 
\end{lemma}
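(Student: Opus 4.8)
The plan is to read the amplified conjugacy one \emph{fibre} at a time. Identify the index set of $P_{nr}$ with $[n]\times[r]$ so that $x\otimes1_r$ and $z\otimes1_r$ act only on the first coordinate, and for each level $s\in[r]$ define $\phi_s\colon[n]\to[n]$ by letting $\phi_s(i)$ be the first coordinate of $y(i,s)$. Comparing first coordinates in the assumption $d_H(y(x\otimes1_r),(z\otimes1_r)y)<\ve$ shows that $\phi_s\circ x=z\circ\phi_s$ off a set $E_s\subseteq[n]$, and summing the exceptional pairs $(i,s)$ gives $\sum_s|E_s|\le \ve nr$. The involution hypothesis enters in an essential way here: since $y^2=Id$ and $d_H$ is bi-invariant, the assumption is equivalent to $d_H(y(z\otimes1_r),(x\otimes1_r)y)<\ve$, which by the same reading gives a \emph{second} relation $\phi_s\circ z=x\circ\phi_s$ off a set $E_s'$ with $\sum_s|E_s'|\le\ve nr$. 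Thus each $\phi_s$ approximately intertwines $x$ with $z$ on \emph{both} sides.

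The reason for wanting both relations is that the symmetric expansion hypothesis then becomes usable. Set $h_s(j)=|\phi_s^{-1}(j)|$, the fibre-size function; it is a nonnegative integer function of mean $1$, and $\phi_s$ is a bijection exactly when $h_s\equiv1$, the defect being measured by $|V_s|:=|[n]\setminus\phi_s([n])|=\sum_j(h_s(j)-1)^+$. A direct index count from $\phi_s x=z\phi_s$ off $E_s$ yields $\sum_j|h_s(z(j))-h_s(j)|\le2|E_s|$, and symmetrically $\phi_s z=x\phi_s$ gives $\sum_j|h_s(x(j))-h_s(j)|\le2|E_s'|$; that is, $h_s$ is quasi-invariant under \emph{both} $x$ and $z$. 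Writing each of these $\ell^1$ defects by the layer-cake formula as $\int_0^\infty|S_t\,\triangle\,x(S_t)|\,dt$, respectively $\int_0^\infty|S_t\,\triangle\,z(S_t)|\,dt$, with $S_t=\{h_s>t\}$, and feeding the level sets into the hypothesis $\lambda Tr(p)<d_H(p,xpx^*)+d_H(p,zpz^*)$ (which for $p=\chi_S$ reads $\lambda|S|<|S\triangle x(S)|+|S\triangle z(S)|$), I obtain $\lambda|V_s|\le2(|E_s|+|E_s'|)$ for every level $s$ whose error is not too large. The constraint $Tr(p)<1/2$ is handled by passing to complements together with a crude Markov bound forcing $|V_s|<n/2$ in the relevant range; one only ever needs a single good level.

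It then remains to select a level and round. Averaging the last inequality over $s\in[r]$ and using $\sum_s(|E_s|+|E_s'|)\le2\ve nr$ produces a level $s_0$ with $|E_{s_0}|,|E_{s_0}'|=O(\ve n)$ and $|V_{s_0}|\le8\ve n/\lambda$. I repair $\phi_{s_0}$ to an honest permutation $w\in P_n$ by redirecting, bijectively, the at most $|V_{s_0}|$ colliding inputs onto the $|V_{s_0}|$ missed targets; this alters $\phi_{s_0}$ on a set $F$ with $|F|\le|V_{s_0}|$. Since $w=\phi_{s_0}$ off $F$, the relation $\phi_{s_0}x=z\phi_{s_0}$ off $E_{s_0}$ transfers to $w(x(i))=z(w(i))$ off $E_{s_0}\cup F\cup x^{-1}(F)$, giving $d_H(wx,zw)\le(|E_{s_0}|+2|V_{s_0}|)/n$, and the twin relation $\phi_{s_0}z=x\phi_{s_0}$ gives the matching bound for $d_H(xw,wz)$. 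Using $\lambda\le4$ (the expansion defect is always $\le4|S|$) to absorb the $O(\ve)$ term into the $O(\ve/\lambda)$ term, both distances come out below $72\ve/\lambda$; the constant is deliberately wasteful, and the case $72\ve/\lambda\ge1$ is vacuous since $d_H\le1$.

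The main obstacle, and the crux of the argument, is the expansion step: extracting a bound on the non-bijectivity $|V_s|$ that is \emph{linear} in $1/\lambda$. This forces one to use the combinatorial ($\ell^1$) form of expansion through the layer-cake decomposition rather than an $L^2$/Cheeger estimate, which would only give $1/\lambda^2$; and it is exactly here that the two-sided quasi-invariance coming from $y^2=Id$ is indispensable, since the hypothesis controls the symmetric quantity $|S\triangle x(S)|+|S\triangle z(S)|$, whereas a one-sided intertwiner would only yield quasi-invariance under a single generator. The remaining work — the two index counts bounding $\sum_j|h_s(\cdot)-h_s|$, the complement/Markov bookkeeping near $Tr(p)=1/2$, and the final constant chase — is routine.
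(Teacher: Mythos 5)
Your proof is correct, but it takes a genuinely different route from the paper's. The paper keeps the full block structure: writing $y=(y(i,j))_{i,j\leqslant r}$ with blocks $y(i,j)\in M_n$ that are pieces of permutations, it averages to find a single block-row $i$ on which four error sums ($\sum_j\vp^1_{i,j},\sum_j\vp^2_{i,j},\sum_j\vp^1_{j,i},\sum_j\vp^2_{j,i}$) are simultaneously $<8\vp$, forms the diagonal projections $p_j=y(i,j)y(j,i)$ --- using $y^2=Id$ a \emph{second} time, to get $\sum_jp_j=Id_n$ --- shows each $p_S=\sum_{j\in S}p_j$ is $16\vp$-almost invariant under $x$ and $z$, and then runs a subset-sum pigeonhole through the expander hypothesis: every $p_S$ with $Tr(p_S)<1/2$ has $Tr(p_S)<32\vp/\lambda$, hence a single block has trace $>1-32\vp/\lambda$, and $w$ is obtained by completing the piece of permutation $y(i,j)$. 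You instead collapse the target level: your $\phi_s$ records only the $[n]$-coordinate, so you never establish the paper's stronger structural fact that one source fibre is nearly carried into one target level by a near-permutation; you show only that the collapsed map of some level is nearly bijective, which is weaker but suffices. In exchange your argument is leaner: one good level with two averaged constraints instead of four, $y^2=Id$ used only once (your multiplicity function $h_s$ has mean $1$ automatically, with no need for $\sum_jp_j=Id_n$), and the pigeonhole dichotomy replaced by the cleaner quantitative bound $\lambda|V_s|\leqslant 2(|E_s|+|E_s'|)$ via the layer-cake identity --- still linear in $1/\lambda$, as you correctly insist. Your index counts are sound ($|h_s(z(j))-h_s(j)|\leqslant|E_s\cap\phi_s^{-1}(j)|+|x(E_s)\cap\phi_s^{-1}(z(j))|$, summing to $2|E_s|$), the repair costs $2|V_{s_0}|$ rows as claimed, and with $\lambda\leqslant 4$ the constant lands comfortably under $72$. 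One hair: for $t\geqslant 1$ Markov gives only $|S_t|\leqslant n/2$, and at exact equality the (strict, $Tr(p)<1/2$) hypothesis is silent; passing to complements does not cure exact equality, though deleting a single point of $S_t$ does, at an additive $O(1/\lambda)$ cost that is negligible for large $n$. Note that the paper's own proof has the same untreated boundary (its final pigeonhole step fails if, say, two blocks each have trace exactly $1/2$), so your proof is no less rigorous than the original on this point.
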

\begin{proof}
As $M_{nr}\simeq M_r\otimes M_n$, elements in $M_{nr}$ can be viewed as functions from $\{1,\ldots r\}^2$ to $M_n$. Then $(x\otimes 1_r)(i,j)=\delta_i^jx$ and $[y(x\otimes 1_r)](i,j)=\sum_ky(i,k)(x\otimes 1_r)(k,j)=y(i,j)x$. Similarly $[(z\otimes 1_r)y](i,j)=z\cdot y(i,j)$.

Let $A,B\in P_{nr}$. We want to compare $d_H(A,B)$ to $\sum_{i,j=1}^rd_H(A(i,j),B(i,j))$. If $A$ and $B$ are different on a row it may happen that we count twice this error in $\sum_{i,j=1}^rd_H(A(i,j),B(i,j))$. It follows that:
\[2d_H(A,B)\geqslant\frac1r\sum_{i,j=1}^rd_H(A(i,j),B(i,j)).\]
By hypothesis $d_H(y(x\otimes 1_r),(z\otimes 1_r)y)<\vp$ and we can also deduce $d_H((x\otimes 1_r)y,y(z\otimes 1_r))<\vp$. Let $d_H(y(i,j)x,zy(i,j))=\vp_{i,j}^1$ and $d_H(xy(i,j),y(i,j)z)=\vp_{i,j}^2$. Then:
\begin{align*}
\frac1r\sum_{i,j=1}^r\vp_{i,j}^1&\leqslant 2d_H(y(x\otimes 1_r),(z\otimes 1_r)y)<2\vp;\\
\frac1r\sum_{i,j=1}^r\vp_{i,j}^2&\leqslant 2d_H((x\otimes 1_r)y,y(z\otimes 1_r))<2\vp.\\
\end{align*}
From these inequalities we can deduce the existence of an $i\in\{1,\ldots,r\}$ for which:
\[\sum_{j=1}^r\vp_{i,j}^1<8\vp\mbox{ , }\sum_{j=1}^r\vp_{i,j}^2<8\vp\mbox{ , }\sum_{j=1}^r\vp_{j,i}^1<8\vp\mbox{ and }\sum_{j=1}^r\vp_{j,i}^2<8\vp.\]
From now on $i$ is fixed with this property. Noting that $y(i,j)$ is a piece of permutation, we get:
\begin{align*}
d_H(y(i,j)y(j,i)x,xy(i,j)y(j,i))&\leqslant d_H(y(i,j)y(j,i)x,y(i,j)zy(j,i))+d_H(y(i,j)zy(j,i),xy(i,j)y(j,i))\\ &\leqslant d_H(y(j,i)x,zy(j,i))+d_H(y(i,j)z,xy(i,j))=\vp_{j,i}^1+\vp_{i,j}^2.
\end{align*}
Denote by $p_j=y(i,j)y(j,i)$ and note that $y^2=Id_{nr}$ implies $\sum_jp_j=Id_n$. The above inequality is $d_H(p_j,xp_jx^*)=d_H(p_jx,xp_j)\leqslant\vp_{j,i}^1+\vp_{i,j}^2$. Analogous, $d_H(p_j,zp_jz^*)\leqslant\vp_{i,j}^1+\vp_{j,i}^2$.

For $S\subset\{1,\ldots,r\}$ define $p_S=\sum_{j\in S}p_j$. Both $p_j$ and $xp_jx^*$ are elements in $D_n$ and this implies that $d_H(p_S,xp_Sx^*)\leqslant\sum_{j\in S}d_H(p_j,xp_jx^*)$. Using the above inequalities we get that for any subset $S$: 
\[d_H(p_S,xp_Sx^*)\leqslant\sum_{j\in S}\vp_{j,i}^1+\vp_{i,j}^2<16\vp.\]
The same statement is true for $d_H(p_S,zp_Sz^*)$.
Assume that $Tr(p_S)<1/2$. Then, by hypothesis, $\lambda Tr(p_S)<d_H(p_S,xp_Sx^*)+d_H(p_S,zp_Sz^*)$. Hence $Tr(p_S)<32\vp/\lambda$ in this case. As $\sum_{j=1}^rp_j=Id_n$ it follows that there exists $j$ such that $Tr(p_j)>1-32\vp/\lambda$.

Let $w\in P_n$ be such that $d_H(w,y(i,j))<32\vp/\lambda$. It is easy to see that $d_H(wx,zw)\leqslant 32\vp/\lambda+8\vp+32\vp/\lambda<72\vp/\lambda$. The same is true for $d_H(xw,wz)$.
\end{proof}

\begin{p}\label{main theorem}
Let $\Theta:\ff_2\to\Pi_{k\to\omega}P_{n_k}$ be a sofic representation of $\ff_2$. Choose $a_k,c_k\in P_{n_k}$ such that $\Theta(a)=\Pi_{k\to\omega}a_k$ and $\Theta(c)=\Pi_{k\to\omega}c_k$. Assume that:
\begin{enumerate}
\item $\{a_k,c_k\}_k$ is an expander, i.e. there exists $\lambda>0$ such that for any $k$ for any projection $p\in D_{n_k}$ with $Tr(p)<1/2$ we have $\lambda Tr(p)<d_H(p,a_kpa_k^*)+d_H(p,c_kpc_k^*)$;
\item there is no $w\in\Pi_{k\to\omega}P_{n_k}$ such that $w\Theta(a)w^{-1}=\Theta(c)$ and $w^2\Theta(a)=\Theta(a)w^2$.
\end{enumerate}
Then there is no $\Psi$ sofic representation of $G$ such that $R([\Psi])=[\Theta]$.
\end{p}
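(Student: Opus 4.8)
The plan is to argue by contradiction: assume there is a sofic representation $\Psi:G\to\Pi_{k\to\omega}P_{m_k}$ with $R([\Psi])=[\Theta]$, and manufacture from it exactly the element that hypothesis (2) forbids. The starting observation is purely group-theoretic. In $G$ we have $b^2=e$ and $bab^{-1}=bab=c$, so $Y:=\Psi(b)$ satisfies $Y^2=Id$ and $Y\Psi(a)Y^{-1}=\Psi(c)$. Hence the restriction $\Psi|_{\ff_2}$ always carries an involution conjugating the image of $a$ to the image of $c$; the whole task is to push this involution down to the dimensions $n_k$ of $\Theta$, which is where hypothesis (2) is stated.

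First I would unwind the equality $[\Psi|_{\ff_2}]_P=[\Theta]_P$. By the definition of $Sof(\ff_2,P^\omega)$ there are amplification factors $r_k,s_k$ with $n_kr_k=m_ks_k$ and a conjugator $p\in\Pi_{k\to\omega}P_{n_kr_k}$ satisfying $p(\Theta(a)\otimes 1_{r_k})p^{-1}=\Psi(a)\otimes 1_{s_k}$ and $p(\Theta(c)\otimes 1_{r_k})p^{-1}=\Psi(c)\otimes 1_{s_k}$. Setting $Y_0:=p^{-1}(Y\otimes 1_{s_k})p\in\Pi_{k\to\omega}P_{n_kr_k}$, the relations for $Y$ transport verbatim: $Y_0^2=Id$ and $Y_0(\Theta(a)\otimes 1_{r_k})Y_0^{-1}=\Theta(c)\otimes 1_{r_k}$. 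Next I would represent $Y_0$ by \emph{exact} involutions $y_k\in P_{n_kr_k}$: lifting $Y_0$ arbitrarily gives $y_k$ with $d_H(y_k^2,Id)\to_\omega 0$, and redefining $y_k$ to be the identity on the set of points lying in cycles of length $\geq 3$ (which has measure exactly $d_H(y_k^2,Id)$) produces a genuine involution at Hamming distance $\to_\omega 0$ from the original. Writing $\Theta(a)=\Pi_{k\to\omega}a_k$, $\Theta(c)=\Pi_{k\to\omega}c_k$ with $a_k,c_k\in P_{n_k}$, this perturbation keeps $d_H(y_k(a_k\otimes 1_{r_k}),(c_k\otimes 1_{r_k})y_k)=\vp_k$ with $\vp_k\to_\omega 0$.

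The key step is then to feed this into Lemma \ref{main lemma}. Hypothesis (1) is precisely the expander hypothesis of that lemma for each $k$, so for every $k$ in an $\omega$-conull set it yields $w_k\in P_{n_k}$ with $d_H(w_ka_k,c_kw_k)<72\vp_k/\lambda$ and $d_H(a_kw_k,w_kc_k)<72\vp_k/\lambda$. Putting $W:=\Pi_{k\to\omega}w_k\in\Pi_{k\to\omega}P_{n_k}$ and using $\vp_k\to_\omega 0$, the two estimates collapse to the exact identities $W\Theta(a)=\Theta(c)W$ and $\Theta(a)W=W\Theta(c)$. The first gives $W\Theta(a)W^{-1}=\Theta(c)$ and the second gives $W^{-1}\Theta(a)W=\Theta(c)$; comparing them yields $W^2\Theta(a)=\Theta(a)W^2$. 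Thus $W$ conjugates $\Theta(a)$ to $\Theta(c)$ while $W^2$ commutes with $\Theta(a)$ — exactly the element whose existence hypothesis (2) denies, a contradiction, so no such $\Psi$ exists.

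The conceptual crux is Lemma \ref{main lemma}, which I would use as a black box: it is what allows an involution living only after the $r_k$-fold amplification to be compressed back to the original dimensions. The step I expect to demand the most care is the transport in the second paragraph — correctly invoking the definition of equality in $Sof(\ff_2,P^\omega)$ (noting that the two sides need not be amplified by the same factor), and lifting $Y_0$ to pointwise-exact involutions while controlling the Hamming error, so that the input $\vp_k$ to Lemma \ref{main lemma} genuinely tends to $0$ along $\omega$.
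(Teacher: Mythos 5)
Your proposal is correct and takes essentially the same route as the paper's proof: extract the involution $\Psi(b)$, transport it through amplification and conjugation to an exact involution at the level of $P_{n_kr_k}$, feed it into Lemma \ref{main lemma} to compress it to permutations $w_k\in P_{n_k}$, and pass to the ultralimit to contradict hypothesis $(2)$. You merely make explicit the steps the paper leaves implicit — the conjugator $p$ realizing $[\Psi|_{\ff_2}]_P=[\Theta]_P$, the repair of approximate involutions to exact ones, and the algebra deriving $w^2\Theta(a)=\Theta(a)w^2$ from the two intertwining identities.
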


\begin{proof}
Assume that there exists a sofic representation $\Psi:G\to\Pi_{k\to\omega}P_{n_kr_k}$ such that $\Psi|_{\ff_2}=\Theta\otimes 1_{r_k}$. Let $y=\Psi(b)$. Then $y^2=Id$ and $y\cdot[\Theta\otimes 1_{r_k}](a)=[\Theta\otimes 1_{r_k}](c)\cdot y$.

Find $y_k\in P_{n_kr_k}$ such that $y_k^2=Id_{n_kr_k}$ and $y=\Pi_{k\to\omega}y_k$.  Then $d_H(y_k(a_k\otimes 1_{r_k}),(c_k\otimes 1_{r_k})y_k)\to0$ when ${k\to\omega}$. Use Lemma \ref{main lemma} to construct $w_k\in P_{n_k}$ such that  $d_H(w_ka_k,c_kw_k)\to_{k\to\omega}0$ and $d_H(a_kw_k,w_kc_k)\to_{k\to\omega}0$. Let $w=\Pi_{k\to\omega}w_k$. Then $w\Theta(a)=\Theta(c)w$ and $\Theta(a)w=w\Theta(c)$. This is in contradiction with condition $(2)$.
\end{proof}

\subsection{Construction}

We show that there exists sofic representations of $\ff_2$ satisfying conditions $(1)$ and $(2)$ from Proposition \ref{main theorem}. Fix a sequence $\{n_k\}_k$ increasing to infinity. For each $k$, arbitrary choose two $n_k$-cycles $(a_k,c_k)$ from the $[(n_k-1)!]^2$ pairs available. It is know that the sequence $(a_k,c_k)_k$ is generating a sofic representation of the free group with probability $1$. The theory of expander graphs tells us that also the first condition required in Proposition \ref{main theorem} is attain with probability $1$ (for small enough $\lambda$).  Some estimations will show that also condition $(2)$ is satisfied with probability $1$.

\subsubsection{First condition}We review here some basic facts about expanders.

\begin{nt}
For any two cycles $a,c\in P_n$ denote by  $G_{a,c}$ the 4-regular graph $(V,E)$, where $V=\{1,\ldots, n\}$ and $E=\{(i,a(i));(i,a^{-1}(i));(i,c(i));(i,c^{-1}(i)):i\in V\}$. These graphs may have multiple edges. 
\end{nt}

\begin{de}
For a graph $G=(V,E)$ the \emph{Cheeger constant} $h(G)$ is defined as:
\[h(G)=\min_{0<|S|\leqslant\frac n2}\frac{|\partial S|}{|S|},\]
where $S\subset V$ and $\partial S$ is the set of edges in $E$ with exactly one vertex in $S$.
\end{de}

The link between Cheeger constant and condition $(1)$ from Proposition \ref{main theorem} is clear. Choose $a,c\in P_n$ and a projection $p\in D_n$. Construct $G_{a,c}=(V,E)$. Let $S$ be the subset of $V$ corresponding to $p$, so that $Tr(p)=(1/n)|S|$. We can see that $d_H(p,apa^*)+d_H(p,cpc^*)=(1/n)|\partial S|$. It follows that condition $(1)$ is satisfyed iff $\{h(G_{a_k,c_k})\}_k$ is bonded away from $0$.

The Cheeger constant is strongly connected to the spectral gap. For a graph $G$ we shall denote by $\lambda_1(G)\geqslant\lambda_2(G)\geqslant\ldots\geqslant\lambda_n(G)$ the eigenvalues of the adjacency matrix. If $G$ is a 4-regular graph, as is always the case in this paper, then $\lambda_1(G)=4$. The second eigenvalue is of interest to us.

\begin{p}(Cheeger inequality) 
For any $d$-regular graph the following holds:
\[\frac12(d-\lambda_2(G))\leqslant h(G).\]
\end{p}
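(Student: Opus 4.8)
The plan is to prove this (the combinatorial-to-spectral, or ``easy'', direction of Cheeger's inequality) by exhibiting a single well-chosen test vector in the variational characterisation of $\lambda_2(G)$. Since $G$ is $d$-regular with adjacency matrix $A$, its graph Laplacian is $L=dI-A$, whose eigenvalues are exactly $0=d-\lambda_1(G)\leqslant d-\lambda_2(G)\leqslant\cdots\leqslant d-\lambda_n(G)$; in particular the spectral gap $d-\lambda_2(G)$ is the smallest positive eigenvalue of $L$. The all-ones vector $\mathbf 1$ spans the kernel of $L$ (this is the eigenvector for $\lambda_1(G)=d$), so by the Courant--Fischer min-max theorem
\[d-\lambda_2(G)=\min_{\substack{f\neq 0,\ f\perp\mathbf 1}}\frac{\langle f,Lf\rangle}{\langle f,f\rangle}=\min_{\substack{f\neq 0,\ f\perp\mathbf 1}}\frac{\sum_{(i,j)\in E}(f_i-f_j)^2}{\sum_{i\in V}f_i^2}.\]
It therefore suffices to produce, for an arbitrary $S\subset V$ with $0<|S|\leqslant n/2$, a vector $f\perp\mathbf 1$ whose Rayleigh quotient is at most $2|\partial S|/|S|$.

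The natural candidate is the centred indicator $f=\chi_S-\frac{|S|}{n}\mathbf 1$, which is orthogonal to $\mathbf 1$ by construction. First I would note that subtracting a constant leaves the differences unchanged, $f_i-f_j=(\chi_S)_i-(\chi_S)_j$, so this difference equals $\pm 1$ precisely on the edges with exactly one endpoint in $S$ and vanishes otherwise; hence the numerator is exactly $|\partial S|$ (edges counted with multiplicity, matching the definition of $\partial S$). A direct computation of the denominator gives $\sum_{i\in V}f_i^2=|S|\bigl(1-\tfrac{|S|}{n}\bigr)^2+(n-|S|)\bigl(\tfrac{|S|}{n}\bigr)^2=\frac{|S|(n-|S|)}{n}$. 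Substituting into the displayed formula yields $d-\lambda_2(G)\leqslant \frac{n|\partial S|}{|S|(n-|S|)}$.

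Finally I would invoke the size constraint $|S|\leqslant n/2$, which forces $n-|S|\geqslant n/2$ and hence $\frac{n}{n-|S|}\leqslant 2$. This turns the previous bound into $d-\lambda_2(G)\leqslant \frac{|\partial S|}{|S|}\cdot\frac{n}{n-|S|}\leqslant 2\,\frac{|\partial S|}{|S|}$. As $S$ ranges over all subsets of size at most $n/2$, minimising the right-hand side gives $d-\lambda_2(G)\leqslant 2\,h(G)$, which is the claimed inequality after dividing by $2$.

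There is no genuine obstacle here: this is the elementary half of Cheeger's inequality, and the only points demanding care are the bookkeeping in the boundary term (ensuring each crossing edge, with its multiplicity, contributes exactly once, consistent with the definition of $\partial S$) and the trivial but essential use of $|S|\leqslant n/2$ to extract the factor $2$. The genuinely difficult reverse direction --- bounding $h(G)$ from above by something of the form $\sqrt{2d\,(d-\lambda_2(G))}$ via a level-set/sweep argument --- is not needed for the expander estimates used in this paper.
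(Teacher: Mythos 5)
Your proof is correct. Note, though, that the paper gives no proof of this proposition at all: it is quoted as a classical fact (the standard reference, Lubotzky's book, appears in the bibliography as [Lu]), so there is no in-paper argument to compare yours against --- what you supply is the standard variational proof of the easy direction, and it fills the omission correctly. The details check out: $f=\chi_S-\frac{|S|}{n}\mathbf{1}$ is orthogonal to $\mathbf{1}$; since the centring constant cancels in differences, the Dirichlet form $\langle f,Lf\rangle$ counts exactly the crossing edges, giving numerator $|\partial S|$; the norm computation $\sum_i f_i^2=|S|\bigl(1-\frac{|S|}{n}\bigr)^2+(n-|S|)\bigl(\frac{|S|}{n}\bigr)^2=\frac{|S|(n-|S|)}{n}$ is right; and $|S|\leqslant n/2$ gives $\frac{n}{n-|S|}\leqslant 2$, hence $d-\lambda_2(G)\leqslant 2|\partial S|/|S|$, and minimising over $S$ yields the claim. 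The one point genuinely demanding care is the edge-counting convention, which you flag appropriately: the paper's displayed definition of $E$ for $G_{a,c}$ lists ordered pairs, but the identity $d_H(p,apa^*)+d_H(p,cpc^*)=(1/n)|\partial S|$ used later shows that $\partial S$ is meant as undirected edges counted with multiplicity, which is precisely the convention under which your numerator equals $|\partial S|$ (and in any case a consistent doubling of both the quadratic form and $|\partial S|$ leaves the bound unchanged). Your closing remark is also accurate: only this cheap direction is needed for the expander estimate (Proposition \ref{first condition}); the hard converse of Cheeger's inequality plays no role in the paper.
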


The following theorem is the missing piece of the puzzle.

\begin{te}(Theorem 1.2 of \cite{Fr})
For any $\vp>0$ there exists a constant $\mu_\vp$ such that for at least $(1-\mu_\vp/n)[(n-1)!]^2$ pairs of $n$-cycles $\{a,c\}$ we have for all $i>1$:
\[|\lambda_i(G_{a,c})|\leqslant 2\sqrt3+\vp.\]
\end{te}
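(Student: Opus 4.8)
The plan is to prove the bound by the moment (trace) method applied to the adjacency matrix $A=A(G_{a,c})$, the strategy underlying all second-eigenvalue estimates of this type. Since $G_{a,c}$ is $4$-regular, the all-ones vector $\mathbf 1$ is an eigenvector with eigenvalue $\lambda_1=4$, and the quantity to control is $\lambda=\max_{i>1}|\lambda_i(G_{a,c})|$, the spectral radius of $A$ on $\mathbf 1^\perp$. For any even exponent $2m$,
\[\lambda^{2m}\leqslant\sum_{i>1}\lambda_i^{2m}=Tr(A^{2m})-4^{2m},\]
so $\mathbb E[\lambda^{2m}]\leqslant\mathbb E[Tr(A^{2m})]-4^{2m}$, and Markov's inequality turns a good bound on $\mathbb E[Tr(A^{2m})]$ into the conclusion via $\Pr[\lambda>2\sqrt3+\vp]\leqslant\mathbb E[\lambda^{2m}]/(2\sqrt3+\vp)^{2m}$. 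The problem is thereby reduced to estimating the expected number of closed walks of length $2m$, since $Tr(A^{2m})$ counts exactly the closed walks $v_0\to v_1\to\cdots\to v_{2m}=v_0$ along edges of $G_{a,c}$.

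The core is computing $\mathbb E[Tr(A^{2m})]$ over the random choice of the two $n$-cycles $a,c$. First I would group closed walks by the isomorphism type of the subgraph $(V_w,E_w)$ they trace out, recording the numbers $t=|V_w|$ of distinct vertices and $s=|E_w|$ of distinct edges visited. Placing the $t$ vertices costs a factor of order $n^t$, while the probability that a prescribed set of $s$ edges all occur in $G_{a,c}$ is of order $n^{-s}$, each required adjacency $a(i)=j$ or $c(i)=j$ holding with probability $\approx 1/n$. Hence a walk shape contributes $\sim n^{t-s}$. Walks whose trace is a tree have $s=t-1$ and give the leading factor $n$; among these, the combinatorics of closed walks on the $4$-regular tree produce the growth rate $(2\sqrt3)^{2m}$. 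This is precisely Kesten's spectral radius $2\sqrt{d-1}$ of the regular tree with $d=4$, and it is the origin of the constant $2\sqrt3$.

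The main obstacle, and the reason this is a deep theorem rather than a routine estimate, is the control of the non-tree-like walks, whose trace contains a cycle so that $s\geqslant t$ and the naive count only gives $n^{t-s}\leqslant 1$. Individually each such ``tangled'' walk is rare, but there are enormously many of them, and near the threshold $2\sqrt3$ their aggregate contribution is exactly what must be beaten. The standard remedy is to let $m=m(n)$ tend to infinity slowly (typically of order $\log n$) and to carry out a delicate combinatorial bound on the number of closed walks of each fixed excess $s-t+1$, showing that the total contribution of positive-excess walks is $O(1/n)$ times the tree contribution; this is where Friedman's long argument (or Bordenave's later linearisation through the non-backtracking operator and the Ihara--Bass formula, which replaces arbitrary closed walks by the more tractable non-backtracking ones) does its work. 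The slack $+\vp$ and the failure probability $\mu_\vp/n$ emerge from optimising the tradeoff between $m$ and $n$ in this accounting.

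Finally, one point specific to the present model requires attention: $a$ and $c$ are uniform random $n$-cycles rather than uniform random permutations, so the probability that a prescribed family of adjacencies holds must be computed for the cycle distribution. I would verify that the joint law of a bounded number of images under a random $n$-cycle agrees with sampling without replacement up to a multiplicative factor $1+O(1/n)$, so that all the edge-probability estimates above transfer verbatim from the permutation model to the cycle model. This step is routine but must be carried out to justify applying the count to $G_{a,c}$.
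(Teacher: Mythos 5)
The statement you were asked to prove is quoted by the paper verbatim as Theorem 1.2 of \cite{Fr} --- Friedman's proof of Alon's second eigenvalue conjecture, a Memoir of over a hundred pages --- and the paper supplies no proof of its own, only the citation. So there is no internal argument to compare against, and your proposal should be judged as a standalone attempt. As such it has a genuine gap: the framework you set up (reduce to $\mathbb{E}[Tr(A^{2m})]$ via Markov, identify the Kesten radius $2\sqrt{d-1}=2\sqrt3$ from tree-like walks, transfer edge probabilities from the permutation model to the $n$-cycle model) is the correct and standard scaffolding, but the entire content of the theorem lives in the step you explicitly black-box as ``where Friedman's long argument does its work.'' A proof attempt that defers the decisive step to the very paper being cited is a sketch of a strategy, not a proof.

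Moreover, the specific remedy you propose would fail as stated. You suggest taking $m\asymp\log n$ and showing that positive-excess walks contribute $O(1/n)$ times the tree contribution to $\mathbb{E}[Tr(A^{2m})]$. This is false at the relevant scale: graphs containing a \emph{tangle} (a small subgraph whose local Perron eigenvalue $\rho$ exceeds $2\sqrt3+\vp$) occur with probability $\Theta(1/n)$ in this model, and each such graph contributes roughly $\rho^{2m}$ to the trace; once $m=C\log n$ with $C$ large enough to beat the factor $n$ in front of the tree term $n\,(2\sqrt3)^{2m}$, these rare graphs \emph{dominate} the expected trace, so the plain first-moment bound can never certify the threshold $2\sqrt{d-1}+\vp$. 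This is precisely why Friedman had to invent selective traces (and why Bordenave's later proof must first remove tangles before applying the trace method to the non-backtracking operator). Relatedly, your attribution of the failure probability $\mu_\vp/n$ to ``optimising the tradeoff between $m$ and $n$'' is off: the $\Theta(1/n)$ rate reflects the actual probability that a tangle is present --- graphs containing one genuinely violate the bound --- which is also why the $1-\mu_\vp/n$ form of the statement is essentially optimal rather than an artifact of Markov's inequality.
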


From now we fix $a\in P_n$ to be the cycle $(1,2,\ldots,n)$. As any two $n$-cycles are conjugate, we can deduce the following from the above theorem.

\begin{te}
There exists a constant $\mu_1$ such that for at least $(1-\mu_1/n)[(n-1)!]$ $n$-cycles $c$ we have for all $i>1$:
\[|\lambda_i(G_{a,c})|\leqslant 3.6\]
\end{te}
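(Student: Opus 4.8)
The plan is to choose $\vp$ small enough that the bound of the preceding theorem drops below $3.6$, and then to convert a statement about \emph{pairs} of $n$-cycles into one about a single cycle $c$ (with $a$ fixed) by exploiting that all $n$-cycles are conjugate. Concretely, since $2\sqrt3<3.6$, fix any $\vp$ with $2\sqrt3+\vp\leqslant 3.6$ (for instance $\vp=1/10$, so that $2\sqrt3+\vp<3.57$). By the cited Theorem 1.2 of \cite{Fr}, the set of ordered pairs $(a,c)$ of $n$-cycles for which $|\lambda_i(G_{a,c})|\leqslant 2\sqrt3+\vp$ for all $i>1$ has cardinality at least $(1-\mu_\vp/n)[(n-1)!]^2$. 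Call such a pair \emph{good}.

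The key observation is that goodness is invariant under simultaneous conjugation. Given $\sigma\in Sym(n)$, relabelling the vertex set $\{1,\ldots,n\}$ by $i\mapsto\sigma(i)$ sends the edge $(i,a(i))$ to $(\sigma(i),\sigma a\sigma^{-1}(\sigma(i)))$, and likewise for the edges coming from $a^{-1},c,c^{-1}$. Hence this relabelling is a graph isomorphism $G_{a,c}\cong G_{\sigma a\sigma^{-1},\sigma c\sigma^{-1}}$, so the two adjacency matrices are conjugate by a permutation matrix and share the same spectrum. In particular $(a,c)$ is good if and only if $(\sigma a\sigma^{-1},\sigma c\sigma^{-1})$ is good.

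I would then run an averaging argument. For an $n$-cycle $a$ let $N(a)$ be the number of $n$-cycles $c$ with $(a,c)$ good. If $a'$ is another $n$-cycle, pick $\sigma$ with $\sigma a\sigma^{-1}=a'$ (possible since any two $n$-cycles are conjugate); then $c\mapsto\sigma c\sigma^{-1}$ is a bijection of the set of $n$-cycles onto itself carrying good partners of $a$ to good partners of $a'$, so $N(a)=N(a')$. Thus $N(a)$ equals a constant $N$ independent of $a$. Summing over the $(n-1)!$ choices of $a$ recovers the total number of good pairs, so $(n-1)!\,N\geqslant(1-\mu_\vp/n)[(n-1)!]^2$, whence $N\geqslant(1-\mu_\vp/n)(n-1)!$. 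Taking $a=(1,2,\ldots,n)$ and setting $\mu_1=\mu_\vp$ yields the claim.

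There is no substantial obstacle here; the argument is essentially bookkeeping built on the conjugation symmetry of the construction $G_{a,c}$. The only point requiring a little care is verifying that vertex relabelling is a genuine isomorphism of the \emph{multigraphs} $G_{a,c}$, so that multiplicities of parallel edges are respected and the adjacency spectra truly coincide; once this is granted, the counting is immediate.
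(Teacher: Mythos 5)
Your proof is correct and is precisely the argument the paper intends: the paper deduces this theorem from Friedman's result with the single remark that any two $n$-cycles are conjugate, which is exactly the conjugation-invariance of the spectrum of $G_{a,c}$ plus the averaging over the $(n-1)!$ choices of $a$ that you spelled out (with $\vp$ chosen so that $2\sqrt3+\vp\leqslant 3.6$). Nothing further is needed.
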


Altogether, setting $\lambda=0.2$, we have the following result:

\begin{p}\label{first condition}
There exists a constant $\mu_1$ such that for at least $(1-\mu_1/n)[(n-1)!]$ $n$-cycles $c$ the following holds: for any projection $p\in D_n$ with $Tr(p)<1/2$ we have $\lambda Tr(p)<d_H(p,apa^*)+d_H(p,cpc^*)$.
\end{p}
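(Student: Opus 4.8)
The plan is to combine the two eigenvalue/Cheeger estimates already assembled in this subsection into a single quantitative statement about projections, translating freely between the combinatorial language of the graph $G_{a,c}$ and the operator language of $D_n$. The bridge is the identity established just before the Cheeger inequality: for $a,c\in P_n$ and a projection $p\in D_n$ corresponding to a vertex subset $S\subset V$, one has
\[
d_H(p,apa^*)+d_H(p,cpc^*)=\frac1n|\partial S|,
\qquad Tr(p)=\frac1n|S|.
\]
Thus the desired inequality $\lambda\,Tr(p)<d_H(p,apa^*)+d_H(p,cpc^*)$ is \emph{exactly} the edge-expansion inequality $\lambda|S|<|\partial S|$ for all $S$ with $|S|\le n/2$, i.e. it is the assertion $h(G_{a,c})>\lambda$ for the Cheeger constant.

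First I would fix $a=(1,2,\ldots,n)$ and invoke the preceding theorem to obtain a constant $\mu_1$ such that, for at least $(1-\mu_1/n)[(n-1)!]$ choices of the $n$-cycle $c$, the second eigenvalue satisfies $|\lambda_i(G_{a,c})|\leqslant 3.6$ for all $i>1$; in particular $\lambda_2(G_{a,c})\leqslant 3.6$. Next I would apply the Cheeger inequality with $d=4$: this gives
\[
h(G_{a,c})\geqslant\frac12\big(4-\lambda_2(G_{a,c})\big)\geqslant\frac12(4-3.6)=0.2=\lambda.
\]
Finally I would feed this back through the dictionary above: for every $S$ with $0<|S|\leqslant n/2$ we get $|\partial S|\geqslant h(G_{a,c})|S|\geqslant\lambda|S|$, which for a projection $p$ with $Tr(p)<1/2$ (so $|S|<n/2$, giving strict inequality) yields $\lambda\,Tr(p)<\tfrac1n|\partial S|=d_H(p,apa^*)+d_H(p,cpc^*)$, as required. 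The same $\mu_1$ works since the exceptional set is unchanged.

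The only genuine subtlety—rather than a true obstacle—is bookkeeping with the strict inequality and the boundary case $Tr(p)=1/2$. The Cheeger constant is defined as a minimum over $0<|S|\leqslant n/2$, so it directly controls sets of size up to $n/2$; for a projection with $Tr(p)<1/2$ strictly we have $|S|\leqslant(n-1)/2<n/2$, and the bound $h(G_{a,c})\geqslant 0.2$ together with $|\partial S|\geqslant h(G_{a,c})|S|\geqslant\lambda|S|$ gives a non-strict inequality that becomes strict after dividing by $n$ because $Tr(p)<1/2$ forces $|S|<n/2$ and the chosen constant $3.6$ leaves a small margin below $4-2\lambda=3.6$. (If one wanted to be scrupulous one could instead choose the eigenvalue bound to be strictly below $3.6$, or note that $\lambda\,Tr(p)\leqslant\tfrac1n|\partial S|$ with equality impossible for a nontrivial expander graph.) All the analytic content has been pushed into Friedman's theorem and the Cheeger inequality, both quoted above, so the proof is essentially a translation.
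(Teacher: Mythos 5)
Your proposal is correct and follows exactly the route the paper takes (the paper's ``proof'' is the single line ``Altogether, setting $\lambda=0.2$'' after the fixed-$a$ version of Friedman's theorem and the Cheeger inequality, using the same dictionary $d_H(p,apa^*)+d_H(p,cpc^*)=\frac1n|\partial S|$, $Tr(p)=\frac1n|S|$). Your extra care about the strict inequality is a point the paper glosses over entirely, and your suggested fix (taking the eigenvalue bound strictly below $3.6$, which Friedman's $2\sqrt3+\vp\approx 3.46+\vp$ easily permits) resolves it.
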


\subsubsection{Second condition}
Now we try to estimate the number of elements $w\in P_n$ so that $d_H(w^2a,aw^2)<\vp$. We stick to our choice $a=(1,\ldots, n)$.

\begin{p}\label{maximal number commuting}
Let $\vp>0$. Then the number of permutations $y\in P_n$ such that $d_H(ay,ya)<\vp$ is less than $n^{[\vp n]+1}$.
\end{p}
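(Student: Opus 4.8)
The plan is to convert the analytic hypothesis $d_H(ay,ya)<\vp$ into a combinatorial statement and then count. Since $a=(1,2,\dots,n)$ acts on $\{1,\dots,n\}=\zz/n\zz$ by $a(i)=i+1$, and since the matrix Hamming distance between two permutation matrices is exactly the normalised number of points on which the underlying permutations disagree, I would first compute $(ay)(i)=y(i)+1$ and $(ya)(i)=y(i+1)$ (all indices mod $n$). Hence $ay$ and $ya$ disagree precisely at those $i$ with $y(i+1)\neq y(i)+1$, so $d_H(ay,ya)<\vp$ says that the number $k$ of such \emph{break indices} satisfies $k<\vp n$, i.e. $k\le[\vp n]$.

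Next I would record the rigidity this forces. Put $g(i)=y(i)-i \bmod n$; the identity $g(i+1)-g(i)=y(i+1)-y(i)-1$ shows that $g$ is constant away from the break indices. Thus the cyclic set $\{1,\dots,n\}$ is partitioned into at most $[\vp n]$ arcs (the maximal runs between consecutive breaks), and on each arc $y$ is a single translation $i\mapsto i+t$. In other words $y$ is a ``piecewise rotation'' with at most $[\vp n]$ pieces, completely determined once one knows the break set together with translations of the domain arcs whose images tile $\{1,\dots,n\}$.

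The counting then runs as follows. Set $k_0=[\vp n]$; every admissible $y$ has its break set contained in some $B\subseteq\{1,\dots,n\}$ with $|B|=k_0$, and there are $\binom{n}{k_0}$ such $B$. For a fixed $B$ the arcs $A_1,\dots,A_{k_0}$ have fixed lengths, and an admissible $y$ is exactly a tiling of $\zz/n\zz$ by cyclic intervals of those lengths; such a tiling is pinned down by a cyclic ordering of the image intervals together with one global offset, giving at most $n\,(k_0-1)!$ possibilities. A union bound then yields
\[
|\{y\in P_n:d_H(ay,ya)<\vp\}|\ \le\ \binom{n}{k_0}\,n\,(k_0-1)!\ =\ \frac{n}{k_0}\cdot\frac{n!}{(n-k_0)!}\ \le\ \frac{n^{k_0+1}}{k_0}\ \le\ n^{k_0+1},
\]
and all these inequalities are strict once $k_0\ge2$ (the regime $k_0=[\vp n]\to\infty$ that is actually used), which gives the stated bound.

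The step I expect to be the crux is the estimate $n\,(k_0-1)!$ for the number of admissible translations with a fixed break set. The naive count ``choose, for each of the $k_0$ arcs, where its image starts'' gives $n^{k_0}$, and multiplied by $\binom{n}{k_0}$ this badly overshoots $n^{[\vp n]+1}$. The whole saving comes from the bijectivity of $y$: because the image arcs must tile the circle, once their cyclic order and a single offset are fixed the remaining data are forced, replacing $n^{k_0}$ by $n\,(k_0-1)!$. I would therefore be careful to justify this tiling count cleanly, and to note that counting break sets of size exactly $k_0$ via containment is harmless, since adjacent arcs carrying the same translation only decrease the true number of breaks (indeed no permutation has exactly one break, so the small-$k_0$ edge cases only improve the estimate).
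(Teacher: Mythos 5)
Your proof is correct and follows essentially the same route as the paper: both arguments identify the permutations with $d_H(ay,ya)<\vp$ as piecewise translations with at most $[\vp n]$ breakpoints and then bound the count by (choices of breakpoint set) $\times$ (arrangements of the resulting segments), yielding fewer than $n^{[\vp n]+1}$. Your cyclic bookkeeping (tilings counted by cyclic order plus offset, giving $n\,(k_0-1)!$ per break set, in place of the paper's cut-at-$1$ construction with $k!$ orderings of segments) is a minor variant that is if anything slightly more careful about the wrap-around and the degenerate small-$k_0$ cases.
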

\begin{proof}
We construct elements almost commuting with $a$ as follows: divide $\{1,\ldots,n\}$ into $k$ subsets composed of consecutive numbers, then permute these subsets.

Formally, choose $1=i_1<i_2<\ldots<i_k<i_{k+1}=n+1$.  Define $l_j=i_{j+1}-i_j$, $j=1,\ldots k$ (the length of the $j^{th}$ segment). Define $s:\{1,\ldots,n\}\to\{1,\ldots,k\}$, $s(v)$ is the unique number such that $i_{s(v)}\leqslant v<i_{s(v)+1}$.

Choose $r\in Sym(k)$ and let $t_{r(w)}=1+\sum_{r(j)<r(w)}l_{r(j)}$ (these are the new starting points of the segments to replace the numbers $i_j$). Define:
\[y(v)=t_{r(s(v))}+(v-i_{s(v)}) .\]

If $v\in\{1,\ldots,n\}\setminus\{i_1,\ldots,i_k\}$ then $s(v-1)=s(v)$. Then $y(v-1)=t_{r(s(v))}+((v-1)-i_{s(v)})=y(v)-1$. This can be rewritten as $ya(v-1)=ay(v-1)$, so $d_H(ay,ya)\leqslant k/n$.

All permutations $y$ for which $d_H(ay,ya)\leqslant (k-1)/n$ can be constructed this way. We need to consider $k-1=[\vp n]$. The number of permutations is less than $C_n^{k-1}\cdot k!=\frac{n!\cdot k}{(n-k+1)!}<n^k$.
\end{proof}

\begin{nt}
For $y\in Sym(n)$ denote by $S_2(y)$ the number of solutions in $Sym(n)$ of the equation $x^2=y$.
\end{nt}

We now compute $S_2(y)$ for some $y\in Sym(n)$. 

\begin{nt}
For $x\in Sym(n)$ denote by $c_x(i)$ the number of $x$-cycles of length $i$.
\end{nt}

From this definition we see that $c_x(1)$ is the number of fix points of $x$ and $\sum_iic_x(i)=n$.

Let $x(1)=t$. Then $x(t)=x^2(1)=y(1)$ and $x(y(1))=x^2(t)=y(t)$. Inductively, we get:
\[x(y^k(t))=y^{k+1}(1)\mbox{ and }x(y^k(1))=y^k(t)\mbox{ for any }k\geqslant 0.\]

There are two cases. Assume that $1$ and $t$ are in the same $y$-cycle, i.e. there exists $a\in\nz$ so that $t=y^a(1)$. It follows that $x(y^{k+a}(1))=y^{k+1}(1)$ and $x(y^k(1))=y^{k+a}(1)$. Combining the two equations we get $y^{2a-1}(1)=1$. It is easy to get a contradiction if $y^k(1)=1$ for $k<2a-1$, so $1$ must be in a $y$-cycle of length $2a-1$. All the values of $x$ on the elements composing this $y$-cycle are determined ($x(1)=y^a(1)$ and the rest will follow).

Assume now that $1$ and $t$ are in two distinct $y$-cycles. Then the two cycles must be of equal length. The values of $x$ on the elements composing the two cycles are determined once we chose the value of $x(1)$.

Let's determine the number of solutions of the equation $x^2=y$ when $y$ has only cycles of length $i$. If $i$ is even then $c_y(i)$ must be even, otherwise we have no solution. We have to group these cycles in pairs of two and there are $(c_y(i))!/[(c_y(i)/2)!2^{c_y(i)/2}]$ possibilities to do so. For each coupling we have $i^{c_y(i)/2}$ associated solutions. All in all the number of solutions in this case is:
\[S_2(y)=\frac{(c_y(i))!(i/2)^{c_y(i)/2}}{(c_y(i)/2)!}.\]
If $i$ is odd then we can group $2k$ cycles in $k$ pairs for $k=0,\ldots,[c_y(i)/2]$ (here $[t]$ is the largest integer smaller than $t$). The cycles left unpaired are not adding to the number of solutions, as the permutation $x$ is perfectly determined on the elements of those cycles. We reach the formula:
\[S_2(y)=\sum_{k=0}^{[c_y(i)/2]}\frac{(c_y(i))!i^k}{(c_y(i)-2k)!k!2^k}.\]
If $y$ is an arbitrary element of $Sym(n)$ then:
\[S_2(y)=\left[\Pi_i\frac{(c_y(2i))!(i)^{c_y(2i)/2}}{(c_y(2i)/2)!}\right]\left[\Pi_i\left(\sum_{k=0}^{[c_y(2i+1)/2]}\frac{(c_y(2i+1))!(2i+1)^k}{(c_y(2i+1)-2k)!k!2^k}\right)\right]\]
iff $c_y(2i)$ is even for each $i$, otherwise $S_2(y)=0$.

\begin{p}\label{maximal number y^2=x}
The maximal number of solutions of the equation $x^2=y$ is attain when $y$ is identity. In other words: \[S_2(Id)=max\{S_2(y):y\in Sym(n)\}.\]
\end{p}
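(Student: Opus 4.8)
The plan is to exhibit an explicit injection from the set of square roots of $y$ into the set of square roots of the identity on the same $n$ points. Since any injection $\{x\in Sym(n):x^2=y\}\hookrightarrow\{x\in Sym(n):x^2=\mathrm{Id}\}$ forces $S_2(y)\leqslant S_2(\mathrm{Id})$, constructing such a map proves the proposition. First I would record the structural description of a square root that is already implicit in the computations above: if $x^2=y$ then each cycle of $x$ of even length $2\ell$ squares to two $\ell$-cycles of $y$, while each odd cycle of $x$ squares to an $\ell$-cycle of $y$ of the same odd length. Hence, for each fixed length $\ell$, the $\ell$-cycles of $y$ are partitioned by $x$ into unmatched cycles (each the square of a unique $\ell$-cycle of $x$, which can occur only for odd $\ell$) and matched pairs (each pair being the two halves of a single $2\ell$-cycle of $x$). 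All pairing therefore happens within one length class, which is exactly why the formula for $S_2(y)$ factorises over cycle lengths.

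Given this, I define the associated involution $\iota(x)$ as follows. I view every cycle of $y$ as a block of points. On each unmatched block $\iota(x)$ fixes all points. On a matched pair arising from a $2\ell$-cycle $(z_0\,z_1\,\cdots\,z_{2\ell-1})$ of $x$, I let $\iota(x)$ be the fixed-point-free involution pairing $z_{2j}$ with $z_{2j+1}$ for all $j$, i.e. it transposes the elements that are $x$-consecutive. Carrying this out over all matched pairs and fixing the remaining points produces a genuine involution on all $n$ points, so $\iota(x)$ is a square root of $\mathrm{Id}$.

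The key step, and the main obstacle, is injectivity of $x\mapsto\iota(x)$. From $\iota(x)$ one reads off which blocks are fixed and which are transposed onto another block, so the matched/unmatched partition of the cycles of $y$ is recovered. On an unmatched $\ell$-block the restriction of $x$ is the unique $\ell$-cycle squaring to that cycle of $y$, hence determined by $y$ alone. On a matched pair, the transpositions of $\iota(x)$ record precisely the pairing $z_{2j}\leftrightarrow z_{2j+1}$, and together with the known action of $y=x^2$ inside each of the two blocks this reconstructs the full $2\ell$-cycle of $x$. Concretely, writing the two $y$-cycles as $(w_0\cdots w_{\ell-1})$ and $(w_0'\cdots w_{\ell-1}')$, a square root satisfies $w_j\mapsto w_{a+j}'$ for a single shift $a\in\{0,\dots,\ell-1\}$, and the $\ell$ distinct shifts yield the $\ell$ distinct matchings $\{w_j\leftrightarrow w_{a+j}'\}$; thus the interleaving label $a$ is recovered from $\iota(x)$. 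Hence $x$ is uniquely determined by $\iota(x)$, the map is injective, and $S_2(y)\leqslant S_2(\mathrm{Id})$, with equality for $y=\mathrm{Id}$.

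Finally I would remark that the same inequality follows at once from character theory, should one prefer it: by the Frobenius--Schur formula the number of square roots of $g$ equals $\sum_{\chi}\varepsilon_2(\chi)\,\chi(g)$, every irreducible representation of $Sym(n)$ is real so each indicator $\varepsilon_2(\chi)$ equals $1$, and then $S_2(y)=\sum_{\chi}\chi(y)\leqslant\sum_{\chi}|\chi(y)|\leqslant\sum_{\chi}\chi(e)=S_2(\mathrm{Id})$, using $|\chi(y)|\leqslant\chi(e)$. The combinatorial injection above is, however, self-contained and stays consistent with the explicit formulas for $S_2(y)$ derived beforehand, so I would present it as the main argument.
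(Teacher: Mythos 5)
Your proposal is correct, but it takes a genuinely different route from the paper's. The paper argues entirely through the explicit formulas computed just before the proposition: it first reduces to the case where $y$ has cycles of a single length $i$ (using $S_2(y)\leqslant\Pi_iS_2(\mathrm{Id}_{ic_y(i)})$, and the observation that this product counts only some of the square roots of $\mathrm{Id}_n$), then proves the termwise inequality
\[\frac{(c_y(i))!\,i^k}{(c_y(i)-2k)!\,k!\,2^k}\leqslant\frac{n!}{(n-2k)!\,k!\,2^k}\]
by comparing $c_y(i)(c_y(i)-1)\cdots(c_y(i)-2k+1)i^k$ with $n(n-1)\cdots(n-2k+1)$ via the intermediate product $n(n-i)(n-2i)\cdots$, exploiting $n=ic_y(i)$. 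Your injection $x\mapsto\iota(x)$ replaces all of this formula manipulation with a direct counting argument and does not even need the closed forms for $S_2(y)$; your Frobenius--Schur remark is also valid (all irreducible characters of $Sym(n)$ are real with indicator $1$, so $S_2(y)=\sum_\chi\chi(y)\leqslant\sum_\chi\chi(e)=S_2(\mathrm{Id})$) and is in fact more general, applying verbatim to any finite group whose Frobenius--Schur indicators are all $1$. One point needs repair in the injection as written: ``pair $z_{2j}$ with $z_{2j+1}$'' is not well defined, because rewriting the same $2\ell$-cycle of $x$ with the starting point shifted by one produces the complementary matching; concretely, the square roots with interleaving shifts $a$ and $a-1$ could then receive the same involution, destroying injectivity. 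You must fix a convention depending only on $y$ and the pair of blocks, e.g.\ order each matched pair of $y$-cycles by their smallest elements and always pair $w\leftrightarrow x(w)$ for $w$ in the first cycle; this is exactly the normalisation your injectivity paragraph implicitly uses when it writes the matching as $\{w_j\leftrightarrow w'_{a+j}\}$ and recovers $a$. With that convention fixed, the fixed-point set of $\iota(x)$ recovers the unmatched blocks (on each of which the unique odd square root is $c^{(\ell+1)/2}$), the transpositions recover the block pairing and the shift, and injectivity holds as you claim, so both of your arguments give clean alternatives to the paper's computation.
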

\begin{proof}
Assume first that $y$ is composed only of cycles of length $i$. So $n=ic_y(i)$. Then:
\[S_2(y)\leqslant\sum_{k=0}^{[c_y(i)/2]}\frac{(c_y(i))!i^k}{(c_y(i)-2k)!k!2^k}\]
As $S_2(Id)=\sum_{k=0}^{[n/2]}\frac{n!}{(n-2k)!k!2^k}$ it is enough to prove that:
\[\frac{(c_y(i))!i^k}{(c_y(i)-2k)!k!2^k}\leqslant\frac{n!}{(n-2k)!k!2^k}\mbox{ for }k=0,\ldots,[c_y(i)/2].\]
This inequality is equivalent to:
\[c_y(i)(c_y(i)-1)\ldots (c_y(i)-2k+1)i^k\leqslant n(n-1)\ldots (n-2k+1).\]
As $n=ic_y(i)$ we see that $n(n-i)(n-2i)\ldots (n-i(2k+1))$ is an intermediate value in the inequality above.

Let now $y$ be an arbitrary element in $Sym(n)$. By the first part of the proof $S_2(y)\leqslant\Pi_iS_2(Id_{ic_y(i)})$. The inequality $\Pi_iS_2(Id_{ic_y(i)})\leqslant S_2(Id_n)$ is clear as $\Pi_iS_2(Id_{ic_y(i)})$ counts only some of the solutions of the equation $x^2=Id_n$.
\end{proof}

\begin{nt}
Denote by $Bcyc(n,\vp)=\{c\in Sym(n):\exists w\in Sym(n), waw^{-1}=c, d_H(w^2a,aw^2)<\vp\}$.
\end{nt}

\begin{p}\label{second condition}
For small enough $\vp$ and large enough $n$ we have $Bcyc(n,\vp)<\frac1n\cdot(n-1)!$.
\end{p}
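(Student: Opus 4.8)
The plan is to bound $|Bcyc(n,\vp)|$ by a pure counting argument that feeds entirely into the two preceding propositions, and then to compare the resulting estimate against $\frac1n(n-1)!$ using Stirling's formula. First I would note that, by the very definition of $Bcyc(n,\vp)$, every $c\in Bcyc(n,\vp)$ has the form $c=waw^{-1}$ for some $w\in Sym(n)$ with $d_H(w^2a,aw^2)<\vp$. Thus $Bcyc(n,\vp)$ is exactly the image of the set $W=\{w\in Sym(n):d_H(w^2a,aw^2)<\vp\}$ under the map $w\mapsto waw^{-1}$, and so
\[|Bcyc(n,\vp)|\leqslant\#\{w\in Sym(n):d_H(w^2a,aw^2)<\vp\}.\]

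Next I would group the elements of $W$ according to the value of $y=w^2$. Rewriting the defining condition as $d_H(ya,ay)<\vp$, each admissible $w$ squares to some $y$ that almost commutes with $a$, and the number of $w$ with $w^2=y$ is by definition $S_2(y)$. Hence
\[|Bcyc(n,\vp)|\leqslant\sum_{y:\,d_H(ay,ya)<\vp}S_2(y)\leqslant\Big(\#\{y:d_H(ay,ya)<\vp\}\Big)\cdot\max_{y\in Sym(n)}S_2(y).\]
Proposition \ref{maximal number commuting} bounds the first factor by $n^{[\vp n]+1}$, and Proposition \ref{maximal number y^2=x} bounds the second by $S_2(Id)$, so that $|Bcyc(n,\vp)|<n^{[\vp n]+1}\,S_2(Id)$.

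It then remains to estimate $S_2(Id)$, the number of involutions (elements $x$ with $x^2=Id$) in $Sym(n)$, and to compare with $\frac1n(n-1)!$. The classical asymptotics of the involution count give $\log S_2(Id)=\tfrac n2\log n-\tfrac n2+O(\sqrt n)$, whence $\log\big(n^{[\vp n]+1}S_2(Id)\big)\leqslant(\vp+\tfrac12)\,n\log n-\tfrac n2+O(\sqrt n+\log n)$. On the other hand Stirling gives $\log\big(\tfrac1n(n-1)!\big)=n\log n-n+O(\log n)$. Subtracting, the desired inequality reduces to $(\vp-\tfrac12)\,n\log n<-\tfrac n2+O(\sqrt n+\log n)$, and for any fixed $\vp<\tfrac12$ this holds for all large $n$, since the left-hand side equals $-(\tfrac12-\vp)\,n\log n$ and $n\log n$ dominates $n$.

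The main obstacle is exactly the involution asymptotics for $S_2(Id)$: what is needed is the leading coefficient $\tfrac12$ in $\log S_2(Id)\sim\tfrac n2\log n$, since it is precisely this coefficient, added to the $\vp$ arising from the almost-commuting count, that must stay strictly below the coefficient $1$ of $n\log n$ in $\log\big(\tfrac1n(n-1)!\big)$. This is what pins the threshold at $\vp<\tfrac12$ and explains the phrase ``for small enough $\vp$''. To keep the argument self-contained I would extract this coefficient from a crude bound such as $S_2(Id)\leqslant(n+1)\max_k\frac{n!}{(n-2k)!\,k!\,2^k}$ combined with a Stirling estimate of the maximal term, rather than invoking the full asymptotic expansion of the involution numbers.
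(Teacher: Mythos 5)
Your proposal is correct and follows the paper's own proof essentially step for step: the reduction $|Bcyc(n,\vp)|<n^{[\vp n]+1}S_2(Id_n)$, obtained by combining Propositions \ref{maximal number commuting} and \ref{maximal number y^2=x} exactly as you do, is the paper's opening line, and the remainder in both cases is a verification that $S_2(Id_n)$ falls short of $n!$ by a factor of the form $n^{-cn}$, which swamps the expander-side loss $n^{[\vp n]}$. The only difference is quantitative: the paper uses the crude termwise bound $(n-2k)!\,k!>\left[\frac n3\right]!$ (threshold $\vp$ roughly below $1/3$), whereas your Stirling analysis of the involution count gives the sharper coefficient $\frac12$ in $\log S_2(Id_n)\sim\frac n2\log n$ and hence the threshold $\vp<\frac12$ --- both amply sufficient for ``small enough $\vp$''.
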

\begin{proof}
Combining Propositions \ref{maximal number commuting} and \ref{maximal number y^2=x}, we get that $Bcyc(n,\vp)< n^{[\vp n]+1}S_2(Id_n)$.

Clearly $(n-2k)!\cdot k!>[\frac n3]!$ for any $k=0,\ldots,[\frac n2]$. It follows that:
\[S_2(Id_n)=\sum_{k=0}^{[n/2]}\frac{n!}{(n-2k)!k!2^k}<\left[\frac n2\right]\cdot(n!)\cdot\left(\left[\frac n3\right]!\right)^{-1}.\]
It is easy to see that there exists a constant $t>0$ so that $[\frac n3]!>n^{tn}$ for large enough $n$ (one can use Stirlings's formula to deduce that $t$ can be chosen arbitrary close to $1/3$, but we don't need this). Altogether:
\[Bcyc(n,\vp)<n^{[\vp n]+1}\cdot n\cdot n^{-tn}\cdot (n!)=n^{[\vp n]+4-tn}\cdot\frac1n(n-1)!.\]
The conclusion can now be deduced.
\end{proof}

\begin{p}
Let $\{n_k\}_k$ be a sequence, $n_k\to\infty$ and $a_k=(1,\ldots,n_k)$. Choose $c_k\in P_{n_k}$ a random cycle from the $(n_k-1)!$ possibilities. Then $\Theta:\ff_2\to\Pi_{k\to\omega}P_{n_k}$ defined by $\Theta(a)=\Pi_{k\to\omega}a_k$ and $\Theta(c)=\Pi_{k\to\omega}c_k$ is a sofic representation satisfying the conditions in Proposition \ref{main theorem} with probability $1$.
\end{p}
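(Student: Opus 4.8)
The plan is to show that each of the three requirements hidden in the statement holds with probability one and then intersect them: that $\Theta$ is a genuine sofic representation of $\ff_2$, that condition $(1)$ of Proposition \ref{main theorem} holds, and that condition $(2)$ holds. Since a finite (or countable) intersection of probability-one events again has probability one, it suffices to treat the three separately. The first is the classical fact, already quoted in the text, that two independent uniformly random $n_k$-cycles generate an asymptotically free sofic representation of the free group with probability one; I would simply invoke it, recording that it amounts to saying that every nontrivial reduced word $g$ in $a,c$ has $o(n_k)$ fixed points for $\omega$-almost every $k$, i.e. $Tr(\Theta(g))=0$.

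For condition $(1)$ I would fix $\lambda=0.2$ and let $B_k$ be the event that the random cycle $c_k$ fails the expansion inequality of Proposition \ref{first condition}, so that $\mathbb{P}(B_k)\le\mu_1/n_k$. Assuming $\sum_k 1/n_k<\infty$, the Borel--Cantelli lemma shows that almost surely only finitely many $B_k$ occur; hence with probability one the inequality $\lambda\,Tr(p)<d_H(p,a_kp a_k^*)+d_H(p,c_kp c_k^*)$ holds for all large $k$, which (after discarding the finitely many bad indices) is precisely condition $(1)$. I expect this summability to be the main obstacle: Friedman's theorem only yields a failure probability of order $1/n$, so the Borel--Cantelli step genuinely needs a growth hypothesis such as $\sum_k 1/n_k<\infty$. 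This is harmless for the ultimate goal of non-surjectivity of $R$, where one may choose $n_k$ growing quickly (e.g. $n_k=2^k$), but it is the single delicate point, because for an arbitrary free ultrafilter and a slowly growing sequence the truth of condition $(1)$ depends on whether the random bad set lies in $\omega$, which is not a measurable event.

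For condition $(2)$ the key is to convert the non-existence of an element $w\in\Pi_{k\to\omega}P_{n_k}$ satisfying $w\Theta(a)w^{-1}=\Theta(c)$ and $w^2\Theta(a)=\Theta(a)w^2$ into a per-coordinate counting statement. If such a $w=\Pi_{k\to\omega}w_k$ existed, then for every fixed $\vp_0>0$ the set of indices $k$ with $d_H(w_ka_kw_k^{-1},c_k)<\vp_0$ and $d_H(w_k^2a_k,a_kw_k^2)<\vp_0$ would lie in $\omega$, hence be infinite; for each such $k$ the cycle $c_k$ would be within Hamming distance $\vp_0$ of a conjugate $w_ka_kw_k^{-1}$ whose conjugator has square almost commuting with $a_k$, i.e. $c_k$ lies in the approximate enlargement of $Bcyc(n_k,\vp_0)$. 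The contrapositive reduction I would use is therefore: \emph{if only finitely many $c_k$ lie in this approximate $Bcyc(n_k,\vp_0)$, then no such $w$ exists}. Bounding the number of such cycles by combining Propositions \ref{maximal number commuting} and \ref{maximal number y^2=x} exactly as in Proposition \ref{second condition}, and absorbing the extra $n^{O(\vp_0 n)}$ cycles near each conjugate, one still gets $\mathbb{P}(c_k\in Bcyc(n_k,\vp_0))\le n_k^{-cn_k}$ for a fixed small $\vp_0$ and some $c>0$. This decay is super-exponential, so the series converges under the same hypothesis and Borel--Cantelli gives condition $(2)$ almost surely; the only care needed here is the passage from the exact ultraproduct relations defining $w$ to the approximate per-coordinate membership in $Bcyc$.

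Finally I would assemble the three almost-sure statements and apply Proposition \ref{main theorem}: a representation $\Theta$ realising all of them is non-extendable, so in particular such representations exist and $R$ is not surjective. Both inputs, the expander estimate (Proposition \ref{first condition}) and the enumeration of near-conjugating permutations (Proposition \ref{second condition}), are already in hand, so beyond the two subtle points flagged above (the summability $\sum_k 1/n_k<\infty$ for condition $(1)$, and the exact-to-approximate reduction for condition $(2)$) the argument is a routine double application of Borel--Cantelli.
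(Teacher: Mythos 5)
Your proposal is correct and takes essentially the same route as the paper, whose entire proof is the single line ``Combine Propositions \ref{first condition} and \ref{second condition}''; the two points you flag --- passing from the exact ultraproduct relations to coordinatewise approximate ones (so that $c_k$ lies only in an $\vp$-neighbourhood of $Bcyc(n_k,\vp)$, absorbed by the superexponential bound $n^{[\vp n]+4-tn}$) and the Borel--Cantelli step for condition $(1)$ --- are exactly the details the paper leaves implicit. Your caveat on the latter is well taken: Friedman's theorem gives a per-coordinate failure probability only of order $1/n_k$, so the almost-sure statement genuinely requires either a growth hypothesis such as $\sum_k 1/n_k<\infty$ (harmless for the application, where $\{n_k\}$ may be chosen) or reading condition $(1)$ along $\omega$, while the $Bcyc$ estimate decays fast enough to make condition $(2)$ robust.
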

\begin{proof}
Combine Propositions \ref{first condition} and \ref{second condition}.
\end{proof}

\section*{Acknowledgements}

Special thanks to Lewis Bowen and Florin R\u adulescu for important discussions and references that I used for this paper.

LIVIU P\u AUNESCU, \emph{INSTITUTE of MATHEMATICS "S. Stoilow" of the ROMANIAN ACADEMY} email: liviu.paunescu@imar.ro

\end{document}